\newcommand{\bbX}{\ensuremath{\mathbb{X}}}
\newcommand{\Weil}{{\ensuremath{\mathbb W \mathrm{eil}}}}
\newcommand{\End}{\ensuremath{{\mathbb E \mathrm{nd}}}}
\newcommand{\Fun}{\ensuremath{\mathrm{Fun}}}
\newcommand{\DObj}{\ensuremath{\mathrm{DObj}}}
\newcommand{\DDObj}{\ensuremath{\mathrm{-DObj}}}
\newcommand{\DBun}{\ensuremath{\mathrm{DBun}}}
\newcommand{\DDBun}{\ensuremath{\mathrm{-DBun}}}
\newcommand{\Set}{\ensuremath{\mathrm{Set}}}
\newtheorem{theorem}{Theorem}[section]
\newtheorem{definition}[theorem]{Definition}
\newtheorem{remark}[theorem]{Remark}
\newtheorem{corollary}[theorem]{Corollary}
\newtheorem{example}[theorem]{Example}
\newtheorem{lemma}[theorem]{Lemma}
\newtheorem{notation}[theorem]{Notation}
\newtheorem{proposition}[theorem]{Proposition}
\newcommand{\WEIL}{\ensuremath{\mathbb W \mathrm{eil}}}
\newcommand{\weil}{{\ensuremath{\mathbb W \mathrm{eil}}}}
\newcommand{\Hom}{\mathrm{Hom}}
\newcommand{\Eval}{\ensuremath{\mathrm{Eval}}}
\newcommand{\Disc}{\ensuremath{\mathrm{Disc}}}
\newcommand{\nice}{\text{differential}}
\newcommand{\Nice}{Differential}
\newcommand{\DiffFun}{\ensuremath{\mathbb D \mathrm{iffFun}}}
\newcommand{\lin}{\ensuremath{\mathrm{lin}}}
\newcommand{\add}{\mathrm {add}}
\newcommand{\strong}{\mathrm {strong}}
\newcommand{\lax}{\mathrm {lax}}
\newcommand{\oplax}{\mathrm {oplax}}
\newcommand{\TANG}{\ensuremath{{\sf{TANG}}}}
\newcommand{\aACT}{\ensuremath{{\sf{-ACT}}}}
\newcommand{\Induce}{\ensuremath{\mathrm{Ind}}}
\newcommand{\Ind}{\ensuremath{\mathrm{Ind}}}
\newcommand{\SmMan}{\ensuremath{\mathbb S \mathrm{mMan}}}
\newcommand{\FinSet}{{\ensuremath{\mathbb F \mathrm{inSet}}}}
\newcommand{\Top}{{\ensuremath{\mathbb T \mathrm{op}}}}
\newcommand{\thatalpha}{distributor}
\newcommand{\eval}{\ensuremath{\mathrm{eval}}}
\newcommand{\bbI}{\mathbf 1}
\newcommand{\T}{T}
\newcommand{\Te}{T_\bullet}
\title{Differential bundles as functors from free modules}
\author{Florian Schwarz}
\begin{document}
\maketitle
\begin{abstract}
    This paper explores differential bundles in tangent categories, characterizing them as functors from a structure category. This is analogous to the actegory perspective of Garner and Leung, which we also use to describe the tangent categories of Rosický, Cockett and Cruttwell. We generalize the Garner-Leung equivalence between tangent categories and Weil algebra actegories to include lax functors and non-linear natural transformations.

    The main result of this paper, is that \nice{} functors between the structure category $\mathbb N^\bullet$ and a tangent category $\mathbb X$ are equivalent to differential bundles in $\mathbb X$. 

    We obtain this result by showing that evaluating a \nice{} functor on the generating object $\mathbb N^1$ of the structure category $\mathbb N^\bullet$ produces a differential bundle in a functorial way. Every differential bundle can be obtained this way. We show that obtaining such a functor from a bundle is a functorial construction.
    
    There are variations of these results for linear and additive morphisms of differential bundles.
\end{abstract}
\tableofcontents
\section{Introduction}
Geometry is one of the most beautiful areas of mathematics, and also one of the richest sources of intuition.  Many constructions in geometry (e.g. gluing local pieces to obtain a global object) have incarnations outside of geometry as well.  The abstract framework of \textbf{tangent categories} makes it possible to formally encode geometric structures in settings which may not traditionally be considered to be part of the field of geometry.
Tangent categories were first discovered by Ji\v{r}\'i Rosický in 1984 \cite{rosicky} and then rediscoved and reinterpreted by Robin Cockett and Geoff Cruttwell in 2012 \cite{Cockett2014DifferentialST}. In 2017 Poon Leung provided a purely algebraic characterization of tangent categories \cite{Leung2017}, an approach which was further developed by Richard Garner in \cite{Garner2018:embedding_theorem}.
A tangent category admits constructions from differential geometry. Smooth manifolds are an example, but many other categories also have a tangent structure. The main feature of a tangent category is an endofunctor which produces an analogue of the tangent bundle, along with natural transformations which correspond to the usual projection from the bundle to the base space.  There are several other important structured involved in the definition, these are laid out in Definition \ref{def:tangent_cat}.

In this paper, we consider one particular geometric construction in tangent categories: differential bundles.
Differential bundles in tangent categories were first discovered by Cockett and Cruttwell in 2018 \cite{cockett2016diffbundles}. They are a categorical generalization of vector bundles. Since many constructions in differential geometry are given by vector bundles (e.g. tangent and cotangent vectors form a vector bundle, de Rham cohomology is built from a vector bundle, and a Riemannian metric is a section of a certain vector bundle), these bundles are a key component of many constructions that we want to generalize in tangent categories.  As defined in \cite{cockett2016diffbundles}, a differential bundle is a pair of objects $E,M$ in a tangent category together with a projection map $q: E \to M$ satisfying certain axioms.  In particular the fiber, expressed as a certain pullback, has an additive structure that satisfies the conditions listed in Definition \ref{def:differential bundle}. A motivating example is the tangent bundle $T(M) \to M$ given as the endofunctor part of any tangent structure. 

However, the definition makes it difficult to characterize or classify differential bundles.  Unlike other algebraic theories such as internal monoids, the definition of a differential bundle is not given by functors from a certain structure category.  The goal of this paper is to provide exactly this kind of characterization.

To search for a structure category which characterizes differential bundles, we considered the aforementioned  characterization of tangent categories through the category of Weil algebras discovered by Leung \cite{Leung2017} and Garner \cite{Garner2018:embedding_theorem}.  This characterization utilizes actegories,  a generalization of group-actions  defined by Benabou in 1967 in \cite{benabou_intro_to_bicategories}.  Instead of a group acting on a set, an actegory completely describes the action of a monoidal category on another category.  Leung and Garner showed that there is an equivalence between the 2-category of tangent categories and the 2-category of certain Weil algebra actegories.  Later, Kristine Bauer, Matthew Burke and Michael Ching used this characterization of tangent categories via Weil algebra actegories to generalize tangent categories into tangent infinity categories \cite{BauerBurkeChing}.  Importantly, they presented the structure of the subcategory of differential objects in a tangent category (the objects with trivial tangent structure) using the category $\mathbb N^\bullet$ of (free, finite-dimensional) $\mathbb N$ modules. This category, with a slight change, can also be used to characterize differential bundles in tangent categories.

The category $\mathbb N^\bullet$ of $\mathbb N$ modules is a tangent category in which the tangent bundle of a $\mathbb N$-module $\mathbb N^k$ is just the product $\mathbb N^k \times \mathbb N^k$ and the tangent bundle projection is just the projection on the first component $\pi_0 : \mathbb N^k \times \mathbb N^k \to \mathbb N^k$. The key property of the tangent category $\mathbb N^\bullet$ is that $\mathbb N^1 \to \mathbb N^0$ is a differential bundle.  Since every object is a product of copies of $\mathbb N$, this implies that for every object $\mathbb N^k$, the map $\mathbb N^k \to \mathbb N^0$ is part of a differential bundle. We will continue to see that $\mathbb N^\bullet$ encodes the structure of differential bundles exactly.

In order to characterize differential bundles through functors, we define a special kind of functors preserving differential structures, called \nice{} functors. They are defined to be tangent functors that preserve certain pullbacks with certain naturality conditions. We will have two flavors of them, lax and strong differential functors, where the difference is that strong \nice{} functors are also required to preserve the terminal object. 

Proposition \ref{prop:preserve_differential_structure} will then show that \nice{} functors send differential objects (i.e. differential bundles over a terminal object) to differential bundles. This will immediately give us that every \nice{} functor with domain $\mathbb N^\bullet$ induces a differential bundle in the target category.

Conversely, Proposition \ref{prop:induce_on_objects} constructs a \nice{} functor from a differential bundle showing that every differential bundle comes from a \nice{} functor. This leads to Theorem \ref{thm:equivalence_bundle_categories}, an equivalence of categories between differential bundles and \nice{} functors.

This new characterization of differential bundles provides 1-categorical context for Michael Ching's and Kaya Arro's forthcoming work on differential bundles in tangent infinity categories and in particular on the differential bundles arising in Goodwillie functor calculus. They use $E_\infty$, an infinity category analogue of $\mathbb N^\bullet$, to define a differential bundle in a tangent infinity category.

We also hope that this characterization of differential bundles will be useful in further developing the connections between classical differential geometry and the theory of tangent categories.  Since differential bundles are the basic setup for connections \cite{cockett2017connections}, we hope we can use the alternate formulation of differential bundles in this paper to reformulate connections and gain insights into their structure.  In particular, we would like to examine the equivalence between horizontal and vertical connection in tangent categories in \cite{cockett2017connections}, with an eye towards extending this result.  Similarly, we hope to consider dynamical systems in tangent categories using our characterization of differential bundles.  Categorical dynamical systems were defined in \cite{jazmeyers_dynamical} and dynamical systems in tangent categories were defined in \cite{cockett2019diffeq}.  A key part of the structure in \cite{cockett2019diffeq} uses differential objects.  We hope that our characterization of differential objects can be used to make the relationship between \cite{jazmeyers_dynamical} and \cite{cockett2019diffeq} concrete.  See Section \ref{sec:concluding_remarks} for further details.

Finally, we hope that this perspective on differential bundles can help to find a correspondence between differential objects in a tangent category and its opposite.

\textbf{Acknowledgements:}
I would like to thank Michael Ching for suggesting the main idea of this paper. I would also like to thank Robin Cockett for going through the proofs in detail, finding the gaps and helping to fill them. Special thanks go to my supervisor Kristine Bauer for her support, discussing all the key proofs on the board in her office as well as for her support in structuring and communicating the result. I am thankful that my work was funded by the Alberta Innovates Graduate Student Scholarship.

\section{Tangent categories, differential bundles and differential objects}

In this section, we will review the definitions of tangent categories and differential bundles as stated by Cockett and Cruttwell in \cite{Cockett2014DifferentialST} and \cite{cockett2016diffbundles}. We will also define the appropriate notion of functors between tangent categories and natural transformations between such functors.

We introduce several important examples of tangent categories: $\SmMan$ and $\mathbb N^\bullet$ will be extremely important throughout the paper.  \SmMan{} is important because it recovers  classical differential geometry.  The category $\mathbb N^\bullet$ is important because it will be used to characterize differential bundles and objects in an arbitrary tangent category in Sections \ref{sec:induce} and \ref{sec:equivalence}.

\begin{notation}
As a convention, we will use throughout the paper that, given a pullback or, in particular, a product, the projections onto the first and second components will be called $\pi_0$ and $\pi_1$, respectively. 
More generally, the $i$-th projection out of a $n$-fold pullback will be $\pi_{i-1}$. 
The morphism to the pullback of $f$ and $g$ induced by the universal property will be denoted by $\langle f, g \rangle$:
\[\begin{tikzcd}
	Z \\
	& {A \times_BC} & A \\
	& C & B
	\arrow["{\langle f,g\rangle }"{description}, dashed, from=1-1, to=2-2]
	\arrow["f", curve={height=-6pt}, from=1-1, to=2-3]
	\arrow["g"', curve={height=6pt}, from=1-1, to=3-2]
	\arrow["{\pi_0}"', from=2-2, to=2-3]
	\arrow["{\pi_1}", from=2-2, to=3-2]
	\arrow["\lrcorner"{anchor=center, pos=0.125}, draw=none, from=2-2, to=3-3]
	\arrow[from=2-3, to=3-3]
	\arrow[from=3-2, to=3-3]
\end{tikzcd}\]
In the case that $Z = D \times_{B'}E $ is another pullback, we will denote (for $h:D \to A$ and $k: E \to C$ that induce the same map into $B$)
$$
h \times_B k := \langle h \circ \pi_0 , k \circ \pi_1 \rangle : D \times_{B'} E \to A \times_B C.
$$
The most relevant case happens when $B$ and $B'$ are the terminal object and the pullback is therefore a product: in this case, $f \times g := \langle f \circ \pi_0 , g \circ \pi_1 \rangle $.  Note that we are using applicable order for composition here: $f\circ \pi_0$ is the composite of the projection $\pi_0$ followed by $f$.  This is consistent with the notation used in \cite{BauerBurkeChing}, \cite{Garner2018:embedding_theorem}, \cite{Leung2017} and \cite{rosicky} and inconsistent with \cite{cockett2016diffbundles}, \cite{Cockett2014DifferentialST} and \cite{lanfranchi2025tangentdisplaymaps}.  In the case where $f=g$, we use a subscript to denote the pullback of $f$ along itself:

\[\begin{tikzcd}
	{A_2= A \times_BA} & A \\
	 A & B
	\arrow["{\pi_0}"', from=1-1, to=1-2]
	\arrow["{\pi_1}", from=1-1, to=2-1]
	\arrow["\lrcorner"{anchor=center, pos=0.125}, draw=none, from=1-1, to=2-2]
	\arrow["f", from=2-1, to=2-2]
	\arrow["f", from=1-2, to=2-2]
\end{tikzcd}\]
Likewise, $A_n$ denotes the $n$-fold pullback of $f$ along itself.    
\end{notation}

\subsection{Tangent categories}
This section aims to recall notions from \cite{Cockett2014DifferentialST}, namely tangent categories, lax/strong tangent functors between tangent categories and (linear) tangent transformations between tangent functors. Tangent categories were originally defined by Rosick\'y in \cite{rosicky} in a slightly different way; the formulation we use is the one from \cite{Cockett2014DifferentialST}.

\begin{definition}\label{def:additive_bundle}\cite[Definition 2.1]{Cockett2014DifferentialST}
Let $\mathbb X$ be a category and $A\in \mathbb X_0$ an object. An \textbf{additive bundle} $(X,A,+, 0,p)$ \textbf{over $A$} consists of the following data:
\begin{itemize}
    \item an object $X$ and a morphism $p: X\rightarrow A$ such that pullback powers $X_n$ of $p$ exist, and 
    \item morphisms $+ : X_2 \rightarrow X$ and $0: A \rightarrow X$. 
\end{itemize}

In addition, the morphisms must satisfy the following conditions:
\begin{itemize}
\item $p \circ += p \circ \pi_0 = p \circ \pi_1$ and $p \circ 0= 1_A$,
\item the morphism $+$ must be associative, commutative and unital.  
\end{itemize}
That is, the following associativity, commutativity and unitality diagrams 
\begin{center}
\begin{tikzpicture}
\path (0,1.5) node(a) {$X_3$}
(2.5,1.5) node (b) {$X_2$}
(0,0) node (c) {$X_2$}
(2.5,0) node (d) {$X$};
\draw [->] (a) -- node[above] {$1 \times_A +$} (b);
\draw [->] (a) -- node[left] {$+ \times_A 1$} (c);
\draw [->] (c) -- node[above] {$+$} (d);
\draw [->] (b) -- node[left] {$+$} (d);
\end{tikzpicture}
\begin{tikzpicture}
\path (0,1.5) node(a) {$X_2$}
(0,0) node (c) {$X_2$}
(2.5,0) node (d) {$X$};
\draw [->] (a) -- node[above] {$+$} (d);
\draw [->] (a) -- node[left] {$\langle \pi_1 , \pi_0 \rangle $} (c);
\draw [->] (c) -- node[above] {$+$} (d);
\end{tikzpicture}
\begin{tikzpicture}
\path (0,1.5) node(a) {$X$}
(0,0) node (c) {$X_2$}
(2.5,0) node (d) {$X$};
\draw [->] (a) -- node[above] {$1_X$} (d);
\draw [->] (a) -- node[left] {$\langle 0 \circ p , 1_X \rangle $} (c);
\draw [->] (c) -- node[above] {$+$} (d);
\end{tikzpicture}
\end{center}
must commute.
\end{definition}

A morphism of additive bundles consists of morphisms that commute with these structures, as explained in the following definition.
\begin{definition}\label{def:additive_bundle_morphism}\cite[Definition 2.2]{Cockett2014DifferentialST}
For two additive bundles $(X,A, p, + , 0)$ and $(Y,B,p',+',0')$ an \textbf{additive bundle morphism} $(f,g)$ is a pair of maps $f: X \rightarrow Y$ and $g: A \rightarrow B$ such that the following diagrams commute:
$$
\begin{tikzpicture}
\path (0,1.5) node(a) {$X$}
(2.5,1.5) node (b) {$Y$}
(0,0) node (c) {$A$}
(2.5,0) node (d) {$B$};
\draw [->] (a) -- node[above] {$f$} (b);
\draw [->] (a) -- node[left] {$p$} (c);
\draw [->] (c) -- node[above] {$g$} (d);
\draw [->] (b) -- node[left] {$p'$} (d);
\end{tikzpicture}
\begin{tikzpicture}
\path (0,1.5) node(a) {$X_2$}
(2.5,1.5) node (b) {$Y_2$}
(0,0) node (c) {$X$}
(2.5,0) node (d) {$Y$};
\draw [->] (a) -- node[above] {$\langle f \circ \pi_0 , f \circ \pi_1 \rangle$} (b);
\draw [->] (a) -- node[left] {$+$} (c);
\draw [->] (c) -- node[above] {$f$} (d);
\draw [->] (b) -- node[left] {$+'$} (d);
\end{tikzpicture}
\begin{tikzpicture}
\path (0,1.5) node(a) {$A$}
(2.5,1.5) node (b) {$B$}
(0,0) node (c) {$X$}
(2.5,0) node (d) {$Y$};
\draw [->] (a) -- node[above] {$g$} (b);
\draw [->] (a) -- node[left] {$0$} (c);
\draw [->] (c) -- node[above] {$f$} (d);
\draw [->] (b) -- node[left] {$0'$} (d);
\end{tikzpicture}
$$
\end{definition}
Additive bundles are a categorification of the basic structure underlying vector spaces.  This will be helpful in defining tangent categories, since each tangent space in the classical case is a vector space.  In the following definition, beware of the notation $T_2$ (which refers to a pullback) and $T^2$ (which refers to two applications of the functor $T$).  For any category $\mathbb X$, let $1:\mathbb X \to \mathbb X$ denote the identity functor.  

\begin{definition}\cite[Definition 2.3]{Cockett2014DifferentialST}
\label{def:tangent_cat}
A \textbf{tangent category} $(\mathbb X , T , p , 0,+, \ell , c)$ consists of a category $\mathbb X$ and the following structures:
\begin{itemize}
\item a functor $T: \mathbb X \rightarrow \mathbb X$, called the tangent functor with a natural transformation $p: T \rightarrow 1$ such that pullback powers of $p_M:TM\to M$ exist for all objects $M \in \mathbb X$ and $T$ preserves these pullback powers;
\item natural transformations $+: T_2 \rightarrow T$ and $0: 1 \rightarrow T$ making each $(TM, M, p_M, +_M, 0_M)$ 
an additive bundle;
\item a natural transformation $\ell: T \rightarrow T^2$, called the vertical lift, such that $(\ell_M, 0_M)$ is an additive bundle morphism from $(TM, M, p_M, +_M, 0_M)$ to $(T^2M, TM, Tp_M, T(+_M), T(0_M))$;
\item a natural transformation $c: T^2 \rightarrow T^2$ with $c^2 = 1$ and $lc = l$, such that $(c_M, 1)$ is an additive bundlem morphism from $(T^2M, TM, Tp_M, T(+_M), T(0_M))$ to $(T^2M, TM, p_{TM}, +_{TM}, 0_{TM})$.
\end{itemize}
The pullbacks $T^n T_k$ are denoted as \textbf{foundational pullbacks}.

In addition, $\ell$ and $c$ are required to be compatible with themselves and with each other, meaning that the diagrams
\[\begin{tikzcd}[column sep=2.25em]
	TM && {T^2M} & {T^3M} & {T^3M} & {T^3M} & {T^2M} & {T^3M} & {T^3M} \\
	{T^2M} && {T^3M} & {T^3M} & {T^3M} & {T^3M} & {T^2M} && {T^3M}
	\arrow["{\ell_{TM}}"', from=2-1, to=2-3]
	\arrow["\ell", from=1-1, to=1-3]
	\arrow["\ell"', from=1-1, to=2-1]
	\arrow["{T(\ell)}", from=1-3, to=2-3]
	\arrow["{T(c)}", from=1-4, to=1-5]
	\arrow["{c_{TM}}", from=1-5, to=1-6]
	\arrow["{c_{TM}}"', from=1-4, to=2-4]
	\arrow["{T(c)}"', from=2-4, to=2-5]
	\arrow["{c_{TM}}"', from=2-5, to=2-6]
	\arrow["{T(c)}", from=1-6, to=2-6]
	\arrow["{\ell_{TM}}", from=1-7, to=1-8]
	\arrow["{T(c)}", from=1-8, to=1-9]
	\arrow["c"', from=1-7, to=2-7]
	\arrow["{T(\ell)}"', from=2-7, to=2-9]
	\arrow["{c_{TM}}", from=1-9, to=2-9]
\end{tikzcd}\]
all commute.  Finally, we require that  
\[\begin{tikzcd}
	{T_2M} && {T^2M} \\
	M && TM
	\arrow["\nu", from=1-1, to=1-3]
	\arrow["{p_M \circ \pi_0}"', from=1-1, to=2-1]
	\arrow["\lrcorner"{anchor=center, pos=0.125}, draw=none, from=1-1, to=2-3]
	\arrow["{T(p_M)}", from=1-3, to=2-3]
	\arrow["{0_M}"', from=2-1, to=2-3]
\end{tikzcd}\]
is a pullback preserved by powers of $T$, where $\nu$ is a shorthand notation for $\nu = T(+) \circ \langle \ell_M \circ \pi_0 , 0_{TM} \circ \pi_1 \rangle $. This pullback is denoted as the \textbf{universality of the vertical lift}. The foundational pullbacks together with powers of $T$ applied to the universality of the vertical lift will be called the \textbf{tangent pullbacks}.
\end{definition}
While the last condition is formulated differently in Definition 2.3 of \cite{Cockett2014DifferentialST}, it is shown to be equivalent to the formulation here in Lemma 2.12 of \cite{Cockett2014DifferentialST}.

\begin{remark}
The last condition is called the universality of the vertical lift and it encodes the intuition that $T^2M$ relates to $T_2M$ in the same way that $TM$ relates to $M$. For example, in the category $\SmMan$ of manifolds and smooth maps (see Example \ref{ex:smooth_manifolds_vs}), each object has a dimenion.  
The vertical lift ensures that $\dim(T^2M)-\dim(T_2M) = \dim (TM) - \dim(M)$. In particular if the dimension of the tangent bundle is  a multiple by $n$ of the  dimension of the manifold, this becomes $n^2-(2n-1) = n-1$, which only is the case when $n=1$ or $n=2$. 
Thus the universality of the vertical lift in \SmMan{} enforces the idea that the dimension of the tangent bundle is either the same or twice  the dimension of the original manifold. We will explore the relationship between the vertical lift and the dimension of the tangent bundle in general tangent categories in a forthcoming paper.
\end{remark}

The structures involved in Definition \ref{def:tangent_cat} of tangent categories can be summarized as a diagram:
\[\begin{tikzcd}
	& {T_2(M)} \\
	M & {T(M)} & {T^2(M)}
	\arrow["{+}", from=1-2, to=2-2]
	\arrow["0", shift left, from=2-1, to=2-2]
	\arrow["p", shift left, from=2-2, to=2-1]
	\arrow["\ell", shift left, from=2-2, to=2-3]
	\arrow["{T(p)}", shift left, from=2-3, to=2-2]
	\arrow["c", from=2-3, to=2-3, loop, in=55, out=125, distance=10mm]
\end{tikzcd}\]
for every object $M$.

In general there can be many tangent structures for a given category $\mathbb X$. In particular any category with a nontrivial tangent structure $T\neq 1_\mathbb X$ has at least two tangent structures, $T$ and the trivial tangent structure given by the identity functor.
\begin{example}\label{ex:trivial_tangent_structure}\cite[Section 2.2]{Cockett2014DifferentialST}
Any category $\mathbb X$ with the identity functor and identity transformations for all of the natural transformations required by Defintion \ref{def:tangent_cat} give an example of a tangent category, $(\mathbb X, 1_\mathbb X, 1_{1_{\mathbb X}} , 1_{1_{\mathbb X}}, 1_{1_{\mathbb X}}, 1_{1_{\mathbb X}},1_{1_{\mathbb X}})$.
\end{example}

The main motivating example for the definition are smooth manifolds. This tangent structure is explained in  \cite[Section 2.2]{Cockett2014DifferentialST}.

\begin{example}\label{ex:smooth_manifolds_vs}\cite[Section 2.2]{Cockett2014DifferentialST}
Let \SmMan{} be the category with smooth manifolds as objects and smooth maps between them as morphisms. Then \SmMan{} is a tangent category where the tangent functor is given by forming the usual tangent bundle from differential geometry, $T(M):=TM$.  Elements of the tangent bundle are denoted by $(p, v_p)$, where $p$ is a point on the manifold and $v_p$ is a vector in the tangent space $T_pM$.
The projection $p: TM \to M$, the zero section $0: M \to TM$ and the addition of tangent vectors $+:T_2M \to TM$ are well-known from differential geometry, for example defined in \cite[Section 6.2]{michor2008topics}.

Elements of $T^2(M) = T(T(M))$ the tangent bundle of the tangent bundle are denoted by a 4-tuple $(p,v_p,u_p,w_{p,v_p})$ where $p \in M$, $v_p \in T_pM$, $u_p \in T_pM$ and $w_{p,v_p} \in T_{v_p}T_{p}M$, which implies $(u_p, w_{p,v_p}) \in T_{(p,v_p)}TM$.

The vertical lift $\ell : TM \to TTM$ is less frequently used in differential geometry, it is defined for example at the end of \cite[Section 6.12]{michor2008topics}.
It sends an element $(p,v)$ of the tangent bundle to $(p,0_p,0_p,v_{p,0_p})$. Here we use the canonical identification of the tangent space $T_{v_p}T_pM$ of the vector space $T_pM$ with itself.

The canonical flip sends an element $(p, u_p, v_p , w)$ of $TTM$ to the element  $(p, v_p, u_p , w)$ of $TTM$ which is possible as both $u_p$ and $v_p$ are in $T_pM$ and $T_{v_p}T_pM$ is canonically identified with $T_{u_p}T_pM$.

Every manifold of dimension $n$ is locally diffeomorphic to a subset of $\mathbb{R}^n$, see \cite{michor2008topics}, and using this equivalent formulation we can express points in the tangent bundle in local coordinates.  In local coordinates, on an open neighborhood $U \subset M$, the tangent bundle $T(M)$ is isomorphic to a product of the base manifold with a vector space
$$
T(U) \cong U \times \mathbb R^n
$$
where $n$ is the dimension of $M$. Here, we use $U$ to denote both the open neighborhood of $M$ and its diffeomorphic image in $\mathbb{R}^n$. For mandifolds $M_1, M_2$ with dimensions $n_1, n_2$ and a smooth map $f: M_1 \to M_2$, there are open neighborhoods $U_1 \subset M_1$ and $U_2 \subset M_2$ with $f(U_1) \subset U_2$ such that
$T(U_1) \cong U_1 \times \mathbb R^{n_1}$ and $T(U_2) \cong U_2 \times \mathbb R^{n_2}$.  The evaluation $T(f): T(M_1) \to T(M_2)$ of the tangent bundle functor on the morphism $f$ can also be expressed in these local coordinates as
\begin{align*}
T(f)|_{T(U_1)}:T(U_1 ) &\to T(U_2) \\ 
T(U_1) \cong U_1 \times \mathbb R^{n_1} \ni (x,v) &\mapsto \left(f(x), \frac{\partial f(y)}{\partial y}|_{y=x} \cdot v\right) \in U_2 \times \mathbb R^{n_2} \cong T(U_2).
\end{align*}
This formulation of $T(f)$ in terms of local coordinates is standard knowledge in differential geometry and can, for example, be found in \cite[Section 1.11]{michor2008topics}.
\end{example}
Unlike classical manifolds, tangent structures do not need to be related to the real numbers.
\begin{example}[Example 2.7 in \cite{ikonicoff2023cartesian}]\label{ex:Z_p}
Let $\mathbb F$ be a field. Then we define a category ${\mathbb P \mathrm{oly}}_{\mathbb F}$ whose objects are the finite-dimensional $\mathbb F$-vector spaces, $\mathbb F^k$. Morphisms $g:{\mathbb F}^n \to {\mathbb F}^k$ are $k$-tuples of of polynomials in $n$ variables
, for example,
$$
g: \mathbb F^3 \to \mathbb F^2 \qquad (x,y,z) \mapsto (x^2z-1, y^3 z+2x) .
$$
For a morphism $f$ let $J_f: \vec x \mapsto (\sum_j {\partial f^i \over \partial x_j} x_j)_{1\leq i \leq k}$ be the linear map given by the Jacobi matrix of $f$ (where ${\partial f^i \over \partial x_j} $ denotes the derivative of a polynomial defined by ${\partial x_i^k \over \partial x_i } = k \cdot x_i^{k-1}$).
The category ${\mathbb P \mathrm{oly}}_{\mathbb F}$  has a tangent structure, given by
\begin{align*}
T(\mathbb F^k) &:= \mathbb F^k \times \mathbb F^k\\
T(f) &:= f \times J_f\\
p_{\mathbb F^k} &:= \pi_0: \mathbb F^k \times \mathbb F^k \to \mathbb F^k\\
0_{\mathbb F^k} &:= \langle 1_{\mathbb F^k} , 0 \rangle : \mathbb F^k \to \mathbb F^k \times \mathbb F^k \\
+_{\mathbb F^k} &:= \langle \pi_0 ,\add \circ \langle \pi_1 , \pi_2 \rangle \rangle : \mathbb F^k \times \mathbb F^k \times \mathbb F^k \to \mathbb F^k \times \mathbb F^k
\\
\ell_{\mathbb F^k} &:= \langle \pi_0 , 0 , 0 , \pi_1 \rangle : \mathbb F^k \times \mathbb F^k \to \mathbb F^k \times \mathbb F^k \times \mathbb F^k \times \mathbb F^k\\
c_{\mathbb F^k} &:= \langle \pi_0 , \pi_2 , \pi_1 , \pi_3 \rangle : 
\mathbb F^k \times \mathbb F^k \times \mathbb F^k \times \mathbb F^k
\to \mathbb F^k \times \mathbb F^k \times \mathbb F^k \times \mathbb F^k,
\end{align*}
where $\add: \mathbb F^k \times \mathbb F^k \to \mathbb F^k$ is the addition of vectors.

\end{example}
This example is very useful because finite fields are a well-studied area of mathematics.  In Remark \ref{remark:linear_neq_additive} we use this to show that not every additive map is linear. 

A closely related example is the category $\mathbb S\mathrm{mooth}$ in which the objects are all object $\mathbb R^k$ and the morphisms are all smooth maps.  The same structure maps defined here (with $\mathbb F= \mathbb R$) give rise to a tangent structure on $\mathbb S\mathrm{mooth}$.

\begin{definition}\label{def:N_bullet}\cite[proof of Proposition 5.8]{BauerBurkeChing}
Let $\mathbb N^\bullet$ be the category with objects $\mathbb N^k$ for $k \in \mathbb N$ and morphisms from $\mathbb{N}^m$ to $\mathbb{N}^n$ given by matrices $f = (f_{ij})_{1\leq i\leq n ,1\leq j \leq m}$ where $f_{ij} \in\mathbb N$ for all $i$, $j$. The composition is given by matrix multiplication and the identity matrix is the identity.
\end{definition}

\begin{example}\label{ex:tan_structure_on_N_bullet}
The category $\mathbb N^\bullet$ has products and the product structure is given by $\mathbb N^k \times \mathbb N^l = \mathbb N^{k+l}$. There is an addition map $\mathrm{add}: \mathbb N^k \times \mathbb N^k \to \mathbb N^k$ given by $(\bbI_{k\times k}\vert \bbI_{k\times k})$, where $\bbI_{k\times k}$ is the $k \times k$ identity matrix. There is a tangent structure on $\mathbb N^\bullet$ with a tangent functor $D:\mathbb{N}^\bullet \to \mathbb{N}^\bullet$ and structures defined by:
\begin{align*}
D(A) &= A \times A \\
D(f) &= f \times f\\
p_{A} &:= \pi_0: A \times A \to A\\
0_{A} &:= \langle 1_{A} , 0 \rangle : A \to A \times A \\
+_{A} &:= \langle \pi_0 ,\add \circ \langle \pi_1 , \pi_2 \rangle \rangle :  A \times A \times A \to A \times A \\
\ell_{A} &:= \langle \pi_0 , 0 , 0 , \pi_1 \rangle : A \times A \to A \times A \times A \times A\\
c_{A} &:= \langle \pi_0 , \pi_2 , 
\pi_1 , \pi_3 \rangle : 
A \times A \times A \times A
\to A \times A \times A \times A
\end{align*}

Note that in these definitions, $D^2(A)=A\times A\times A\times A$ and $D_2(A)=A\times A \times A$.
It was proven in Proposition 5.18 of \cite{BauerBurkeChing} that $(\mathbb N^\bullet,D)$ is a tangent category.    
\end{example}

We now turn to establishing a way to compare tangent categories by defining tangent functors.  The main  issue in defining tangent functors is how strongly a functor between tangent categories may preserve the tangent bundle functor.  Whether or not the tangent bundle functor is preserved up to natural isomorphism or only preserved up to a natural transformation gives us two different possible defintions for tangent functors, as in the next definition.

\begin{definition}\label{def:lax_tangent_functor}\cite[Definition 2.7]{Cockett2014DifferentialST}
Suppose that $(\mathbb X , T , p , 0, + ,\ell ,c)$ and $(\mathbb X' , T' , p' , 0', +' ,\ell' ,c')$ are tangent categories.
\begin{enumerate}
\item A \textbf{lax tangent functor} (called a morphism of tangent categories in \cite[Definition 2.7]{Cockett2014DifferentialST}) $(F, \alpha): (\mathbb X,T) \to (\mathbb X',T')$ is a functor $F: \mathbb X \to \mathbb X'$ and a natural transformation $\alpha: F \circ T \Rightarrow T' \circ F$, called the \textbf{\thatalpha}, such that the following diagrams commute:

\[\begin{tikzcd}[column sep=large]
	{F \circ T} & {T' \circ F} & F && {F \circ T_2} & {T'_2\circ F} \\
	& F & {F \circ T} & {T' \circ F} & {F \circ T} & {T' \circ F} \\
	& {F \circ T} & {T' \circ F} & {F \circ T^2} & {T'^2 \circ F} \\
	& {F \circ T^2} & {T'^2 \circ F} & {F \circ T^2} & {T'^2 \circ F}
	\arrow["\alpha", from=1-1, to=1-2]
	\arrow["{p'_F}", from=1-2, to=2-2]
	\arrow["{F(p)}"', from=1-1, to=2-2]
	\arrow["{F(0)}"', from=1-3, to=2-3]
	\arrow["{0'_F}", from=1-3, to=2-4]
	\arrow["\alpha"', from=2-3, to=2-4]
	\arrow["{\alpha_2}", from=1-5, to=1-6]
	\arrow["\alpha"', from=2-5, to=2-6]
	\arrow["{F(+)}"', from=1-5, to=2-5]
	\arrow["{+'_F}", from=1-6, to=2-6]
	\arrow["\alpha", from=3-2, to=3-3]
	\arrow["{F(\ell)}"', from=3-2, to=4-2]
	\arrow["{\ell_F'}", from=3-3, to=4-3]
	\arrow["{\alpha_T \circ T(\alpha)}"', from=4-2, to=4-3]
	\arrow["{F(c)}"', from=3-4, to=4-4]
	\arrow["{c'_F}"', from=3-5, to=4-5]
	\arrow["{\alpha_T \circ T(\alpha)}"', from=4-4, to=4-5]
	\arrow["{\alpha_T \circ T(\alpha)}", from=3-4, to=3-5]
\end{tikzcd}\]
\item A \textbf{strong tangent functor} is a lax tangent functor such that the {\thatalpha} $\alpha$ is a natural isomorphism and $F$ preserves the tangent pullbacks of Definition \ref{def:tangent_cat}, i.e. the foundational pullbacks and the universality of the vertical lift.
\end{enumerate}
\end{definition}
One example of a tangent functor that will be an example of our classification of differential bundles in Theorem \ref{thm:equivalence_bundle_categories} is given by the following functor.
\begin{example}\label{ex:functor N bullet to smooth} There is a tangent functor
$F:{\mathbb N}^\bullet \to {\mathbb S \mathrm{mooth}}$ given by sending an object $\mathbb{N}^k$ to $\mathbb{R}^k$ and each $\mathbb{N}$-linear map $f$ represented by a matrix $(f_{ij})$ to the matrix $(f_{ij})$, considered as an $\mathbb{R}$-linear map using he embedding of $\mathbb N$ into $\mathbb R$.  This linear map is smooth, and it is apparent that this assignment preserves composition and the identity.  The natural isomorphism $\alpha: F \circ T_{\mathbb N^\bullet} \to T_{\mathbb S \mathrm{mooth}} \circ F$ is given by $$\alpha_{\mathbb N^k} := 1_{\mathbb R^{2k}}: F(T_{\mathbb N^\bullet}(\mathbb N^k))= \mathbb R^{2k} \to \mathbb R^{2k} = T_{\mathbb S \mathrm{mooth}}(F(\mathbb N^k)).$$
    
\end{example}

\begin{definition}\label{def:tangent_trafo}
A \textbf{tangent transformation} between (lax/strong) tangent functors $(F,\alpha)\Rightarrow (F', \alpha')$ is a natural transformation of underlying functors $\varphi: F \Rightarrow F'$. It is called a \textbf{linear tangent transformation} if 
\begin{equation} \label{diagram:linearity_tangent_trafor}
\begin{tikzcd}
	{F \circ T} && {F' \circ T} \\
	{T' \circ F} && {T' \circ F'}
	\arrow["{\varphi_T}", from=1-1, to=1-3]
	\arrow["{T'(\varphi)}", from=2-1, to=2-3]
	\arrow["\alpha'", from=1-3, to=2-3]
	\arrow["\alpha", from=1-1, to=2-1]
\end{tikzcd}\end{equation} 
commutes.
\end{definition}

\subsection{Differential bundles}
Many constructions relevant in differential geometry and mathematical physics, such as
metric tensors, curvature, symplectic structures, spinors and differential forms are given by sections of certain vector bundles.  The study of vector bundles is also the central theme of K-theory.
Thus a central object of study in tangent categories are differential bundles, the categorical generalization of the notion of vector bundles from differential geometry. 
We start by recalling the definition of a differential bundles from \cite{cockett2016diffbundles}:
\begin{definition}\label{def:differential bundle}\cite[Definition 2.3]{cockett2016diffbundles}
Given a tangent category $(\mathbb X,T)$, a \textbf{differential bundle} \\ $(E,M,q,\zeta, \sigma, \lambda)$ consists of 
\begin{enumerate}
    \item an object $E$ (the total space),
    \item an object $M$ (the base space),
    \item a morphism $q:E \to M$ (the projection), admitting finite pullback powers $\{E_n\}_{n \in \mathbb N}$ over itself that are preserved by powers of the tangent functor $T^k$,
    \item a morphism $\zeta: M \to E$ (the zero section) and a morphism $\sigma: E_2 \to E$ (fiberwise addition) such that $(q,\zeta,\sigma)$ is an additive bundle, and
    \item a morphism $\lambda: E \to T(E)$ (the lift) such that $$(\lambda, 0): (E,M,q, \sigma,\zeta) \to (T(E),T(M),T(q)), T(\sigma), T(\zeta))$$ and $$(\lambda, \zeta): (E,M,q, \sigma,\zeta) \to (T(E),E,p, + , 0)$$ are additive bundle morphisms.
\end{enumerate}
In addition the diagram
\begin{equation}
\begin{tikzcd}
	{E_2} &&& TE \\
	M &&& TM
	\arrow["{q\circ \pi_0}"', from=1-1, to=2-1]
	\arrow["0", from=2-1, to=2-4]
	\arrow["{T(q)}", from=1-4, to=2-4]
	\arrow["{T(\sigma) \circ \langle \lambda \circ \pi_0 , 0 \circ \pi_1\rangle }", from=1-1, to=1-4]
	\arrow["\lrcorner"{anchor=center, pos=0.125}, draw=none, from=1-1, to=2-4]
\end{tikzcd}\label{diagram:universality}
\end{equation}
is a pullback and the morphisms $\ell_E \circ \lambda , T(\lambda) \circ \lambda : E \to T(T(E))$ are equal.
In the special case when $M$ is the terminal object, the differential bundle  is called a \textbf{differential object}. 

\end{definition}
The maps $\zeta$, $\sigma$ and $\lambda$ will be suppressed from the notation when they are not needed. Then the differential bundle $(E,M,q,\zeta, \sigma, \lambda)$ will be denoted by $E \xrightarrow{q} M$ or $(E,M,q)$. Due to the symmetry of the addition, it is equivalent to ask for Diagram \ref{diagram:universality} or
\[\begin{tikzcd}
	{E_2} &&& TE \\
	M &&& TM
	\arrow["{q\circ \pi_0}"', from=1-1, to=2-1]
	\arrow["0", from=2-1, to=2-4]
	\arrow["{T(q)}", from=1-4, to=2-4]
	\arrow["{T(\sigma) \circ \langle 0 \circ \pi_0 , \lambda \circ \pi_1\rangle }", from=1-1, to=1-4]
	\arrow["\lrcorner"{anchor=center, pos=0.125}, draw=none, from=1-1, to=2-4]
\end{tikzcd}
\]
to be a pullback. Since they are equivalent, we use both conditions, depending on what is more convenient in a given situation.

\begin{remark}\label{rem:DiffObj}
    For a differential object $(V,*,q,\zeta,\sigma,\lambda)$, the morphism $q:V \to *$ to the terminal object must be the unique morphism $!:V \to *$ to the terminal object.\footnote{It is also true that the universality of the vertical lift shows that $\sigma \circ (\lambda \times 0) : V^2 \to T(V)$ is an isomorphism.  We will not need this, but it is an important property of differential objects.} Thus we denote a differential object by $(V,\sigma, \zeta, \lambda)$. If the morphisms $\sigma, \zeta, \lambda$ are not needed they are suppressed and the differential object is just denoted by $V$.

\end{remark}

\begin{example}\label{ex:trivial_bundle}
A Cartesian tangent category is a tangent category in which finite products exist and the product projections are compatible with the tangent structure, see \cite[Section 2.4]{Cockett2014DifferentialST}.
    Given a differential object $(V,\sigma,\zeta,\lambda)$ and an object $M$ in a Cartesian tangent category, there is a differential bundle with:
    \begin{itemize}
        \item total space $M\times V$,
        \item base space $M$,
        \item projection $\pi_0 : M \times V \to M$
        \item addition $1 \times \sigma: M \times V \times V \to M \times V$
        \item zero section $1 \times \zeta : M \cong M \times * \to M \times V$
        \item vertical lift $0_M \times \lambda : M \times V \to T(M) \times T(V) \cong T(M \times V)$
    \end{itemize}
    We call bundles of this form trivial differential bundles.
\end{example}

Classically defined vector bundles from differential geometry were the motivating example of differential bundles.  Theorem 2.5.1 of \cite{McAdam} shows that differential bundles in \SmMan{} are exactly vector bundles. In particular, differential objects in \SmMan{} are simply vector spaces. The subcategory Smooth in \SmMan{} is the full subcategory consisting of the differential objects, i.e. the vector spaces. 

\begin{example}\label{ex:N_bullet_differential}~
\begin{enumerate}[label = (\alph*)]
    \item Every object in $\mathbb N^\bullet$ is a differential object, i.e. every $\mathbb N^k$ is a differential bundle over $\mathbb N^0$ with the structure given by the following maps:
\begin{align*}
\zeta &:= 0 : \mathbb N^0 \to \mathbb N^k \textrm{, the inclusion of zero in }\mathbb N^k;\\
\sigma &:= \text{add} : \mathbb N^{2k} \to \mathbb N^k \textrm{, the usual addition map;}\\
\lambda &:= \langle 0,1_{\mathbb N^k} \rangle : \mathbb N^k \to \mathbb N^{2k}.
\end{align*}
It is an easy exercise to check that these maps fulfill all the conditions required for a differential bundle.
\item Replacing $\mathbb N$ with $\mathbb R$ in the previous example shows that every object of $\mathbb S\mathrm{mooth}$ is a differential object. This is because the tangent functor $(F, \alpha): \mathbb N^\bullet \to \mathbb S \mathrm{mooth}$ from Example \ref{ex:functor N bullet to smooth} preserves the differential object structure. 
Given a differential object in $\mathbb N^\bullet$ with structure maps $\zeta,\sigma$ and $\lambda$, the maps $F(\zeta), F(\sigma)$ and $F(\lambda)$ are the structure maps of a differential object because $F$ leaves these maps essentially unchanged.  Every object of $\mathbb S\mathrm{mooth}$ is of this type because $F$ is essentially surjective.  This, plus the fact that $\alpha$ is the identity transformation in this case, make it trivial to verify that the resulting structures fulfill Definition \ref{def:differential bundle}.
\item Replacing $\mathbb N$ with any field $\mathbb F$ will show that every objection of $\mathbb P \mathrm{oly}_\mathbb F$ is a differential object.  This works essentially the same as the argument in (b).
\end{enumerate}

\end{example}
The differential object structure on $\mathbb N^1$ is a key property of the category $\mathbb N^\bullet$ that will allow us to classify differential bundles in Theorem \ref{thm:equivalence_bundle_categories}.

\begin{definition}\label{def:differential_bundle_morphisms}\cite[Definition 2.3]{cockett2016diffbundles}
A \textbf{morphism of differential bundles} $$(f, g) : (E,M,q,\sigma, \zeta, \lambda) \to (E',M',q',\sigma', \zeta', \lambda')$$
is a pair of maps $f: E \to E', g: M \to M'$ such that the diagram
\[\begin{tikzcd}
	E & {E'} \\
	M & {M'}
	\arrow["g"', from=2-1, to=2-2]
	\arrow["{q'}", from=1-2, to=2-2]
	\arrow["q"', from=1-1, to=2-1]
	\arrow["f", from=1-1, to=1-2]
\end{tikzcd}\]
commutes.
A morphism of differential bundles is called \textbf{linear} if 
\begin{equation}\label{eqn:linear}
\begin{tikzcd}
	E & {E'} \\
	TE & {TE'}
	\arrow["f", from=1-1, to=1-2]
	\arrow["{T(f)}"', from=2-1, to=2-2]
	\arrow["{\lambda'}", from=1-2, to=2-2]
	\arrow["\lambda"', from=1-1, to=2-1]
\end{tikzcd} 
\end{equation}
commutes and it is called \textbf{additive} if the diagrams
\begin{equation}\label{eqn:additive}
\begin{tikzcd}
	{E_2} & {E'_2} & M & {M'} \\
	E & {E'} & E & {E'}
	\arrow["g", from=1-3, to=1-4]
	\arrow["f"', from=2-3, to=2-4]
	\arrow["\zeta"', from=1-3, to=2-3]
	\arrow["{\zeta'}", from=1-4, to=2-4]
	\arrow["{f_2}", from=1-1, to=1-2]
	\arrow["{\sigma'}", from=1-2, to=2-2]
	\arrow["f"', from=2-1, to=2-2]
	\arrow["\sigma"', from=1-1, to=2-1]
\end{tikzcd}  
\end{equation}
commute.
\end{definition}

In the special case of differential objects, a morphism of differential bundles between differential objects is just a morphism in the underlying tangent category. 

It was shown in \cite{cockett2016diffbundles} that every linear morphism of differential bundles is additive.

Linearity means that $(f, T(f))$ preserves the lift, and additivity means that $(f,g)$ preserves the addition and zero morphisms. In order to understand linearity from an intuitive perspective, we consider the example of trivial vector bundles $M \times V \xrightarrow{\pi_0} M$ in \SmMan.

\begin{example}[Linear morphisms in \SmMan]
Let $(f,g): (M \times V , M , \pi_0) \to (M' \times V', M' , \pi_0)$ be a morphism between two trivial differential bundles as in Example \ref{ex:trivial_bundle} in \SmMan.  Since this is a morphism of differential bundles, $g \circ \pi_0 = \pi_0 \circ f$. This means the map $f:M \times V \to M' \times V'$ has to be of the form $f = \langle g \circ \pi_0 , f_1 \rangle$ for some map $f_1: M \to M' \times V'$.
We would like to know when $(f,g)$ is a linear morphism.  The linearity condition from Definition \ref{def:differential_bundle_morphisms}, Diagram \ref{eqn:linear}, says that $\lambda \circ f = T(f) \circ \lambda: M\times V \to T(M\times V) $.  In the local coordinates from Example \ref{ex:smooth_manifolds_vs} the tangent bundle is a product, $T(U) \cong U \times \mathbb R^n$ for some open $U \subset M$. For $x \in U \subset M$ and $v \in T_xM \cong \mathbb R^n$ and a sufficiently small neighbourhood $U'$ around $f(x)$, the 
composition 
$$
\lambda \circ f: U \times V \to T(U' \times V')\cong U' \times T_{f(x)}U' \times V' \times V'$$
sends $(x,v)$ to 
$$
\lambda \circ f (x,v) = \lambda (g(x),f_1(x,v)) = (g(x), 0 , 0 ,f_1(x,v)) . 
$$ 
If  $(f,g)$ is linear, this must be equal to 
\begin{align*}\label{eq:lambdaT(f)local}
T(f) \circ \lambda (x,v) &= T(\langle g \circ \pi_0 , f_1 \rangle )(x,0,0,v) 
\\&= \left( g(x),f_1(x,0),\frac{\partial g(y)}{\partial y}|_{y=x}\cdot 0 , \frac{\partial f_1(x,w)}{\partial w}|_{w=0}\cdot v \right) 
\\& = \left(g(x), f_1(x,0), 0 , \frac{\partial f_1(x,w)}{\partial w}|_{w=0}\cdot v \right)
\end{align*}
which we expanded using the local coordinate expression from Example \ref{ex:smooth_manifolds_vs}. We conclude that 
$$\left(g(x), f_1(x,0), 0 , \frac{\partial f_1(x,w)}{\partial w}|_{w=0}\cdot v \right)=\left(g(x), 0, 0, f_1(x,v)\right)$$.
Therefore $(f,g)$ is linear precisely when $f_1(x,v)= \frac{\partial f(x,w)}{\partial w}|_{w=0}\cdot v$ for all $v\in \mathbb{R}^n$, including $v=0$.
\end{example}

\begin{remark}\label{remark:linear_neq_additive}
It is possible for a bundle morphisms to be additive but not linear. Consider the category ${\mathbb P \mathrm{oly}}_{\mathbb Z /p}$ of polynomials over the field with $p$ elements, as introduced in Example \ref{ex:Z_p}.  Let  $\mathbb Z / p$ be the differential object with the zero map as $\zeta$, the addition of field elements as $\sigma$ and $\langle 0 , 1_{\mathbb Z / p} \rangle$ as $\lambda$ as in Example \ref{ex:N_bullet_differential}(b). Define a morphism  $f: \mathbb Z / p \to \mathbb Z / p$  by $x \mapsto x^p$. This is a morphism of differential bundles because $\mathbb Z/p$ is a differential object and thus every morphism between them is a differential bundle morphism.  Since we are working in $\mathbb Z / p$, the identities $x^p + y^p = (x+y)^p$ and $0^p = 0$ hold, making Diagram (2) from Definition \ref{def:differential_bundle_morphisms} commute.  Thus $f$ is an additive morphism.

 From Example \ref{ex:Z_p} and Example \ref{ex:smooth_manifolds_vs}, $T(f)$ is given by the Jacobi matrix.  However the Jacobi matrix (i.e. derivative) of $f$ is $f'(x) = p x^{p-1}=0$.  Therefore, $T(f)$ is zero. Then $T(f) \circ \lambda  (1) = T(f)(0,1) = (f(0), f'(0)) = (0,0)$ while $\lambda \circ f (1) = \lambda (1^p) = (0,1)$.
Consequently Diagram (1) in Definition \ref{def:differential_bundle_morphisms} fails to commute.  

\end{remark}

The composition of additive/linear maps is additive/linear, thus we can define three different categories of differential bundles: We can include any differential bundle morphisms, additive differential bundle morphisms or linear differential bundle morphisms.

\begin{definition}\label{def:names_of_categories_involved}
Let $\mathbb X$ be a tangent category.
\begin{enumerate}[label=(\alph*)]
    \item Let $\DBun(\mathbb X)$ be the category of differential bundles and morphisms of differential bundles in $\mathbb X$.
    \item  The categories of differential bundles with additive/linear morphisms are denoted by $ \DBun_\add(\mathbb X) $ and $\DBun_\lin(\mathbb X)$ respectively.
    \item The full subcategories of $\DBun(\mathbb X)$, $\DBun_\add(\mathbb X)$ and $\DBun_\lin(\mathbb X)$ consisting of the differential objects (differential bundles over the terminal object) are denoted by $\DObj(\mathbb X)$, $\DObj_\add(\mathbb X)$ and $\DObj_\lin(\mathbb X)$, respectively.
\end{enumerate}
 If there is no terminal object we use the convention that both $\DObj(\mathbb X)$ and $\DObj_\lin(\mathbb X)$ are the empty category.
\end{definition}

\section{Characterizing Tangent categories as \weil-actegories}
As presented in Definition \ref{def:tangent_cat}, tangent categories are an axiomatization of manifolds, tangent bundles, and smooth maps. However, there is an alternative approach to tangent categories using actegories which was first developed by Garner and Leung in \cite{Garner2018:embedding_theorem} and \cite{Leung2017}.  Garner and Leung noticed that the morphisms used in the axioms from Definition \ref{def:tangent_cat} correspond to morphisms in a certain category \weil{} which are essential to the defining characteristics of \weil{}.  Tangent categories can be equivalently defined using the action of \weil{} on the desired tangent category. 

In Section \ref{sec:actegories}, we will begin by recalling what it means for a monoidal category $\mathbb M$ to act on another category, making it an $\mathbb M$-actegory.
Then in Section \ref{sec:tangent_structures_as_weil}, we will define the category \weil{} and will present Leung-Garner's alternative definition of a tangent category, in which tangent categories are characterized as \weil-actegories.  In this new characterization, one can also interpret the tangent functors and tangent transformations of Definitions \ref{def:lax_tangent_functor} and \ref{def:tangent_trafo} in terms of \weil{}-linear functors and \weil{}-linear natural transformations, which are the functors and natural transformations which preserve the  action of \weil{}.  Garner shows that there is a 2-equivalence of the 2-category of tangent categories, strong tangent functors and linear tangent transformations and the 2-category of \weil{}-acteogires, (strong) \weil{}-linear fuctors and \weil{}-linear natural transformations.  In the final result of this section, we adapt Garner's argument to obtain similar equivalences for the 2-categories with lax tangent functors and tangent transformations which are not necessarily linear.

\subsection{Actegories}\label{sec:actegories}

We introduce actegories as a tool to encode tangent categories, tangent functors and tangent transformations. Here in Section \ref{sec:actegories} we define actegories and establish some basic properties. In Section \ref{sec:tangent_structures_as_weil} we will use actegories to characterize tangent categories.

Here we use the definition of monoidal categories as in  \cite[Appendix E2]{riehl2017category}. A monoidal category $(\mathbb A, \otimes , I)$ has a monoidal product $\otimes$ and unit $I$.   A monoidal category also comes equipped with an associator, $a$, and left and right unitors, $l$ and $r$, respectively.  These are natural transformations, and their components at objects $x,y,z$ in $\mathbb A$ will be denoted $a_{x,y,z}: (x \otimes y) \otimes z \cong x \otimes (y \otimes z)$, $l_x : I \otimes x \cong x$ and $r_x: x \otimes I \cong x$.  When want to emphasize these natural transformations, we denote the monoidal category structure by $(\mathbb A, \otimes , I, a, l, r)$.  The notation $\mathbb A^\otimes$ is shorthand for $(\mathbb A, \otimes , I, a, l, r)$. 

The following is an example of a monoidal category which is important for characterizing tangent categories.  

\begin{example}
 Given a category $\mathbb X$, the category $\End(\mathbb X)$ of endofunctors of $\mathbb X$ has a monoidal structure given by the composition of endofunctors as monoidal product and the identity functor as unit, denoted as $\End( \mathbb X)^\circ$.
\end{example}

Monoidal functors from any monoidal category $\mathbb M^\otimes$ into $\End(\mathbb X)^\circ$ describe $\mathbb M^\otimes$ acting on $\mathbb X$. These actions are made precise below in the notion of an actegory.
Actegories were first defined more than 50 years ago in \cite{benabou_intro_to_bicategories}.  We present an equivalent definition from \cite{capucci2023actegories}, a more modern reference that provides more details.
While it is more common to denote the action by $\cdot$ or $\otimes$, we use $T$ for it to resemble the tangent bundle endofunctor $T$.

\begin{definition}\label{def:actegories}\cite[Definition 3.1.1]{capucci2023actegories}
Let $\mathbb M^\otimes = (\mathbb M, \otimes , I, a, l, r)$ be a monoidal category with symmetric monoidal product $\otimes$ and unit $I$. A (left) \textbf{$\mathbb M^\otimes$-actegory} $\mathbb X$ (or left
$\mathbb M$-action) is a category $\mathbb X$ equipped with a functor
$$
\Te : \mathbb M \times \mathbb X \to \mathbb X
$$
and two natural isomorphisms with components at $X \in \mathrm{Obj}(\mathbb X)$ and $m,n \in \mathrm{Obj}(\mathbb M)$
$$
\eta_X : X \cong \Te(I, X) ,  \qquad  \mu_X: \Te( X ,\Te( m , n)) \cong \Te(X \otimes m, n),
$$

respectively called unitor and multiplicator, satisfying coherence laws expressed as the following commutative diagrams:
\[\begin{tikzcd}[column sep=tiny]
	  {\Te(m, \Te(n, \Te (p, X)))} && {\Te(m \otimes n, \Te (p, X))} \\
	{\Te(m, \Te(n \otimes p, X))} && {\Te((m \otimes n) \otimes p , X)} \\
	& {\Te (m \otimes (n \otimes p) , X)}
	\arrow["{\mu_{m,n,\Te(p,X)}}", from=1-1, to=1-3]
	\arrow["{\Te(a_{m,n,p},1_X)}", from=2-3, to=3-2]
	\arrow["{\mu_{m \otimes n, p , X}}", from=1-3, to=2-3]
	\arrow["{\Te(1_m,\mu_{n,p,X})}"', from=1-1, to=2-1]
	\arrow["{\mu_{m,n\otimes p,X}}"', from=2-1, to=3-2]
\end{tikzcd}\]
\[\begin{tikzcd}[column sep=tiny]
	{\Te(I,\Te(m,X))} && {\Te(I \otimes m,X))} && {\Te(m, \Te(I,X))} && {\Te(m \otimes I,X)} \\
	& {\Te(m,X)} &&&& {\Te(m,X)}
	\arrow["{\eta_{\Te(m,X)}}", from=2-2, to=1-1]
	\arrow["{\Te(l_m,1_X)}"', from=2-2, to=1-3]
	\arrow["{\mu_{I,m,X}}", from=1-1, to=1-3]
	\arrow["{\mu_{m,I,X}}", from=1-5, to=1-7]
	\arrow["{\Te(r_m,1_X)}"', from=2-6, to=1-7]
	\arrow["{\Te(1_m, \eta_X)}", from=2-6, to=1-5]
\end{tikzcd}\]
where $a_{m,n,p}$, $l_m$ and $r_m$ are the natural isomorphisms encoding associativity,  left-unitality, and right-unitality of $\mathbb M$. 
\end{definition}

Any monoidal category $\mathbb M^\otimes$ is an actegory over itself with the functor $\Te : \mathbb M \times \mathbb M \to \mathbb M$ given by the monoidal product, $T(M,N) = M \otimes N$. In particular this makes $\weil$ into a $\weil^\otimes$-actegory. 

\begin{remark}\label{rem:actegory_notation_index}
    For a given object $m \in \mathbb M_0$, we sometimes denote $T_\bullet(m,-): \mathbb X \to \mathbb X$ as $T_m:\mathbb X \to \mathbb X$ and thus $T_\bullet(m,X)$ as $T_m(X)$.
\end{remark}

Functors between $M$-actegories should preserve the actegory structure.  There are two main ways of defining these functors, depending on whether the action must be preserve up to natural isomorphism or up to natural transformation.  In the latter case, there is also a choice of which direction the natural transformation should go.  These choices are all encoded in the following definition.

\begin{definition}\label{def:linear_functors}\cite[Definition 3.3.2]{capucci2023actegories}
Let $(\mathbb X, \Te, \eta , \mu)$ and $(\mathbb X', \Te', \eta' , \mu')$ be left $\mathbb M$-actegories. A \textbf{lax $\mathbb M$-linear functor} $(F, \alpha)$ between them is a functor $F : \mathbb X \to \mathbb X'$ equipped with a natural transformation
$$
\alpha:\Te'\circ (1_\mathbb M \times F)\Rightarrow F\circ \Te,
$$
i.e. a natural collection of maps $\alpha_{m,x}  : \Te' (m , F (x)) \to F (\Te(m, x))$
for $x \in \mathrm{Obj}(\mathbb X), m \in \mathrm{Obj}(\mathbb M)$,
called \textbf{lineator} satisfying the following coherence laws for all $m, n \in \mathrm{Obj}(M)$ and $x \in \mathrm{Obj}(\mathbb X)$:
\[\begin{tikzcd}[column sep=small]
	& {\Te'(m,\Te'(n,F(x)))} && {\Te'(m,F(\Te(n,x)))} \\
	&{\Te'(m \otimes n,F(x))} && {F(\Te(m, \Te(n,x)))} \\
	&& {F(\Te (m \otimes n, x))}
	\arrow["{\Te' (m,\alpha_{n,x})}", from=1-2, to=1-4]
	\arrow["{\mu'_{m,n,F(x)}}"', from=1-2, to=2-2]
	\arrow["{\alpha_{m,\Te(n,x)}}", from=1-4, to=2-4]
	\arrow["{F(\mu_{m,n,x})}", from=2-4, to=3-3]
	\arrow["{\alpha_{m \otimes n , x}}"', from=2-2, to=3-3]
\end{tikzcd}\]
\[\begin{tikzcd}
	{\Te'(I_\mathbb M, F(X) )} && {F(\Te(I_\mathbb M , X))} \\
	& {F(X)}
	\arrow["{\alpha_{I_\mathbb M, X}}", from=1-1, to=1-3]
	\arrow["{\eta'_{F(X)}}"', from=1-1, to=2-2]
	\arrow["{F(\eta_X)}", from=1-3, to=2-2]
\end{tikzcd}\]
We call $(F, \alpha)$ a \textbf{strong $\mathbb M$-linear functor}, if $\alpha$ is a natural isomorphism. An \textbf{oplax $\mathbb M$-linear functor} $(F,\alpha)$ is a lax $\mathbb M$-linear functor $(F, \alpha): \mathbb X^\mathrm{op} \to \mathbb {X'}^\mathrm{op}$, i.e. a functor $F: \mathbb X \to \mathbb X'$ with a lineator $\alpha: F \circ \Te \Rightarrow \Te' \circ (1_\mathbb M \times F)$.

\end{definition}
Having defined actegories and functors between actegories, we will continue defining natural transformations between actegories, that will have a special linearity condition. Unlike \cite{capucci2023actegories} we will not always require this linearity condition and call the natural transformation linear if it fulfills the condition.
\begin{definition}\label{def:actegory_linear_transformations}\cite[Definition 3.3.9]{capucci2023actegories}
Let $(F,\alpha)$ and $(F', \alpha')$ be $\mathbb M$-linear functors between the $\mathbb M$-actegories $(\mathbb X, \Te, \eta , \mu)$ and $(\mathbb X', \Te', \eta' , \mu')$.
\begin{enumerate}[label=(\alph*)]
\item A \textbf{$\mathbb M$-natural transformation} between $(F,\alpha)$ and $(F', \alpha')$ is simply a natural transformation $\varphi: F \Rightarrow F'$ of underlying functors.
\item It is called a \textbf{linear $\mathbb M$-natural transformation} if in addition the diagram
\begin{equation}
\label{diagram:linear_actegory_trafo}
\begin{tikzcd}[column sep=large,row sep=2.25em]
	{\Te(m,F(X))} & {\Te(m,F'(X))} \\
	{F(\Te(m,X))} & {F'(\Te(m,X))}
	\arrow["{\alpha_{m,X}}"', from=1-1, to=2-1]
	\arrow["{\varphi(\Te(m,X))}"', from=2-1, to=2-2]
	\arrow["{\alpha'_{m,X}}", from=1-2, to=2-2]
	\arrow["{\Te(1_m, \varphi)}", from=1-1, to=1-2]
\end{tikzcd}
\end{equation}
commutes.
\end{enumerate}
\end{definition}

\begin{example}
Let \FinSet{} be the category of finite sets and \Top{} be the category of topological spaces. With the Cartesian product as monoidal product and the one point space $*=\{0\}$ as the monoidal unit, \FinSet{} is made into a monoidal category $\FinSet^\times$. We will now define two actions making $\Top$ into a \FinSet-actegory:

\begin{enumerate}
    \item The functor
    $$
    T_0: \FinSet \times \Top \to \Top, (A,X) \mapsto \Disc(A) \times X
    $$
    where $\Disc$ sends the set $A$ to the topological space that is $A$ with the discrete topology.
    The pair $(\Top, \T_0)$ is a  \Set-actegory because the Cartesian product of sets with the discrete topology is the product of discrete topological spaces, the Cartesian product is associative and  the one point set with the discrete topology is the one point space.
    \item The projection functor
    $$
    T_1: \FinSet \times \Top \to \Top, (A,X) \mapsto X
    $$
    also is trivially an actegory. All the coherence-morphisms become identities.
\end{enumerate}
Now the pair $(1_\Top, \pi_1)$, comprised of the identity functor $1_\Top: \Top \to \Top$ of topological spaces together with the natural transformation $\pi_1 : T_0(A,X) = \Disc(A) \times X \Rightarrow X = T_1(A,X)$ as lineator, is an oplax $\Set$-linear functor.
\end{example}

\subsection{Tangent structures and Weil algebras}\label{sec:tangent_structures_as_weil}

In \cite{Leung2017}, Leung presented an alternative characterization of tangent categories which was inspired by synthetic differential geometry (SDG).  The characterization uses a monoidal category \weil{} whose objects are certain commutative algebras over the monoid $\mathbb{N}$ (see Definition \ref{def:weil}).  Importantly, the category \weil{} contains the dual numbers, i.e. the algebra $\mathbb{N}[x]/\langle x^2 \rangle $.  In SDG, the dual numbers play the role of an infinitesimal which represents tangent lines.  Leung uses this idea to recast tangent categories: from a strong monoidal functor from \weil{} to $\End^\circ(\mathbb X)$, one obtains a functor $T: \mathbb{X}\to \mathbb{X}$ as the image of the dual numbers.  The natural transformations $p,0,+,\ell,c$ of Definition \ref{def:tangent_cat} the images of certain maps $p,0,+,\ell,c$, which are intentionally given the same names.  The functor $T$ together with these natural transformations fulfill Definition \ref{def:tangent_cat}.  Leung's result is summarized as Theorem \ref{thm:Leung}.  

In \cite{Garner2018:embedding_theorem}, Garner extended this result to show that there is a 2-equivalence between a 2-category of Weil algebras and a 2-category of tangent categories.  That is, the equivalence extends to include strong tangent functors between tangent categories and linear tangent transformations between tangent functors.  This result is summarized in Theorem \ref{thm:Garner}.  In Garner's equivalence, strong tangent functors correspond to strong \weil-linear functors, and linear tangent transformations correspond to linear transformations. The goal of this section is to  generalize this result to 2-categories of tangent categories with lax tangent functors and general tangent transformations.  Our strategy for proving this will be to modify Garner's proof of Theorem \ref{thm:Garner}.  For this reason, in this section we review Weil algebras, Leung's correspondence, Garner's 2-equivalence and the proof with modifications as needed.

\begin{definition}\label{def:weil}\cite[Remark 3.18]{Leung2017}
Weil algebras are commutative algebras that are of the form $$\mathbb N[x_1, ... , x_n]/<x_i x_j | i \sim j>$$ for some equivalence relation $\sim$. A morphism of Weil algebras is  an $\mathbb N$-linear semi-ring homomorphism that preserves the unit. 
The category \weil{} consists of Weil algebras and morphisms of Weil algebras.
\end{definition}

\begin{remark}\label{rem:characterization_of_weil_objects}
Every object of \weil{} is generated by the dual numbers, $W= \mathbb N [x]/\langle x^2 \rangle$.  The category \weil{} has all finite coproducts and certain products.  
The coproduct, $W\otimes W$, is given by
$$
W \otimes W = \mathbb N[x,y]/<x^2 , y^2>.
$$
  The product, $W\times W$ is explicitly:
$$
W \times W = \mathbb N[x,y]/<x^2 , y^2,xy>.
$$
Note that $W\otimes W$ contains an element $xy$.  However, the product $W^2=W\times W$ does not contain $xy$. 
It is possible to use the explicit formulae for $W\otimes W$ and $W\times W$ to express coproducts and products in \weil{}.

Every Weil Algebra $A$ can be written uniquely as 
$$
A = W^{n_1}  \otimes ... \otimes W^{n_k}
$$
In fact, in \cite[Definition 4.2]{Leung2017}, Leung uses this characterization to define the category $\weil$ which he calls Weil$_1$. Later, after Proposition 6.6 he remarks that the objects of $\weil$ are described by a relation $\sim$ that corresponds to a disjoint union of full graphs, to an equivalence relation in our perspective.
\end{remark}
\begin{remark}\label{rem:double_use_p+0cell}
There are certain maps in \weil{} that behave like the natural transformations used  in Definition \ref{def:tangent_cat}.  For this reason, we use the same notation to denote these maps in \WEIL{} as the natural transformations in a tangent category.  Theorem \ref{thm:Leung} makes it clear that this ambiguity is justified. The maps in question are:
\begin{align*}
p&: W \to \mathbb N &  ax+b &\mapsto b
\\
+&: W \times W \mapsto W & ax_1 + bx_2 +c &\mapsto (a+b)x + c
\\
0&: \mathbb N \mapsto W & a & \mapsto 0 \cdot x + a
\\
c&: \mathrm W \otimes W \to W \otimes W & ax_1 x_2 + b x_1 + cx_2 +d & \mapsto  ax_1 x_2 + c x_1 + b x_2 +d
\\
\ell&: W \to W \otimes W & ax+b & \mapsto a x_1 x_2 + b
\end{align*}

\end{remark}
An immediate result of the observation that these maps behave like the tangent transformations is that \weil{} itself is a tangent category using this structure.  This is the content of the next Theorem.

\begin{theorem}\label{thm:Leung_tangent_on_Weil}\cite[Proposition 4.1]{Leung2017}
The (endo)functor
$$
W \otimes \underline{~}: \weil{} \to \weil{} 
$$
is the tangent functor of a tangent structure on \weil. 
\end{theorem}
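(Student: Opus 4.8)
The plan is to verify the axioms of Definition \ref{def:tangent_cat} directly for the endofunctor $T := W \otimes \underline{~}$, using the maps $p, 0, +, \ell, c$ of Remark \ref{rem:double_use_p+0cell} as the structural natural transformations, and exploiting the fact that $W \otimes \underline{~}$ is exactly the action of $\weil$ on itself by the monoidal product $\otimes$ (as noted after Definition \ref{def:actegories}). First I would record that $\otimes$ is a coproduct in $\weil$ (Remark \ref{rem:characterization_of_weil_objects}), so $W \otimes \underline{~}$ is a functor that commutes with all colimits; in particular, $W^{\otimes n} \otimes \underline{~}$ is $T^n$, and the explicit descriptions $W \otimes W = \mathbb N[x,y]/\langle x^2, y^2\rangle$ and $W^{\otimes 3} = \mathbb N[x,y,z]/\langle x^2,y^2,z^2\rangle$ make all iterated tangent bundles completely concrete. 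The natural transformations $p : T \Rightarrow 1$, $0 : 1 \Rightarrow T$, $\ell : T \Rightarrow T^2$, $c : T^2 \Rightarrow T^2$ are induced by tensoring the corresponding Weil-algebra maps of Remark \ref{rem:double_use_p+0cell} with the identity on an arbitrary argument $A$; naturality is automatic because these are whiskered natural transformations of the functor $\underline{~} \otimes A$ (or rather $A \otimes \underline{~}$, using commutativity of $\otimes$ on Weil algebras). The addition $+ : T_2 \Rightarrow T$ requires first identifying the pullback $T_2 A = TA \times_A TA$; since $p_A = p \otimes A$ is a split epi with a compatible description, I would check that $T_2 A \cong (W \times W) \otimes A$ where $W \times W = \mathbb N[x,y]/\langle x^2,y^2,xy\rangle$ is the product from Remark \ref{rem:characterization_of_weil_objects}, and that $T$ preserves these pullback powers because $W \otimes \underline{~}$ preserves the relevant limits of split epis (this is the one place where the special shape of Weil-algebra products matters).

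Next I would check, one diagram at a time, that $(TA, A, p_A, +_A, 0_A)$ is an additive bundle, that $(\ell_A, 0_A)$ and $(c_A, 1)$ are additive bundle morphisms of the required source and target, and that the coherence pentagons/hexagons relating $\ell$ and $c$ commute. Every one of these is obtained by tensoring $A$ onto an identity of Weil-algebra maps, so each diagram in $\weil$-actegory land reduces to a diagram of Weil algebras; and each of those can be verified by chasing a generic element (a polynomial of the form $\sum a_i x_i + \text{(higher, killed)} + b$) through the explicit formulas of Remark \ref{rem:double_use_p+0cell}. For instance $c^2 = 1$ is immediate from the swap formula, $\ell c = \ell$ follows because $\ell$ lands in the symmetric part $a x_1 x_2 + b$, and the three big coherence squares are the standard Weil-algebra computations that already appear implicitly in Leung's treatment. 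The functoriality $T(f) = W \otimes f$ preserving composition and identities is trivial since $\otimes$ is a bifunctor.

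The last and most substantive axiom is the universality of the vertical lift: that the square with vertices $T_2 A$, $T^2 A$, $A$, $TA$, top map $\nu = T(+)\circ \langle \ell_A \circ \pi_0, 0_{TA}\circ \pi_1\rangle$, is a pullback preserved by all powers of $T$. Here I would first compute $\nu$ explicitly: identifying $T_2 A \cong (\mathbb N[x,y]/\langle x^2,y^2,xy\rangle)\otimes A$ and $T^2 A \cong (\mathbb N[x,y]/\langle x^2,y^2\rangle)\otimes A$, the map $\nu$ should be (the tensoring with $A$ of) the inclusion $\mathbb N[x,y]/\langle x^2,y^2,xy\rangle \hookrightarrow \mathbb N[x,y]/\langle x^2,y^2\rangle$ — wait, more precisely $\nu$ is the algebra map that sends the generators appropriately so that the target's extra element $xy$ is the one adjoined. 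I then need: the square of Weil algebras
\[
\begin{tikzcd}
\mathbb N[x,y]/\langle x^2,y^2\rangle & \mathbb N[x]/\langle x^2\rangle \\
\mathbb N[x,y]/\langle x^2,y^2,xy\rangle & \mathbb N
\end{tikzcd}
\]
(with the evident maps) is a pushout in $\weil$, equivalently the pullback square in $\weil$ becomes a pullback after applying $\underline{~}\otimes A$ and then $W^{\otimes k}\otimes \underline{~}$. Since $\otimes A$ is cocontinuous and $W^{\otimes k}\otimes\underline{~}$ again preserves the limits in question, it suffices to prove this single pushout of Weil algebras; and that is a direct check on generators, since $W\otimes W$ is freely obtained from $W\times W$ by adjoining the product $xy$ of the two lift-generators, which is exactly the content of the comparison $W\times W \hookrightarrow W\otimes W$ in Remark \ref{rem:characterization_of_weil_objects}. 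The main obstacle I anticipate is precisely bookkeeping this pullback/pushout correctly — matching the abstract $\nu$ to the concrete algebra map and confirming that $W\otimes\underline{~}$ (and its powers) preserve it — rather than any of the many routine diagram chases, which all collapse to elementary manipulations of linear polynomials modulo the square relations.
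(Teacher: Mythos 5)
The paper does not actually prove this statement: it is imported verbatim from Leung (\cite[Proposition 4.1]{Leung2017}), so there is no in-paper argument to compare against. Your proposal is, in effect, a reconstruction of Leung's own verification, and its architecture is sound: take $p,0,+,\ell,c$ from Remark \ref{rem:double_use_p+0cell}, whisker them with $-\otimes A$, reduce every coherence diagram to an element chase on linear polynomials, and isolate the two genuinely substantive points, namely the identification $T_2A\cong (W\times W)\otimes A$ and the universality of the vertical lift, both of which come down to the comparison between $W\times W=\mathbb N[x,y]/\langle x^2,y^2,xy\rangle$ and $W\otimes W=\mathbb N[x,y]/\langle x^2,y^2\rangle$. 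Your corrected description of $\nu$ is also the right one: it sends the first lift-generator to $x_1x_2$ and the second to $x_2$, and its image is exactly the fibre of $1\otimes p$ over the constants, which is where the pullback property comes from.

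The one place the write-up would need repair is the limit/colimit bookkeeping at the end. The universality square (and each foundational square $A\otimes(B\times C)$) is a \emph{pullback} in \weil, not a pushout, and the fact that $-\otimes A$ and $W^{\otimes k}\otimes-$ preserve these squares is a limit-preservation statement; it does not follow from $-\otimes A$ being cocontinuous, which is the justification you reach for. This preservation is precisely the nontrivial content (it is why Theorem \ref{thm:Leung} has to impose conditions 1 and 2 as separate hypotheses rather than getting them for free), and it must be checked directly, e.g.\ by computing the pullback in commutative $\mathbb N$-semirings and observing that it lands back in \weil{} with the expected presentation --- exactly the hands-on check you correctly flag as necessary for the $T_2$ identification but then skip for the vertical-lift square. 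With that step done honestly, the proof goes through.
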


The category $\End(\mathbb X)$ of endofunctors of a category $\mathbb X$ has a monoidal structure, given by the composition of endofunctors. The category {\weil} has a monoidal structure, given by the coproduct $\otimes$.

A strong monoidal functor is a functor $F: \mathbb X^\otimes \to \mathbb X'^\otimes$ between monoidal categories together with  natural isomorphisms
$$
\mu: F(- \otimes -) \cong F(-) \otimes F(-) ~\text{ and }~ \eta: F(I_\mathbb X) \cong I_\mathbb X'
$$
fulfilling some compatibility properties that are explained in \cite[Definition 1.27]{Heunen_book}. Using these monoidal structures, we can now present the anticipated characterization of tangent structures by Leung.

\begin{theorem}\label{thm:Leung}\cite[Theorem 14.1]{Leung2017}
Giving a tangent structure $(T,p,0,+,\ell,c)$ on $\mathbb X$ is equivalent to giving a strong monoidal functor $\tilde T: \weil^\otimes \to \End^\circ(\mathbb X)$ satisfying the following two conditions:
\begin{enumerate}
    \item
    The diagrams of the form
\[\begin{tikzcd}
	{A \otimes (B \times C)} & {A \otimes B} \\
	{A \otimes C} & A
	\arrow["{1_A \otimes\pi_C}"', from=1-1, to=2-1]
	\arrow["{1_A \otimes \epsilon_B}", from=1-2, to=2-2]
	\arrow["{1_A \otimes \epsilon_C}"', from=2-1, to=2-2]
	\arrow["{1_A \otimes\pi_B}", from=1-1, to=1-2]
	\arrow["\lrcorner"{anchor=center, pos=0.125}, draw=none, from=1-1, to=2-2]
\end{tikzcd}.\]
    are pullbacks in \weil{} and $\tilde T$ preserves them. 
    \item The diagrams of the form 
\[\begin{tikzcd}
	{W\times W} && {W \otimes W} \\
	{\mathbb N} && W
	\arrow["\nu", from=1-1, to=1-3]
	\arrow["{\mathrm{ev}_0}"', from=1-1, to=2-1]
	\arrow["\lrcorner"{anchor=center, pos=0.125}, draw=none, from=1-1, to=2-3]
	\arrow["{1_W \otimes p}", from=1-3, to=2-3]
	\arrow["0"', from=2-1, to=2-3]
\end{tikzcd}\]
where $\nu$ is a shorthand for $(1_W \otimes +)\circ (\ell \times (0_W \otimes 1_W ))$,
are pullbacks in \weil{} and $\tilde T$ preserves them. 
\end{enumerate}

The tangent bundle endofunctor $T$ is obtained from $\tilde T$ by setting  $T=\tilde T(W)$, the natural transformations $p,0,+,\ell,c$ in $\mathbb X$ are the images $\tilde T(p), \tilde T(0), \tilde T(+), \tilde T(\ell), \tilde T(c)$ of the morphisms in \weil{} with the same name. 

\end{theorem}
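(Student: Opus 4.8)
The statement is an equivalence, so the plan is to produce constructions in both directions and then verify that they are mutually inverse. The conceptual engine is a presentation of the monoidal category $\weil^\otimes$ by generators and relations: by Remark \ref{rem:characterization_of_weil_objects} every Weil algebra is uniquely of the form $W^{n_1}\otimes\cdots\otimes W^{n_k}$, so every object is built from the dual numbers $W$ using the monoidal product $\otimes$ (the coproduct) together with the finite products $W^n$, and every morphism factors through composites of the named maps $p,0,+,\ell,c$ and the universal maps into products and pullbacks. The first — and most laborious — step is therefore to make this presentation precise, which is essentially Leung's graph-theoretic description of $\weil$, and to identify exactly which pullbacks in $\weil$ are needed to rebuild everything: these turn out to be precisely the ones appearing in conditions (1) and (2).

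\textbf{Forward direction.} Given $(T,p,0,+,\ell,c)$, set $\tilde T(W):=T$ and extend to objects by $\tilde T(W^{n_1}\otimes\cdots\otimes W^{n_k}):=\tilde T(W^{n_1})\circ\cdots\circ\tilde T(W^{n_k})$, where $\tilde T(W^n)$ is the endofunctor $M\mapsto T_nM$, the $n$-fold pullback power of $p_M$, which exists and is preserved by $T$ by Definition \ref{def:tangent_cat}. On the generating morphisms put $\tilde T(p):=p$, $\tilde T(0):=0$, and so on, and extend to all morphisms via the presentation and functoriality, sending universal maps into products and pullbacks to the corresponding universal maps in $\mathbb X$ (legitimate because $T$ preserves the relevant pullbacks). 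One then checks: (i) $\tilde T$ is well defined, i.e. the tangent-category axioms (naturality, the additive-bundle laws for $(TM,M,p_M,+_M,0_M)$, $c^2=1$, $\ell c=\ell$, the $\ell$–$c$ coherences) are exactly the relations of the presentation; (ii) the comparison maps $\tilde T(A\otimes B)\cong\tilde T(A)\circ\tilde T(B)$ are natural isomorphisms satisfying the monoidal coherence laws; (iii) conditions (1) and (2) hold, since the pullbacks in (1) are sent to the foundational pullbacks $T^nT_k$ and the pullback in (2) is sent to the universality of the vertical lift, both of which are part of the tangent structure.

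\textbf{Reverse direction.} Given a strong monoidal $\tilde T$ satisfying (1) and (2), put $T:=\tilde T(W)$ and let $p,0,+,\ell,c$ be $\tilde T$ applied to the morphisms of the same name in $\weil$ listed in Remark \ref{rem:double_use_p+0cell}. Since $\tilde T$ is a functor it preserves every equation holding in $\weil$; the additive-bundle axioms, $c^2=1$, $\ell c=\ell$, and the $\ell$–$c$ coherences all hold in $\weil$ (they can be verified directly on the explicit algebras), hence so do their images. Strong monoidality gives $\tilde T(W^{\otimes n})\cong T^n$ compatibly, so the images of $p,0,+,\ell,c$ really are the structure natural transformations of Definition \ref{def:tangent_cat}. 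Applying $\tilde T$ to the pullbacks $A\otimes(B\times C)$ of condition (1) forces the pullback powers $T_n$ of each $p_M$ to exist and to be preserved by $T$; condition (2) forces the universality of the vertical lift. Thus $(T,p,0,+,\ell,c)$ is a tangent structure.

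\textbf{Mutual inverseness and main obstacle.} On objects the two passages are inverse because $W\mapsto T\mapsto\tilde T(W)$ and the decomposition into the $W^{n_i}$ is unique; on the named morphisms they are inverse by construction; and since, by the presentation, a strong monoidal functor out of $\weil^\otimes$ satisfying (1)–(2) is determined up to unique isomorphism by the image of $W$ and of the named generators, the two assignments are inverse equivalences. The main obstacle is the first step: establishing the generators-and-relations presentation of $\weil^\otimes$ — equivalently, Leung's combinatorial analysis of Weil algebras as graphs and of the morphisms between them — and checking that the pullbacks singled out in (1) and (2) are exactly what is needed to recover all of $\weil$. Everything after that is a lengthy but routine diagram chase matching the relations of the presentation against the axioms of Definition \ref{def:tangent_cat}.
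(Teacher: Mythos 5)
The paper does not prove this theorem: it is imported verbatim from Leung (\cite[Theorem 14.1]{Leung2017}), so there is no in-paper argument to compare yours against. Judged on its own, your outline follows exactly the strategy of Leung's original proof — evaluate at the dual numbers $W$ and the named generating morphisms in one direction, extend from generators in the other, and reduce mutual inverseness to a presentation of $\weil^\otimes$ by generators and relations.

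The genuine gap is that the step you label ``the main obstacle'' is not an obstacle to be cleared before a routine diagram chase; it \emph{is} the theorem. Well-definedness of $\tilde T$ in the forward direction requires showing that every identity holding in $\weil$ among composites of $p,0,+,\ell,c$, the coproduct structure maps, and the universal maps into the products $W^n$ and the pullbacks of conditions (1)--(2), is a formal consequence of the finitely many axioms in Definition \ref{def:tangent_cat}. Conversely, faithfulness of the ``evaluate at $W$'' passage requires that no further relations hold. Establishing this two-way correspondence is the content of Leung's multi-section combinatorial analysis (Weil algebras as graphs, classification of all morphisms between them, and case-by-case matching of relations), of which only the morphism-generation statement is recorded in this paper as Lemma \ref{lemma:Leung_on_weil-morphs}. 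Your sketch names this step but supplies no argument for it, so as written it is a correct roadmap rather than a proof. Two smaller points you should also address if you carry this out: (i) defining $\tilde T$ on objects by $\tilde T(W^{n_1}\otimes\cdots\otimes W^{n_k}):=\tilde T(W^{n_1})\circ\cdots\circ\tilde T(W^{n_k})$ requires a convention for the ordering of tensor factors, since $\otimes$ is symmetric while composition in $\End(\mathbb X)$ is not — this is precisely where the canonical flip $c$ earns its place among the generators; and (ii) in condition (1) the outer tensor factor $A$ is what encodes preservation of the foundational pullbacks by all powers of $T$, not just by $T$ itself, so your identification of these with the pullbacks $T^nT_k$ should be made explicit rather than left implicit.
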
 
We will call the pullbacks of Theorem \ref{thm:Leung} the \textbf{tangent pullbacks}.\footnote{In \cite{Leung2017}, the diagrams in Theorem \ref{thm:Leung}.1 are called \textbf{foundational} pullbacks.  Here we have used the terminology \textbf{tangent} pullbacks in order to include the pullbacks in the second part of the theorem.}
That the functor is monoidal means in particular that it sends the coproduct of algebras in  $\weil$ to the composition of functors.

\begin{remark} \label{remark:T_vs_hat_T}
    As a convention, we denote a tangent bundle endofunctor like in Definition \ref{def:tangent_cat} as $T: \mathbb X \to \mathbb X$ (without decoration) and the corresponding \weil-action
    as $\Te: \weil \to \End(\mathbb X)$ (with a hat). 
    We use the same notation for the corresponding functor $\Te: \weil \times \mathbb X \to \mathbb X$. 
\end{remark}

\begin{remark}
As a special case of the correspondence in Theorem \ref{thm:Leung}, the monoidal category \weil{} acts on itself by the tensor product. This monoidal functor $\weil \to \End(\weil)$ encodes the tangent structure of Theorem \ref{thm:Leung_tangent_on_Weil}.
\end{remark}

In order to make the correspondence between \weil-actegories and tangent categories more precise we use the language of 2-categories. 
For convenience, we review the main structures of 2-categories here - these will be required when we review Garner's proof of Theorem \ref{thm:Garner}.  For a full definition which includes the precise formulation of the associativity and unitality properties, see \cite[Definition 1.7.8]{riehl2017category}, \cite[Chapter VII]{maclane:71} or \cite[Proposition 2.3.4]{Johnson_Yau_2021_2-dimensional_categories}.  
A \textbf{2-category} consists of objects and for each pair of objects $A,B$ a hom-category $\underline{\Hom}(A,B)$ with a unit $1_A \in \underline{\Hom}(A,A)$ and a composition bifunctor $\circ: \underline{\Hom}(B,C) \times \underline{\Hom} (A,B) \to \Hom(A,C)$. The objects of $\underline{\Hom}(A,B)$ are called morphisms, the morphisms of $\underline{\Hom}(A,B)$ are called 2-morphisms.  The 2-morhpisms can be composed in two ways. Given morphisms $f,g,h : A \to B$ and 2-morphisms $\alpha: f \Rightarrow g$ and $\beta : g \Rightarrow h$, their vertical composition $\beta * \alpha: f \Rightarrow h$ is the composition in the category $\underline{\Hom}(A,B)$. Given morphisms $f_1, f_2: A \to B$ and $g_1, g_2 : B \to C$ and 2-morphisms $\gamma: f_1 \Rightarrow f_2, \delta: g_1 \Rightarrow g_2$, their horizontal composition $\delta \circ \gamma: g_1 \circ f_1 \Rightarrow g_2 \circ f_2$
is the application of the bifunctor $\circ$.

Both compositions are strictly associative and unital. We denote the names of 2-categories with all capitals in simplified font. For example the 2-category of categories, functors and natural transformations we denote as $\sf{CAT}$.

A \textbf{2-functor} $F: {\sf X} \to {\sf X}'$ between 2-categories is an assignment that sends objects to objects together with functors that send the hom-categories of $\sf X$ to the hom-categories of ${\sf X}'$ in a way that preserves compositions and units strictly. Thus it assigns morphisms to morphisms and 2-morphisms to 2-morphisms. For composable morphisms $f$ and $g$, a 2-functor $F$ fulfills $F(f \circ g) = F(f) \circ F(g)$ and $F(1) = 1$ (strict preservation of composition and unit). Since $F$ is a functor of hom-categories it also preserves vertical composition of 2-morphisms, i.e. for $\alpha: f \Rightarrow g$ and $\beta: g \Rightarrow h$ $F(\beta * \alpha) = F(\beta) * F(\alpha)$. The precise definition can be found in \cite[Definition 4.1.2]{Johnson_Yau_2021_2-dimensional_categories}.

A \textbf{2-natural transformation} between 2-functors $F,G: {\sf X} \to {\sf X}'$ is a collection of morphisms $\varphi_X: F(X) \to G(X)$ in ${\sf X}'$ for all objects $X$ of $\sf X$ such that for every morphism $f: X \to X'$ of $\sf X$ the diagram
\[\begin{tikzcd}
	{F(X)} & {F(X')} \\
	{G(X)} & {G(X')}
	\arrow["{G(f)}", from=1-1, to=1-2]
	\arrow["{\varphi_X}"', from=1-1, to=2-1]
	\arrow["{\varphi_{X'}}", from=1-2, to=2-2]
	\arrow["{G(f)}"', from=2-1, to=2-2]
\end{tikzcd}\]
strictly commutes. The notions of 2-functors and 2-natural transformations are completely analogous to the the classical notions of functors and natural transformations in 1-categories. 

\begin{definition}\cite[Definition 6.2.10]{Johnson_Yau_2021_2-dimensional_categories}
A \textbf{2-equivalence} is a 2-functor $F : {\sf X} \to {\sf X}'$, together with a 2-functor $G : {\sf X}' \to {\sf X}$ and 2-natural
isomorphisms
$$
1_{\sf X'} \cong  F \circ G \qquad 1_{\sf X} \cong  G \circ F.
$$
\end{definition}
The Whitehead Theorem for 2-categories \cite[Theorem 7.5.8]{Johnson_Yau_2021_2-dimensional_categories} states that a 2-functor $F: {\sf X} \to {\sf X}'$ is a 2-equivalence if and only if it is 
\begin{itemize}
    \item essentially surjective, i.e. surjective on isomorphism-classes of objects,
    \item fully faithful on morphisms, i.e. bijective on morphisms, and
    \item fully faithful on 2-morphisms, i.e. bijective on 2-morphisms.
\end{itemize}

We will use the language of 2-categories to describe the correspondence between 
various 2-categories of tangent categories and  $\weil$-actegories.  These 2-categories are our most important examples.

\begin{example}[The 2-category of tangent categories]\label{ex:2catsofTan}~
\begin{enumerate}
\item Let  $\TANG_\strong^\lin$ denote the 2-category with
\begin{itemize}
    \item tangent categories as in Definition \ref{def:tangent_cat} as objects,
    \item strong tangent functors as in Definition \ref{def:lax_tangent_functor} as morphisms, and
    \item linear tangent transformations as in Definition \ref{def:tangent_trafo} as 2-morphisms.
\end{itemize}
The 2-category $\TANG_\strong^\lin$ is the 2-category used in Theorem \ref{thm:Garner}. 
\item Let $\TANG_\lax$ denote the 2-category of tangent categories, lax tangent functors as in Definition \ref{def:lax_tangent_functor} and tangent transformations.  The other 2-categories in this example are sub 2-categories of this one.
\item Let $\TANG_\strong$ denote 2-category of tangent categories, strong tangent functors and tangent transformations as in Definition \ref{def:tangent_trafo}.
\item Let $\TANG_\lax^\lin$ denote 2-category of tangent categories, lax tangent functors and linear tangent transformations.
\end{enumerate}
\end{example}

These are the four choices that arise when considering all combinations of  morphisms and transformations from Definitions \ref{def:lax_tangent_functor} and \ref{def:tangent_trafo}.
We need all four combinations because in the next sections, strong functors will lead to differential objects and lax functors will lead to differential bundles. Differential objects and bundles have respective notions of additive and linear morphisms which will correspond to tangent transformations and linear tangent transformations. 

In order to state the correspondence between tangent categories and $\weil$-actegories we also need to have a 2-categorical description of \weil{}-actegories, which is explained in the following example.

\begin{example}\cite[Section 3]{Garner2018:embedding_theorem}
\begin{enumerate}
    \item Let $\weil \aACT_\strong^\lin$ denote the 2-category with
\begin{itemize}
    \item \weil{}-actegories as in Definition \ref{def:actegories} preserving the tangent pullbacks of Definition \ref{def:tangent_cat} (the foundational pullbacks and the universality of the vertical lift) as objects,
    \item strong \weil{}-linear functors as in Definition \ref{def:linear_functors} as 1-morphisms, and
    \item linear \weil{}-natural transformations as in Definition \ref{def:actegory_linear_transformations} as 2-morphisms.
\end{itemize}
\item Let $\weil \aACT_\oplax$ denote the 2-category of \weil-actegories preserving foundational pullbacks and the universality of the vertical lift, oplax \weil{}-linear functors as in Definition \ref{def:linear_functors} and \weil{}-natural transformations. The other 2-categories in this example are sub 2-categories of this one.
\item Let $\weil \aACT_\strong$ denote the 2-category of \weil-actegories preserving foundational pullbacks and the universality of the vertical lift, strong \weil{}-linear functors  and \weil{}-natural transformations as in Definition \ref{def:actegory_linear_transformations}. 
\item Let $\weil \aACT_\oplax^\lin$ denote the 2-category of \weil-actegories preserving foundational  pullbacks and the universality of the vertical lift, oplax \weil{}-linear functors and linear \weil{}-natural transformations. 
\end{enumerate}
\end{example}

For the 2-category $\TANG_\strong^\lin$, the   following theorem, due to Garner \cite{Garner2018:embedding_theorem}, is useful, because it
gives an equivalence of categories. This extends the correspondence between objects given in Theorem \ref{thm:Leung}. 

\begin{theorem}\cite[Theorem 9]{Garner2018:embedding_theorem}\label{thm:Garner}
The 2-category $\TANG_\strong^\lin$ is equivalent to $\weil \aACT_{\strong,t}^\lin$. 
\end{theorem}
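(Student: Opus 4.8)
The plan is to verify the three conditions of the Whitehead Theorem for 2-categories: essential surjectivity on objects, full faithfulness on 1-morphisms, and full faithfulness on 2-morphisms. I would first recall the 2-functor $\Phi : \TANG_\strong^\lin \to \weil \aACT_{\strong,t}^\lin$ underlying the statement. On objects, $\Phi$ sends a tangent category $(\mathbb X, T, p, 0, +, \ell, c)$ to the \weil-actegory whose action functor $\Te : \weil \times \mathbb X \to \mathbb X$ is obtained by unwinding the strong monoidal functor $\tilde T : \weil^\otimes \to \End^\circ(\mathbb X)$ of Theorem \ref{thm:Leung}; on morphisms, it sends a strong tangent functor $(F,\alpha)$ to the strong \weil-linear functor with the same underlying $F$ and a lineator $\tilde\alpha$ built from $\alpha$ by inducting along the generation of every Weil algebra by the dual numbers $W$ (using $A = W^{n_1} \otimes \cdots \otimes W^{n_k}$ from Remark \ref{rem:characterization_of_weil_objects}); on 2-morphisms, $\Phi$ is the identity on the underlying natural transformation, so the only thing to check is that a linear tangent transformation becomes a linear \weil-natural transformation and conversely.

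For \textbf{essential surjectivity}, given a \weil-actegory $(\mathbb X, \Te, \eta, \mu)$ preserving the tangent pullbacks, set $T := \Te(W, -)$ and define $p, 0, +, \ell, c$ as the images under $\Te$ of the morphisms $p, 0, +, \ell, c$ of \weil{} from Remark \ref{rem:double_use_p+0cell}. By Theorem \ref{thm:Leung} (translating the monoidal functor $\tilde T$ into the actegory data and back), this is a tangent structure, and the hypothesis that the actegory preserves the foundational pullbacks and the universality of the vertical lift is exactly the hypothesis needed for $\tilde T$ to satisfy conditions (1) and (2) of Theorem \ref{thm:Leung}; hence $\Phi$ of this tangent category is isomorphic to the given actegory. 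For \textbf{full faithfulness on 1-morphisms}, I would check that the passage $\alpha \mapsto \tilde\alpha$ is a bijection between distributors (Definition \ref{def:lax_tangent_functor}) for $(F, \cdot)$ a strong tangent functor and lineators (Definition \ref{def:linear_functors}) for $(F, \cdot)$ a strong \weil-linear functor. In one direction, restrict $\tilde\alpha$ to the component at $W$ to recover $\alpha$; in the other, the coherence laws for a lineator (the pentagon and unit triangle over the generators $W$, $W \otimes W$, $\dots$, together with naturality in the Weil-algebra variable) force $\tilde\alpha$ to be determined by its component at $W$, and the four compatibility squares in Definition \ref{def:lax_tangent_functor} are precisely the instances of the lineator coherences at the morphisms $p, 0, +, \ell, c$ of \weil{}. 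The preservation-of-tangent-pullbacks clause on both sides matches up directly. For \textbf{full faithfulness on 2-morphisms}, observe that Diagram \ref{diagram:linearity_tangent_trafor} (linearity of a tangent transformation) is exactly the instance of Diagram \ref{diagram:linear_actegory_trafo} (linearity of a \weil-natural transformation) at the object $m = W$, and conversely the general-$m$ diagram follows from the $m = W$ case by the inductive description of the lineators $\tilde\alpha$, $\tilde\alpha'$ and naturality of $\varphi$; since $\Phi$ is the identity on underlying transformations this gives a bijection on 2-morphisms.

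The main obstacle I expect is the bookkeeping in \textbf{full faithfulness on 1-morphisms}: one must show that a lineator $\tilde\alpha : \Te'(-, F-) \Rightarrow F\Te(-,-)$ is \emph{uniquely} reconstructible from a single distributor $\alpha = \tilde\alpha_{W,-}$, which requires (i) extending $\alpha$ over all Weil algebras $A = W^{n_1} \otimes \cdots \otimes W^{n_k}$ using that a strong monoidal/actegory structure turns $\otimes$ into composition and products into pullbacks of functors, (ii) checking this extension is well-defined (independent of the chosen presentation of $A$) using the coherence diagrams, and (iii) verifying that the resulting $\tilde\alpha$ satisfies the lineator pentagon and triangle — which is where the four squares of Definition \ref{def:lax_tangent_functor} and the structure maps of \weil{} must be shown to generate all the needed relations. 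This is essentially Garner's argument for Theorem \ref{thm:Garner}, and since the excerpt says our strategy is to modify Garner's proof, I would organize the write-up as: state the 2-functors, dispatch essential surjectivity and the 2-morphism level quickly (they are immediate from Theorem \ref{thm:Leung} and a diagram comparison), and then spend the bulk of the proof on the 1-morphism level, indicating precisely which of Garner's lemmas carry over verbatim and which need the (minor) adjustment that $\alpha$ need not be invertible is \emph{not} in play here — since both $\TANG_\strong^\lin$ and $\weil\aACT_{\strong,t}^\lin$ use strong functors, this particular theorem is literally Garner's, and the novelty of the paper lies in the lax/non-linear variants treated afterward.
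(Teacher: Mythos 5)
Your proposal is correct and would establish the theorem, but the route you take for the key step --- reconstructing the lineator $\hat\alpha$ from the single distributor $\alpha$ --- is genuinely different from the paper's. The paper follows Garner: given a strong tangent functor $(F,\alpha)$ it forms an auxiliary category $\mathbb K_F$ of triples $(A,B,\tau)$ with $\tau\colon F\circ A \Rightarrow B\circ F$, equips $\mathbb K_F$ with a tangent structure, and applies Theorem \ref{thm:Leung} to \emph{that} category; evaluating the resulting monoidal functor $H\colon\weil\to\End(\mathbb K_F)$ at $(1_{\mathbb X},1_{\mathbb X'},1_F)$ produces the entire family $\hat\alpha_V$ at once, with naturality in $V$ and the lineator coherences delivered by the monoidality of $H$ rather than checked by hand; linearity at the 2-morphism level is handled the same way with a second auxiliary category $\mathbb L_\varphi$. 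You instead propose a direct induction on the presentation $A=W^{n_1}\otimes\cdots\otimes W^{n_k}$, extending $\alpha$ to products via the preserved foundational pullbacks and to tensors via the forced composite, then verifying coherence. This is viable --- it is essentially the strategy the paper itself uses for the concrete functor $\Induce(E)$ in Definition \ref{def:alpha_hat_for_induce} and Appendix \ref{Appendix:naturality}, and uniqueness of the extension is exactly Lemma \ref{lem:lineators_are_unique} --- but it shifts the burden onto an explicit verification that $\hat\alpha$ is natural with respect to \emph{every} morphism of $\weil$, which needs Leung's generators-and-relations description (Lemma \ref{lemma:Leung_on_weil-morphs}) and a substantial diagram chase. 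One small conflation to fix in a write-up: the five squares of Definition \ref{def:lax_tangent_functor} are the naturality squares of $\hat\alpha_{(-)}$ in the Weil-algebra variable at the generating morphisms $p,0,+,\ell,c$; the lineator pentagon and triangle of Definition \ref{def:linear_functors} are a separate condition governing tensor products, satisfied by construction of the tensor extension rather than derived from those squares. Your closing remark is accurate: this statement is literally Garner's theorem, and the paper's own contribution is the adaptation to the lax and non-linear variants in Theorem \ref{thm:general_garner}; what the auxiliary-category approach buys is that the same skeleton transfers to those variants by merely dropping the invertibility of $\tau$, whereas the hands-on induction would have to be redone.
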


In the following we want to consider lax functors and natural transformations because these will encode differential bundles and additive morphisms between them. Therefore we will now do a slight generalization of Theorem \ref{thm:Garner} with different choices for the morphisms and 2-morphisms.

Concretely, we will now extend Theorem \ref{thm:Garner} to $\TANG^\add_\strong$, $\TANG^\add_\lax$ and $\TANG^\lin_\lax$.

\begin{theorem}\label{thm:general_garner}
There are equivalences of 2-categories: 
\begin{align*}
\TANG_\lax &\simeq \weil \aACT_{\oplax}
\\
\TANG_\lax^\lin &\simeq \weil \aACT_{\oplax}^\lin
\\
\TANG_\strong &\simeq \weil \aACT_{\strong}
\end{align*}
\end{theorem}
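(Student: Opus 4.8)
The strategy is to reuse Garner's proof of Theorem \ref{thm:Garner} verbatim wherever possible, isolating the precise places where the strength/laxness of functors and the linearity of transformations enter, and checking that the construction behaves correctly under the looser hypotheses. Recall that Garner's equivalence is realized by a pair of 2-functors: one sends a tangent category $(\mathbb X, T, p, 0, +, \ell, c)$ to the $\weil$-actegory built from Leung's strong monoidal functor $\Te: \weil^\otimes \to \End^\circ(\mathbb X)$ of Theorem \ref{thm:Leung}, and the other sends a $\weil$-actegory $(\mathbb X, \Te, \eta, \mu)$ back to the tangent category whose tangent functor is $\Te(W, -)$ and whose structure maps are the images of $p, 0, +, \ell, c \in \weil$. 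On objects, these assignments are literally unchanged; the content of the theorem is entirely about 1-morphisms and 2-morphisms. So the plan is to carry out the Whitehead-theorem check (essentially surjective, bijective on 1-morphisms, bijective on 2-morphisms) for each of the three claimed equivalences, with essential surjectivity on objects coming for free from Theorem \ref{thm:Leung}.

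First I would treat the correspondence on 1-morphisms. A lax tangent functor $(F, \alpha): (\mathbb X, T) \to (\mathbb X', T')$ has a distributor $\alpha: F T \Rightarrow T' F$ pointing ``forwards''; under Leung's dictionary the tangent functor is $T = \Te(W, -)$, so $\alpha$ is exactly the component at $W$ of a natural family $\alpha_{A, X}: \Te'(A, F(X)) \to F(\Te(A, X))$ — wait, the direction: a \emph{lax} $\weil$-linear functor has lineator $\Te' \circ (1 \times F) \Rightarrow F \circ \Te$, i.e. $\alpha_{A,X}: \Te'(A, FX) \to F(\Te(A,X))$, whereas the tangent distributor goes $FT \Rightarrow T'F$, i.e. at $A = W$ it is $F(\Te(W,X)) \to \Te'(W, FX)$ — the opposite direction. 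That is precisely why the theorem pairs $\TANG_\lax$ with $\weil\aACT_{\oplax}$ rather than $\weil\aACT_{\lax}$, and I would make this bookkeeping explicit at the outset. The key step is then: given the distributor $\alpha$ at $W$, show it extends uniquely to an oplax lineator $\alpha_{A,X}$ for all Weil algebras $A$ in a way compatible with the coherence diagrams. Since every Weil algebra is a finite $\otimes$-product (coproduct) of copies of $W$ (Remark \ref{rem:characterization_of_weil_objects}), and the oplax lineator coherence forces compatibility with $\mu$, the extension is determined by $\alpha_W$ together with the images of the morphisms $p, 0, +, \ell, c$; this is exactly Garner's argument, and the only thing to verify is that his inductive construction never used invertibility of $\alpha$ — it did not, because $\alpha_W$ invertible was only needed to conclude the resulting lineator is a natural isomorphism (the ``strong'' case). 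The commuting diagrams in Definition \ref{def:lax_tangent_functor} for $(F,\alpha)$ are precisely the instances of the oplax coherence laws at the generating morphisms $p, 0, +, \ell, c$, so the translation of conditions is a direct comparison. For the third equivalence $\TANG_\strong \simeq \weil\aACT_\strong$, one additionally observes that ``$\alpha$ a natural isomorphism and $F$ preserves the tangent pullbacks'' matches exactly ``$\alpha$ a natural isomorphism'' in the strong $\weil$-linear setting, since preservation of the tangent pullbacks is built into the definition of the 2-categories $\weil\aACT_{(-),t}$.

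Next, the 2-morphisms. A tangent transformation is just a natural transformation $\varphi: F \Rightarrow F'$ of underlying functors, and a $\weil$-natural transformation is likewise just a natural transformation of underlying functors, so on the nose these are the same data and the bijection is the identity. For the linear variants ($\TANG_\lax^\lin \simeq \weil\aACT_\oplax^\lin$), the linearity square \eqref{diagram:linearity_tangent_trafor} for $\varphi$ involves only $\alpha, \alpha'$ at $W$, while the linear $\weil$-natural transformation condition \eqref{diagram:linear_actegory_trafo} asks for the analogous square at every Weil algebra $A$; so the step is to show that commutativity at $W$ implies commutativity at all $A$, which again follows from the $A = W^{n_1}\otimes\cdots\otimes W^{n_k}$ decomposition together with naturality of $\varphi$ and the already-established coherence of the lineators — this is the same reduction Garner uses in the strong case. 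Assembling: essential surjectivity on objects from Theorem \ref{thm:Leung}; bijectivity on 1-morphisms from the extension-and-restriction argument of the previous paragraph; bijectivity on 2-morphisms from the identifications just described; invoke the Whitehead theorem for 2-categories.

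\textbf{The main obstacle.} I expect the genuinely delicate point to be verifying that Garner's extension of a lineator from $W$ to all Weil algebras, and the verification of the full coherence diagrams of Definition \ref{def:linear_functors}, go through when $\alpha$ is merely a natural transformation (not an isomorphism) and oriented oplax-ly. In the strong case one freely inverts $\alpha$ to move between the two orientations of the coherence hexagon; in the oplax/lax case one must be careful that each coherence diagram is asserted with the arrows composed in the one available direction, and that the tangent-pullback-preservation hypotheses (needed so that $\alpha$ at a product Weil algebra is built from $\alpha_W$ via the pullback $W \times W$, $W \otimes W$ of Theorem \ref{thm:Leung}) are still in force — which is why all four $\weil\aACT$ 2-categories in the relevant example are defined to preserve the tangent pullbacks. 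A secondary nuisance is simply the orientation bookkeeping between ``lax tangent functor'' (distributor $FT \Rightarrow T'F$) and ``oplax $\weil$-linear functor'' (lineator $F\Te \Rightarrow \Te'(1\times F)$); getting the three pairings in the theorem statement to match the conventions of Definitions \ref{def:lax_tangent_functor} and \ref{def:linear_functors} is where an error is most likely to creep in, so I would state the dictionary $\alpha \leftrightarrow \alpha^{-1}$-free correspondence carefully before doing anything else.
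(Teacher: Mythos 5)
Your proposal is correct and follows essentially the same route as the paper: adapt Garner's proof of Theorem \ref{thm:Garner}, observe that invertibility of the distributor is never used in constructing the lineator (only in concluding it is an isomorphism), that the linearity of 2-cells reduces to the $W$-component via the decomposition of Weil algebras, and that dropping linearity makes the 2-cell correspondence trivial. The only cosmetic difference is that where you describe the extension of $\alpha_W$ to all Weil algebras as a bare induction over $W^{n_1}\otimes\cdots\otimes W^{n_k}$, the paper (following Garner) realizes it by applying Leung's Theorem \ref{thm:Leung} to auxiliary tangent categories $\mathbb K'_F$ and $\mathbb L_\varphi$, which packages the coherence checks you defer to ``Garner's argument.''
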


Garner's proof of Theorem \ref{thm:Garner} proceeds by showing that there is a 2-functor between $\TANG_\strong^\lin$ and $\weil \aACT_\strong^\lin$ which provides an essentially surjective correspondence between the objects and an isomorphism between the hom categories.  In the remaining cases (lax or non-linear), Garner's proof will still provide a good framework.  The construction of the equivalence in this case works analogously with only minor differences.  However, the proof that there is a correspondence between 2-morphisms is significantly simplified in the non-linear case:  the correspondence becomes trivial because Diagram \ref{diagram:linear_actegory_trafo} in Definition \ref{def:actegory_linear_transformations} does not need to commute in the non-linear case.

We will show now that lax tangent functors and tangent transformations correspond exactly to lax \weil-linear functors and natural transformations under \ref{thm:Garner}. To prove this we use Garner's correspondence from \cite[Theorem 9]{Garner2018:embedding_theorem}. The following summary of Garner's proof is included in order to provide enough detail so that we can change the functors and natural transformations in Theorem \ref{thm:general_garner} and this change does not impact the proof.

\begin{lemma}\label{lem:lineators_are_unique}
    Let $(\mathbb X, \Te)$ and $(\mathbb X', \Te')$ be objects of $\weil \aACT_\oplax$ and let $F: \mathbb X \to \mathbb X'$ be a functor of underlying categories. Let $\hat \alpha, \hat \beta : F \circ \Te \to \Te' \circ (1_\weil \times F)$ be natural transformation fulfilling the lineator conditions of Definition \ref{def:linear_functors} that coincide on the dual numbers
    $$
    \hat \alpha_W = \hat \beta_W : F \circ \Te(W, -) \Rightarrow \Te'(W, - ) \circ F .
    $$
    Then $\hat \alpha$ and $\hat \beta$ coincide: $\hat \alpha = \hat \beta$.
\end{lemma}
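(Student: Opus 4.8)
The strategy is to leverage the characterization of Weil algebras from Remark~\ref{rem:characterization_of_weil_objects}: every object of $\weil$ can be written as a finite coproduct of copies of the dual numbers $W$, that is, as $W^{\otimes n} := W \otimes \cdots \otimes W$ ($n$ factors), with $W^{\otimes 0} = \mathbb N$ the monoidal unit. So the plan is to prove by induction on $n$ that $\hat\alpha_A = \hat\beta_A$ for every Weil algebra $A = W^{\otimes n}$, and then deduce the equality at an arbitrary object and the naturality of the whole equality.

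First I would handle the base case $n = 0$, i.e.\ $A = \mathbb N = I_\weil$. Here the unitality coherence law in Definition~\ref{def:linear_functors} (the triangle involving $\eta'_{F(X)}$ and $F(\eta_X)$) pins down $\hat\alpha_{I_\weil, X}$ as the unique map making that triangle commute, namely $\hat\alpha_{I_\weil,X} = F(\eta_X) \circ (\eta'_{F(X)})^{-1}$ — note $\eta, \eta'$ are isomorphisms — and the same formula applies to $\hat\beta$, so $\hat\alpha_{I_\weil} = \hat\beta_{I_\weil}$. The case $n = 1$ is exactly the hypothesis $\hat\alpha_W = \hat\beta_W$. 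For the inductive step, write $W^{\otimes(n+1)} = W \otimes W^{\otimes n}$ and use the multiplicativity coherence law (the pentagon-shaped diagram in Definition~\ref{def:linear_functors}) with $m = W$ and $n$ replaced by $W^{\otimes n}$: it expresses $\hat\alpha_{W \otimes W^{\otimes n}, X} \circ F(\mu_{W, W^{\otimes n}, X})$ — hence $\hat\alpha_{W\otimes W^{\otimes n},X}$ itself, since $\mu$ is an isomorphism and $F$ preserves it — as a composite built from $\mu'$, from $\hat\alpha$ at $W$, and from $\hat\alpha$ at $W^{\otimes n}$ (applied at the object $\Te(W^{\otimes n}, X)$ resp.\ $\Te(W, F(\text{--}))$, matching the legs of the diagram). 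By the inductive hypothesis each of these ingredients agrees with the corresponding ingredient for $\hat\beta$, so $\hat\alpha_{W^{\otimes(n+1)}} = \hat\beta_{W^{\otimes(n+1)}}$.

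Since every Weil algebra is isomorphic to some $W^{\otimes n}$ and both $\hat\alpha, \hat\beta$ are natural in the $\weil$-variable, equality on the objects $W^{\otimes n}$ together with naturality in morphisms of $\weil$ forces $\hat\alpha_A = \hat\beta_A$ for \emph{every} object $A$: given any $A$, pick an isomorphism $A \cong W^{\otimes n}$ and chase it through the naturality square for $\hat\alpha$ and for $\hat\beta$. Naturality in the $\mathbb X$-variable is automatic since both are natural transformations with the same (now equal) components, so $\hat\alpha = \hat\beta$ as natural transformations $F \circ \Te \Rightarrow \Te' \circ (1_\weil \times F)$.

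The only real subtlety — the step I would be most careful about — is bookkeeping in the inductive step: one must check that the pentagon coherence diagram genuinely writes the component $\hat\alpha_{W^{\otimes(n+1)}, X}$ in terms of $\hat\alpha_W$ and $\hat\alpha_{W^{\otimes n}}$ at \emph{already-handled} arguments, rather than circularly reintroducing $\hat\alpha$ at $W^{\otimes(n+1)}$; this is exactly the content of Remark~\ref{rem:characterization_of_weil_objects}'s unique decomposition combined with the coherence laws, and it works because the decomposition $W \otimes W^{\otimes n}$ strictly decreases the number of tensor factors on the inner argument. Everything else is a routine diagram chase using that $\mu, \mu', \eta, \eta'$ are isomorphisms and that $F$ preserves isomorphisms.
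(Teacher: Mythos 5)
Your proof has a genuine gap rooted in a misreading of the structure of $\weil$. You claim that every Weil algebra is isomorphic to a tensor power $W^{\otimes n}$ of the dual numbers, and your induction only covers those objects. But by Remark \ref{rem:characterization_of_weil_objects}, the general object of $\weil$ is $W^{n_1}\otimes\cdots\otimes W^{n_k}$, where $W^{n}$ denotes the $n$-fold \emph{product} $W\times\cdots\times W = \mathbb N[x_1,\dots,x_n]/\langle x_ix_j\rangle$, which is not a tensor power of $W$ for $n\geq 2$ (compare ranks: $W^{\otimes n}$ has rank $2^n$ while $W\times W$ has rank $3$). So your concluding step --- ``pick an isomorphism $A\cong W^{\otimes n}$ and chase it through the naturality square'' --- starts from a false premise, and the objects $W^{n_1}\otimes\cdots\otimes W^{n_k}$ with some $n_i\geq 2$ are simply never reached by your argument.

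What is missing is precisely the step the paper supplies for product powers: since $(\mathbb X,\Te)$ and $(\mathbb X',\Te')$ preserve the foundational pullbacks, $\Te'(W^n,F(X))$ is a wide pullback of copies of $\Te'(W,F(X))$ over $F(X)$, and naturality of $\hat\alpha$ in the $\weil$-variable with respect to the projections $\pi_i\colon W^n\to W$ shows that each composite of $\hat\alpha_{W^n,X}$ with a projection equals $\hat\alpha_{W,X}\circ F(\Te(\pi_i,X))$; the universal property of the pullback then forces $\hat\alpha_{W^n}=\hat\beta_{W^n}$ from $\hat\alpha_W=\hat\beta_W$. Your handling of the tensor-product step via the multiplicativity coherence law is correct and matches the paper, and the unit case is fine, but without the pullback argument for $W^n$ the induction does not exhaust $\weil$. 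Once you add that step, combining it with your tensor-product step (applied to tensor products of the $W^{n_i}$ rather than of single copies of $W$) recovers the paper's proof.
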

\begin{proof}
    Every object $V$ of \weil{} is a coproduct of powers of the dual numbers, $V = W^{n_1} \otimes ... \otimes W^{n_k}$. 
    Since $\Te$ and $\Te'$ preserve foundational pullbacks and morphisms into pullbacks are uniquely determined by their components, $\hat \alpha_W = \hat \beta_W$ implies $\hat \alpha_{W^n} = \hat \beta_{W^n}$. Thus it only remains to show that the equality is also preserved for tensor products of Weil algebras. For that let $V$ and $V'$ be objects of \weil{} such that $\hat \alpha_V =  \hat \beta_V$ and $\hat \alpha_{V'} =  \hat \beta_{V'}$. Then the lineator condition from Definition \ref{def:linear_functors} means that
    \[\begin{tikzcd}
    	{F \circ \Te(V \otimes V', -)} & {F \circ \Te(V,-) \circ \Te(V', -)} \\
    	& {\Te'(V,-) \circ F \circ \Te(V', -)} \\
    	{\Te'(V \otimes V', -) \circ F} & {\Te'(V,-) \circ \Te'(V', -) \circ F}
    	\arrow["\cong"', from=1-1, to=1-2]
    	\arrow["{\hat \alpha_{V\otimes V'}}"', from=1-1, to=3-1]
    	\arrow["{\hat \alpha_V \circ \Te'(V',-)}", from=1-2, to=2-2]
    	\arrow["{\Te(V,-) \circ \alpha_{V'}}", from=2-2, to=3-2]
    	\arrow["\cong"', from=3-2, to=3-1]
    \end{tikzcd}\]
    commutes (and the same for $\hat \beta_{V \otimes V}$). Thus $\hat \alpha_{V \otimes V}$ and $\hat \beta_{V \otimes V}$ are completely determined by their $V$ and $V'$ components and therefore coincide. Since every object of $\weil$ is a tensor product of powers of $W$ this implies that $\hat \alpha$ and $\hat \beta$ coincide.
\end{proof}

\begin{proof}[Proof (of Theorem \ref{thm:Garner})]
To prove Theorem \ref{thm:Garner}, Garner explicitly constructs a 2-functor 
$$
E: \TANG_\strong^\lin \to \weil \aACT_{\strong,t}^\lin
$$
and verifies that it is an equivalence. The components of $E$ are defined as follows.

On objects,  $E$ must send a tangent category to a \weil-actegory. By Theorem \ref{thm:Leung}, a tangent structure on $\mathbb X$ is given by a functor
$$
T : \weil \to \End (\mathbb X)
$$
The transpose of this functor is the functor associated to a \weil-actegory structure
$$
\Te : \weil \times \mathbb X \to \mathbb X
$$
where $\Te (A,x) = T(A)(x)$. Since $T$ was monoidal, this is an actegory, and this defines $E(\mathbb{X})$. 

On morphisms $E$ must send a (strong) tangent functor $(F, \alpha): (\mathbb X, T) \to (\mathbb X', {T'})$ to a (strong) functor of \weil-actegories $(\hat F, \hat \alpha): (\mathbb X, \Te) \to (\mathbb X', \Te')$. Define $\hat F = F: \mathbb X \to \mathbb X'$. The natural transformation $\hat \alpha: F \circ \Te \Rightarrow \hat {T'} \circ (1 \times F)$ is defined more subtly using an intermediary category whose construction we now review.  

Let $\mathbb K_F$ be the 1-category whose objects are triples $(A,B, \tau)$ with $A \in \mathrm{End}(\mathbb X), B \in \mathrm{End}(\mathbb X')$ and $\tau: F \circ A \Rightarrow B \circ F$ a natural isomorphism. The morphisms $(\varphi, \psi): (A,B, \tau) \to (A',B', \tau')$ of $\mathbb K_F$ are pairs of natural transformations $\varphi: A \Rightarrow A'$ and $\psi: B \Rightarrow B'$ satisfying $\tau'* F(\varphi) = \psi_F * \tau$.  There is a tangent structure on $\mathbb K_F$ given by a tangent functor $T_{\mathbb K_F}:\mathbb K_F\to \mathbb K_F$ defined as follows:
$$
T_{\mathbb K_F}(A, B , \tau) := (T \circ A , {T'} \circ B , {T'}(\tau) * \hat \alpha_A  )
$$
and defined on morphisms using $T$ and $T'$ in a similar way.
By Theorem \ref{thm:Leung} this induces a functor $H: \weil \to \End(\mathbb K_F)$. The first two components of $T_{\mathbb K_F}$ are the precomposition with $T$ or ${T'}$ respectively, so the first two components of $H(V)$ for any Weil algebra, $V$, are given by the precomposition with $\Te$ and $\hat {T'}$.  
In particular for the object $(1_\mathbb X, 1_{\mathbb X'}, 1_F)\in \mathbb K_F$ and a Weil algebra $V$, there exists a natural transformation $\hat \alpha_V: F \circ \Te(V,-) \Rightarrow {T'}(V,-) \circ (F \times 1_{\weil})$ such that
$$
H(V)(1_\mathbb X, 1_{\mathbb X'}, 1_F) = (\Te (V,-) , \hat {T'} (V,-), \hat \alpha_V).
$$ 
For a morphism $\theta: V \to V'$ in \weil, we have $H(\theta)=(\Te(\theta, - ), \Te'(\theta, - ))$ such that $\hat \alpha_{V'} * \Te(\theta, - ) = \Te'(\theta, - ) \circ \hat \alpha_V   $
Therefore, the natural transformation $\hat \alpha_V$ is natural in $\weil$.  Thus, we have produced a natural transformation $\hat \alpha: F\circ\Te \to \Te'\circ (1\times F)$.
The functor $E$ sends a tangent functor $(F,\alpha_F)$ to the \weil-linear functor $(F, \hat \alpha)$ so defined. This is an object of $\mathbb K_F$, thus $\hat \alpha$ is an isomorphism, which means that strong tangent functors are sent to strong \weil-linear functors. One can prove that $\hat \alpha$ fulfills the lineator conditions using that $H$ is monoidal in order to expand $H(V \otimes V')(1_\mathbb X, 1_{\mathbb X'},1_F)$. The formula for the tangent structure $T_{\mathbb K_F}$ then forces the third component to be $\Te'(V, \hat \alpha_{V'}) * \hat \alpha_V$ proving that the lineator property in Definition \ref{def:linear_functors} holds.

To complete the definition of $E$, we must define $E$ on 2-morphisms.  For a 2-morphisms $\varphi: (F, \alpha) \Rightarrow (F', \alpha')$ in $\mathrm{TAN}_\strong^\lin$,  we must construct a natural transformation $F \Rightarrow F'$ fulfilling the linearity condition in Diagram \ref{diagram:linear_actegory_trafo}. We choose $E(\varphi):= \varphi$ as the natural transformation.
Garner indicates that a construction similar to the morphism argument using the auxiliary category $\mathbb K_F$ can be used to show that the linearity of $\varphi$ in the sense of Definition \ref{def:tangent_trafo} implies linearity of $E(\varphi)=\varphi$ in the sense of Definition \ref{def:actegory_linear_transformations}. In order to point out the modifications needed for the proof of Theorem \ref{thm:general_garner}, we will make this analogy explicit. The analogue to $\mathbb K_F$ is an intermediary category $\mathbb L_\varphi$, defined next. 

Suppose 
$\varphi:(F,\alpha) \to (F',\alpha')$ is a 2-morphism of $\TANG_\lax^\lin$, i.e. a linear tangent transformation. Then we define the 1-category $\mathbb L_\varphi$ whose objects are triples $(V,a,b)$, where $V\in\mathrm{Obj}(\weil )$ is a Weil algebra, $a: F \circ \Te(V, -) \Rightarrow \hat {T'}(V,-)\circ F$ and $b: F' \circ \Te(V, -) \Rightarrow \Te'(V,-)\circ F'$ are natural transformations such that  
\[\begin{tikzcd}
	{F \circ \Te(V, -) } & {\hat {T'}(V,-)\circ F} \\
	{F' \circ \Te(V, -) } & {\hat {T'}(V,-)\circ F'}
	\arrow["{\hat {T'}(V,-)\circ \varphi}", from=1-2, to=2-2]
	\arrow["\varphi"', from=1-1, to=2-1]
	\arrow["b"', from=2-1, to=2-2]
	\arrow["a", from=1-1, to=1-2]
\end{tikzcd}\] commmutes. A morphism $(V,a,b)\to(V',a',b')$ of $L_\varphi$ is a \weil -morphism $\gamma: V \to V'$ such that the diagrams
\[\begin{tikzcd}
	{F \circ \Te(V, -)} & {\hat {T'}(V,-)\circ F} && {F' \circ \Te(V, -)} & {\hat {T'}(V,-)\circ F'} \\
	{F \circ \Te(V', -)} & {\hat {T'}(V',-)\circ F} && {F' \circ \Te(V', -)} & {\hat {T'}(V',-)\circ F'}
	\arrow["a", from=1-1, to=1-2]
	\arrow["{a'}"', from=2-1, to=2-2]
	\arrow["{F\circ\Te(\gamma, - )}"', from=1-1, to=2-1]
	\arrow["{\hat {T'}(\gamma,-)\circ F}", from=1-2, to=2-2]
	\arrow["{\hat {T'}(\gamma,-)\circ F'}", from=1-5, to=2-5]
	\arrow["{\hat {T'}(\gamma,-)\circ F'}"', from=1-4, to=2-4]
	\arrow["{b'}"', from=2-4, to=2-5]
	\arrow["b", from=1-4, to=1-5]
\end{tikzcd}\]
commute.

Given objects $(V,a,b)$ and $(V',a',b')$ of $\mathbb L_\varphi$ such that the product $V \times V'$ exists in \weil, the diagram
\begin{equation}\label{diagram:VtimesV}
\begin{tikzcd}
	{(V \times V' , a \times_{1_F}a' , b \times_{1_{F'}}b')} & {(V',a',b')} \\
	{(V,a,b)} & {(\mathbb N , 1_F, 1_{F'})}
	\arrow["{\pi_1}"', from=1-1, to=1-2]
	\arrow["{\pi_0}", from=1-1, to=2-1]
	\arrow["{!}", from=1-2, to=2-2]
	\arrow[""{name=0, anchor=center, inner sep=0}, "{!}"', from=2-1, to=2-2]
	\arrow["\lrcorner"{anchor=center, pos=0.125}, draw=none, from=1-1, to=0]
\end{tikzcd}
\end{equation}
is a pullback in $\mathbb L_\varphi$. 
The tuple $(V \times V' , a \times_{1_F}a' , b \times_{1_{F'}}b')$ is a well defined object of $\mathbb L_\varphi$ because $\Te$ and $\Te'$ preserve foundational pullbacks and therefore 
$$
F \circ \Te(V \times V' , -) = F \circ \Te(V,-) \times F \circ \Te(V', -) \text{ and }  \Te'(V \times V' , -) \circ F = \Te'(V,-) \times F \circ \Te'(V', -) \circ F.
$$
The diagram is a diagram in $\mathbb L_\varphi$ because $a \times_{1_F} a'$ and $b \times_{1_{F'}} b'$ are compatible with the projection maps $\pi_0$ and $\pi_1$.
The diagram fulfills the universal property because any induced map into $(V \times V')$ is component-wise compatible with $a,a',b$ and $b'$ and thus compatible with $a \times a'$ and $b \times b'$.

There is a tangent structure on $\mathbb L_\varphi$ whose tangent functor, $T:\mathbb L_\varphi \to \mathbb L_\varphi$, is defined by tensoring with the dual numbers $W$:
$$
T(V, a, b) = (W \otimes V, {T'}(a) \circ \alpha , {T'}(b) \circ \alpha ').
$$
Here $T'(a) \circ \alpha$ is shorthand for the composite
$$
F \circ \Te (W \otimes V , -) \xrightarrow{\cong} F \circ T \circ \Te(V, -) \xrightarrow{\alpha} {T'} \circ F \circ \Te (V, -) \xrightarrow{{T'}(a)} {T'} \circ \hat {T'} (V, -) \circ F \xrightarrow{\cong} \hat {T'} (W \otimes V, -) \circ F,
$$
which includes the identification $\Te(W, -)$ with $T$ as in Remark \ref{remark:T_vs_hat_T}.  We define $T'(b)\circ \alpha'$ in the same way.
The square
\[\begin{tikzcd}
	{F \circ T \circ \Te(V,-)} & {T \circ F \circ \Te(V,-)} & {T \circ \Te(V,-) \circ F} \\
	{F' \circ T \circ \Te(V,-)} & {T \circ F' \circ \Te(V,-)} & {T \circ \Te(V,-) \circ F'}
	\arrow["\alpha", from=1-1, to=1-2]
	\arrow["{T (a)}", from=1-2, to=1-3]
	\arrow["{\alpha'}"', from=2-1, to=2-2]
	\arrow["{T(b)}"', from=2-2, to=2-3]
	\arrow["\varphi"', from=1-1, to=2-1]
	\arrow["{T(\varphi)}"', from=1-2, to=2-2]
	\arrow["{T(\Te(1_V,\varphi))}", from=1-3, to=2-3]
\end{tikzcd}\]
commutes because $\varphi: (F,\alpha) \to (F',\alpha')$ is a linear natural transformation (left square commutes) and $(V,a,b)$ is an object of $\mathbb L_\varphi$ (right square commutes).

In the proof of \cite[Theorem 9]{Garner2018:embedding_theorem}, Garner states that the projection from $\mathbb K_F$ to $\End(\mathbb X)$ preserves the tangent structure. Analogously, here the projection $\mathbb L_\varphi \to \weil$ preserves the tangent structure which is given by the tensor product with Weil algebras. This means that all foundational pullbacks in $\mathbb L_\varphi$ are pullbacks in the $\weil$-component. The reason for this is that, when combining the pullback diagram of \ref{diagram:VtimesV} with the monoidal structure of \weil,
foundational pullbacks in \weil{} remain pullbacks in $\mathbb L_\varphi$ when taking the tensor product with a constant object.

Since $\mathbb L_\varphi$ is a tangent category Theorem \ref{thm:Leung} gives us a functor $G_\varphi: \weil \to \End(\mathbb L_\varphi)$ which we sometimes also denote as $G_\varphi: \weil \times \mathbb L_\varphi \to \mathbb L_\varphi$. Since $G_\varphi$ preserves foundational pullbacks, the first component of $G_\varphi(W^n , (V,a,b))$ will be $W^n \otimes V$. Since $G_\varphi: \weil \to \End(\mathbb L_\varphi)$ is monoidal and every object of $\weil$ is a tensor product of powers of the dual numbers $W$, for any Weil-algebras $V,V'$, the first component of $G_\varphi(V, (V',a,b))$ will be $V \otimes V'$. Let 
$$
G_\varphi(V,(\mathbb N, 1_F, 1_{F'})) = (V, \tilde \alpha_V, \tilde \alpha_V')
$$
Then $\tilde \alpha_V : F \circ \Te(V, -) \Rightarrow \Te'(V,-) \circ F$ and $\tilde \alpha_V' : F' \circ \Te(V, -) \Rightarrow \Te'(V,-) \circ F'$ are natural in $V$ because $G_\varphi$ is a functor. In order to show that $\tilde \alpha$ and $\tilde \alpha'$ fulfill the lineator condition of Definition \ref{def:linear_functors} we can expand $G_\varphi(V \otimes V', (\mathbb N, 1_F, 1_{F'}))$ using monoidality into $G_\varphi(V , G_\varphi( V', (\mathbb N, 1_F, 1_{F'}))) = G_\varphi(V , (V', \tilde \alpha_{V'} ,\tilde \alpha_{V'}' ))$ and the tangent structure $T$ on $\mathbb L_\varphi$ together with the fact that every object of $\weil$ is built from $W$ then shows that 
$$
(V \otimes V' ,\tilde \alpha_{V \otimes V'} ,\tilde \alpha_{V \otimes V'}' ) = G_\varphi(V \otimes V', (\mathbb N, 1_F, 1_{F'})) = (V \otimes V' , \Te'(V,\tilde \alpha_{V'})*\tilde \alpha_V , \Te'(V,\tilde \alpha_{V'}')*\tilde \alpha'_V )
$$
proving the lineator property of Definition \ref{def:linear_functors}.

Since 
$$
G_\varphi(W,(\mathbb N, 1_F, 1_{F'})) = (W \otimes \mathbb N,T'( 1_F ) \circ \alpha , T'(1_{F'}) \circ \alpha') = (W, \alpha , \alpha'),
$$
the components $\hat \alpha_W = \alpha = \tilde \alpha_W$ and $\hat \alpha_W' = \alpha' = \tilde \alpha_W'$. Thus Lemma \ref{lem:lineators_are_unique} shows that for any Weil-algebra $V$, $\hat \alpha_V = \tilde \alpha_V$ and $\hat \alpha_V' = \tilde \alpha_V'$.

Therefore 
$$
G_\varphi(V,(\mathbb N, 1_F, 1_{F'})) = (V, \hat \alpha_V, \hat \alpha_V'),
$$
is an object of $\mathbb L_\varphi$ which implies that the diagram
\[\begin{tikzcd}
	{F \circ \Te(V, -) } & {\hat {T'}(V,-)\circ F} \\
	{F' \circ \Te(V, -) } & {\hat {T'}(V,-)\circ F'}
	\arrow["{\hat {T'}(V,-)\circ \varphi}", from=1-2, to=2-2]
	\arrow["\varphi"', from=1-1, to=2-1]
	\arrow["{\hat \alpha'}"', from=2-1, to=2-2]
	\arrow["{\hat \alpha}", from=1-1, to=1-2]
\end{tikzcd}\]
commutes. Thus $\varphi$ is a linear natural transformation of actegories, so we may define $E(\varphi)=\varphi$.

Garner's proof concludes by showing that $E$ is essentially surjective on objects and fully faithful on morphisms and 2-morphisms.

It is essentially surjective on objects because of Theorem \ref{thm:Leung}. 

It is full on morphisms because for any strong \weil-linear functor $(F,\hat \alpha)$, the $W$-component $(F, \hat \alpha_W)$ is a strong tangent functor such that $E(F,\hat \alpha_W) = (F, \hat \alpha)$.

It is faithful on morphisms because $E(F, \alpha) = (F, \hat \alpha)$ where $\hat \alpha_W = \alpha$.  Thus, two different strong tangent functors $(F, \alpha)$ and $(F',\alpha')$ will be sent to two \weil-linear functors $(F,\hat \alpha)$ and $(F', \hat \alpha')$ which are different because the $W$ component of $\hat \alpha$ is $\alpha$ and analogous for $\hat \alpha'$.

It is full on 2-morphisms because the $W$ component of the linearity condition of a \weil-natural transformation implies the linearity condition of a tangent transformation.

It is faithful on 2-morphisms because the data of a 2-morphisms is the same in $\TANG_\strong^\lin$ and $\weil \aACT^\lin_{\strong,t}$.

\end{proof}
\begin{proof}[Proof (of Theorem \ref{thm:general_garner})] 
There are three cases to prove, and in each case we adapt Garner's proof as necessary to account for the differences between the 2-categories which are involved.  We take each of them in turn.

\paragraph*{Tangent Categories with lax functors and linear transformations}
In order to prove that $\TANG^\lin_\lax \cong \weil \aACT^\lin_{\oplax}$, we need to relax the condition of strong morphisms to oplax morphisms. We need to define a functor 
$$
E': \TANG^\lin_{\lax} \to \weil\aACT^\lin_{\oplax}
$$
and show that $E'$ is essentially surjective on objects, fully faithful on morphisms and fully faithful on 2-morphisms.
Let $(\mathbb X, T)$ be an object of $\TANG^\lin_\lax$.
The functor $E'$ is defined on objects by letting $E'(\mathbb X)$ be the Weil-actegory whose Weil action is given by the adjoint  $\Te: \weil \times \mathbb X \to \mathbb X$ of the tangent functor $T: \weil \to \End (\mathbb X)$, exactly as was the case for the functor $E$ in the proof of Theorem \ref{thm:Garner}.

As in that proof, 
given a lax tangent functor $(F,\alpha):(\mathbb X, T)\to (\mathbb X', {T'})$ let
$\mathbb K'_F$, to be the category  of triples $(A \in \End (\mathbb X), B \in \End (\mathbb X') , \tau : F \circ A \Rightarrow B \circ F)$ with 
morphisms $(\varphi, \psi): (A,B, \tau) \to (A',B', \tau')$ of $\mathbb K_F'$ given by pairs of natural transformations $\varphi: A \Rightarrow A'$ and $\psi: B \Rightarrow B'$ satisfying $\tau'* F(\varphi) = \psi_F * \tau$. 
The only difference between $\mathbb K_F$ and $\mathbb K'_F$ is that $\tau$ is not necessarily an isomorphism. 
The functor $T_{\mathbb K_F'}: \mathbb K_F' \to \mathbb K_F'$ given by
$$
T_{\mathbb K_F'}(A, B , \tau) := (T \circ A , {T'} \circ B , {T'}(\tau) \circ \alpha_A ).
$$
is a tangent bundle functor.  
This is because the only difference from the proof of Theorem \ref{thm:Garner} is that in that proof, $\tau$ and $\alpha$ needed to be isomorphisms.  Since this did not contribute to the tangent structure, the same proof works in this case.
This tangent structure gives rise to a monoidal functor $H: \weil \to \End(\mathbb K_F)$, and evaluation of this functor at the object $(1_\mathbb X, 1_{\mathbb X'}, 1_F)\in \mathbb K_F$ gives us, for each Weil algebra $V$, 
$$
H(V)(1_\mathbb X, 1_{\mathbb X'}, 1_F) = (\Te (V,-) , \hat {T'} (V,-), \hat \alpha_V)
$$ where $\hat \alpha_V: F \circ \Te(V,-) \Rightarrow {T'}(V,-) \circ (F \times 1_{\weil})$ is a natural transformation. The functor $E'$ sends $(F,\alpha)$ to $(F, \hat \alpha)$ so defined. Since $\hat \alpha$ is the third component of an object of $\mathbb K_F'$, it is not required to be an isomorphism. This means that lax tangent functors are sent to lax \weil-linear functors. The functor $E'$ is defined on 2-morphisms exactly like in the original proof (of Theorem \ref{thm:Garner}).

The difference to the proof of Theorem \ref{thm:Garner} is that we don't just allow strong \weil{}-linear functors but also lax \weil{}-linear functors as morphisms in the target. Therefore every object in $\weil\aACT^\lin_\oplax$ is still isomorphic the image of an object of $\TANG_\lax^\lin$. (It even is isomorphic by a strong functor which in particular is a lax functor). Thereby $E'$ is automatically essentially surjective on objects. It also is fully faithful on 2-morphisms for the very same reasons as in the proof of \ref{thm:Garner}.

Next we need to show that $E'$ is fully faithful on morphisms. So let $\mathbb X$ and $\mathbb X'$ be
tangent categories and $(F, \hat \alpha) : E(\mathbb X) \to  E(\mathbb X')$ a map of the corresponding \weil -actegories.
The maps $\alpha(W,-)$ constitute a natural transformation $F (\Te(W, - )) \Rightarrow  \hat {T'} F (-)$ which, since
$\Te(W,-) \cong T$ in both domain and codomain, determines and is determined by one $\alpha: F \circ  T \Rightarrow  T' \circ F$.

\paragraph*{Tangent categories with strong functors} In order to prove that $\TANG_\strong \cong \weil \aACT_{\strong,t}$, we need to relax the condition of linear natural transformations to any natural transformations.  We need to define a functor 
$$
E'': \TANG_\strong \to \weil \aACT_{\strong,t}
$$
and prove that it is essentially surjective on objects, fully faithful on morphisms and fully faithful on 2-morphisms.
On objects it is defined through the adjunction and on morphisms through the category $\mathbb K_F$, like $E$ from the  proof of Theorem \ref{thm:Garner}.

The only substantial difference to the proof of Theorem \ref{thm:Garner} is on 2-morphisms. Since the transformations are not linear, we choose $E''(\varphi) = \varphi$, but now we do not need to verify linearity.

We defined $E''$ to be the identity on 2-morphisms, thus $E''$ is automatically fully faithful on 2-morphisms. It is essentially surjective on objects and fully faithful on morphisms because on objects and morphisms it is the same as $E$ from the original proof (of Theorem \ref{thm:Garner}).

\paragraph*{Tangent categories with lax functors} In order to prove that $\TANG_\lax \cong \weil \aACT_{\oplax}$ we need to relax both conditions, the strong functors to lax functors and the linear natural transformations to general natural transformations. We need to define a functor 
$$
E''': \TANG_\lax \to \weil \aACT_{\oplax}
$$
and prove that it is essentially surjective on objects, fully faithful on morphisms and fully faithful on 2-morphisms. We define $E'''$ to be the same as $E'$ on objects and morphisms and the same as $E''$ on 2-morphisms. From the analogous results about $E'$ and $E'$ it now follows that $E'''$ is essentially surjective, fully faithful on morphisms and fully faithful on 2-morphisms.

\end{proof}

\begin{remark}\label{rem:tanCat_as_actegories}
Inspired by Theorem \ref{thm:general_garner} we will from now on describe tangent categories through their \weil-actegory structure $(\mathbb X, \Te: \weil \times \mathbb X \to \mathbb X)$, lax/strong tangent functors through their corresponding colax/strong \weil-linear functors and (linear) tangent transformations through their (linear) \weil-natural transformations. When describing a tangent category through its \weil-actegory structure, we call it a \textbf{tangent actegory}.

Recall from Remark \ref{rem:actegory_notation_index}, that given a \Weil{} algebra $A$ we will denote the functor $\Te(A, -): \mathbb X \to \mathbb X$ by $\T_A$. In addition, given a tangent actegory $(\mathbb X,\Te)$ we say $(E,M,q,\zeta, \sigma, \lambda)$ is a differential object in the \weil{}-actegory $(\mathbb X,\Te)$ if it is a differential object in the corresponding tangent category $(\mathbb  X, \T_W)$.
\end{remark}

\subsection{The tangent actegory $\mathbb N^\bullet$}
In this section we will study the category $\mathbb N^\bullet$ from Definition \ref{def:N_bullet} with the tangent structure $(\mathbb N^\bullet, D)$ defined in Example \ref{ex:tan_structure_on_N_bullet}. The tangent category $\mathbb N^\bullet$ is the universal tangent category with a differential object, in the sense that it will classify differential bundles and differential objects in Theorem \ref{thm:equivalence_bundle_categories}. 

Due to Theorem \ref{thm:Garner} this tangent structure has a corresponding \weil-actegory structure via via the functor $D_\bullet$ satisfying 
$$D_\bullet(A, \mathbb N^k)= \underline A \otimes \mathbb N^k,$$ where $\underline A$ is the underlying additive monoid of $A$. Under the correspondence of Theorem \ref{thm:Garner} this encodes the tangent structure of Example \ref{ex:tan_structure_on_N_bullet}, because 
$$
D_\bullet(W, \mathbb N^k) = W \otimes \mathbb N^k \cong \mathbb N^k \times \mathbb N^k.
$$
One can check that the images of the maps in \weil{} also provide the $p, 0, + , \ell$ and $c$ from Example \ref{ex:tan_structure_on_N_bullet}.
as well. Equivalently this can be seen as an example of Leung's Theorem \ref{thm:Leung}. The functor $\weil \to \Fun(\mathbb N^\bullet , \mathbb N^\bullet)$ is the adjoint of the tensor product sending the Weil algebra $A$ to the endofunctor $\underline A \otimes  -$.

However, the equivalences in Theorems \ref{thm:Garner} and \ref{thm:general_garner} do not only provide a correspondence on objects. They also give a correspondence on morphisms and 2-morphisms. We will showcase this correspondence for the functor $F: \mathbb N^\bullet \to \SmMan$ from Example \ref{ex:functor N bullet to smooth}.

\begin{example}\label{ex:functor_N_to_smooth_as_weil_linear_functor}
The functor $F: \mathbb N^\bullet \to \SmMan$ from Example \ref{ex:functor N bullet to smooth} that sends $\mathbb N^k$ can be seen as a strong $\weil$-linear functor of actegories in the following way: The underlying functor is the functor from Example \ref{ex:functor N bullet to smooth}. 
For the lineator we choose the identity:
$$
\hat \alpha_{W^{n_1} \otimes ... \otimes W^{n_k} , \mathbb N^m} = 1:
F(D_\bullet(W^{n_1} \otimes ... \otimes W^{n_k} , \mathbb N^m)) \to \Te(W^{n_1} \otimes ... \otimes W^{n_k} , F(\mathbb N^m)).
$$
We can do this because the domain can be simplified into
$$
F(D_\bullet(W^{n_1} \otimes ... \otimes W^{n_k} , \mathbb N^m)) = 
F(\mathbb N^{m \cdot (n_1+1) \cdot ... \cdot (n_k+1)}) = 
\mathbb R^{m \cdot (n_1+1) \cdot ... \cdot (n_k+1)}.
$$
The target can be simplified into 
$$
\Te(W^{n_1} \otimes ... \otimes W^{n_k} , F(\mathbb N^m)) = \Te(W^{n_1} \otimes ... \otimes W^{n_k} , \mathbb R^m) = \mathbb R^{m \cdot (n_1+1) \cdot ... \cdot (n_k+1)}.
$$
As the domain and the target coincide, we can choose the lineator $\hat \alpha$ to be the identity.
\end{example}
This example is important since the main result of this paper, the classification in Theorem \ref{thm:equivalence_bundle_categories}, will state that such functors exactly correspond to differential bundles.

Recall from Definition \ref{def:N_bullet} that morphisms in $\mathbb N^\bullet$ from $\mathbb{N}^m$ to $\mathbb{N}^n$ are given by matrices with components $f = (f_{ij})_{1\leq i\leq n ,1\leq j \leq m}$ where $f_{ij} \in\mathbb N$ for all $i$, $j$.
Lemma \ref{lemma:morph_in_N_bullet} will now re-write the application of a matrix as an iterated sum.

For this we use the additive bundle structure $\zeta: \mathbb N^0 \to \mathbb N^1$ and $\sigma: \mathbb N^2 \to \mathbb N^1$ on $\mathbb N^1$ from Example \ref{ex:N_bullet_differential}. Let $\sigma_k : \mathbb N^k \to \mathbb N^1$ be defined recursively by $\sigma_0 = \zeta : \mathbb N^0 \to \mathbb N^1$ and $\sigma_k = \sigma \circ (1_\mathbb N \times \sigma_{k-1}) : \mathbb N^k \to \mathbb N^1$. Recall that $\Delta_k: \mathbb N \to \mathbb N^k$ is the diagonal.

\begin{lemma}\label{lemma:morph_in_N_bullet}
Let 
$$
f = (f_{ij})_{1\leq i\leq n ,1\leq j \leq m} : \mathbb{N}^m \to \mathbb{N}^n
$$
be an arbitrary morphism in $\mathbb N^\bullet$. Then $f$ can be decomposed into components
$$
f = \langle f_1 , ... , f_n\rangle
$$
with components $f_i : \mathbb N^m \to \mathbb N$ that are given by composition
$$
f_i = \sigma_m \circ \langle \sigma_{f_{i1}} \circ \Delta_{f_{i1}} \circ \pi_1 , ... , \sigma_{f_{im}} \circ \Delta_{f_{im}} \circ \pi_m \rangle
$$
\end{lemma}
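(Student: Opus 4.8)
The plan is to pass to matrices and verify the identity by matrix multiplication, using that composition in $\mathbb N^\bullet$ is matrix multiplication (Definition~\ref{def:N_bullet}) and that the product $\mathbb N^n=(\mathbb N^1)^n$ is the one from Example~\ref{ex:tan_structure_on_N_bullet}.

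First I would record the matrices of the pieces appearing in the statement. A morphism $\mathbb N^m\to\mathbb N^n$ is an $n\times m$ matrix over $\mathbb N$, and from the universal property of the product in $\mathbb N^\bullet$ one reads off immediately that the $j$-th projection $\pi_j\colon\mathbb N^m\to\mathbb N^1$ is the $1\times m$ row vector $e_j^{\mathsf T}$ with a single $1$ in position $j$; that the diagonal $\Delta_k\colon\mathbb N^1\to\mathbb N^k$ is the $k\times 1$ all-ones column $\mathbf 1_k$; and that a tuple $\langle g_1,\dots,g_l\rangle$ of maps $\mathbb N^m\to\mathbb N^1$ is the $l\times m$ matrix obtained by stacking the rows $g_1,\dots,g_l$. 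The only piece that needs an actual argument is the identification of $\sigma_k\colon\mathbb N^k\to\mathbb N^1$ with the $1\times k$ all-ones row $\mathbf 1_k^{\mathsf T}$, which I would prove by induction on $k$ using the recursion $\sigma_0=\zeta$, $\sigma_k=\sigma\circ(1_{\mathbb N}\times\sigma_{k-1})$: the base case $k=0$ is the empty $1\times 0$ matrix coming from $\zeta\colon\mathbb N^0\to\mathbb N^1$, and in the inductive step $\sigma$ is the $1\times 2$ row $(1\ 1)$ while $1_{\mathbb N}\times\sigma_{k-1}$ has block matrix $\left(\begin{smallmatrix}1&0\\0&\mathbf 1_{k-1}^{\mathsf T}\end{smallmatrix}\right)$, whose product with $(1\ 1)$ is $\mathbf 1_k^{\mathsf T}$.

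With these in hand, the right-hand side is assembled by three matrix multiplications. First, $\sigma_{f_{ij}}\circ\Delta_{f_{ij}}\circ\pi_j$ has matrix $\mathbf 1_{f_{ij}}^{\mathsf T}\,\mathbf 1_{f_{ij}}\,e_j^{\mathsf T}=(f_{ij})\,e_j^{\mathsf T}$, i.e.\ the $1\times m$ row that is $f_{ij}$ in position $j$ and $0$ elsewhere; in particular $\sigma_k\circ\Delta_k$ is multiplication by $k$, valid also for $k=0$. Stacking these $m$ rows, $\langle\sigma_{f_{i1}}\circ\Delta_{f_{i1}}\circ\pi_1,\dots,\sigma_{f_{im}}\circ\Delta_{f_{im}}\circ\pi_m\rangle$ is the diagonal matrix $\operatorname{diag}(f_{i1},\dots,f_{im})$. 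Precomposing with $\sigma_m=\mathbf 1_m^{\mathsf T}$ gives the $1\times m$ row $(f_{i1},\dots,f_{im})$, which is exactly the $i$-th row $\pi_i\circ f=f_i$ of $f$; and $\langle f_1,\dots,f_n\rangle$ stacks these rows back into $f$, which is the asserted decomposition.

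Since every step is a one-line matrix identity, there is no real obstacle; the only place demanding any care is the inductive identification of $\sigma_k$ with the all-ones row, where one has to track the block form of $1_{\mathbb N}\times\sigma_{k-1}$ and treat the degenerate object $\mathbb N^0$ correctly. One could equivalently argue entirely in $\mathbf{Set}$ on underlying functions---$\pi_j$ reads the $j$-th coordinate, $\Delta_k$ repeats it $k$ times, $\sigma_k$ sums $k$ coordinates, so the composite sends $(x_1,\dots,x_m)$ to $\sum_{j=1}^m f_{ij}x_j$, which is the action of the $i$-th row of $f$---but the matrix formulation is cleaner since it never leaves $\mathbb N^\bullet$.
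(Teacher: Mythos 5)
Your proof is correct. It takes a slightly different, more intrinsic route than the paper: the paper's own proof argues semantically, interpreting the composite as the map $v \mapsto \sum_{j=1}^m f_{ij}\cdot v_j$ on ``vectors'' and matching that against the usual application of the matrix $f$ --- essentially the $\mathbf{Set}$-style argument you sketch in your closing paragraph --- whereas you identify each building block ($\pi_j$, $\Delta_k$, $\sigma_k$, tupling) with an explicit matrix and verify the identity purely by matrix multiplication inside $\mathbb N^\bullet$. The two arguments have the same mathematical content, but yours is somewhat more careful: you supply the induction identifying $\sigma_k$ with the all-ones row from the recursion $\sigma_k=\sigma\circ(1_{\mathbb N}\times\sigma_{k-1})$, and you handle the degenerate case $f_{ij}=0$ (where $\sigma_0\circ\Delta_0$ factors through $\mathbb N^0$) explicitly, both of which the paper leaves implicit. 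What the paper's version buys is brevity and a direct link to the intuition ``this is just how one applies a matrix to a vector''; what yours buys is a verification that never leaves the categorical/matrix formalism of Definition \ref{def:N_bullet}, so nothing needs to be said about what an ``element'' of $\mathbb N^m$ is.
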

\begin{proof}
    This is just a way to formulate the application of a matrix $f = (f_{ij})_{1\leq i\leq n ,1\leq j \leq m}$ to a vector $v$
    $$
    f(v)_i = \sum_{j=1}^m f_{ij} \cdot v_j
    $$
    using the addition from Example \ref{ex:N_bullet_differential}. We first replace the multiplication with scalars by a sum of constant terms
    $$
    \sum_{j=1}^m f_{ij} \cdot v_j = \sum_{j=1}^n \sum_{r = 1}^{f_{ij}} v_j
    $$
    and then replace the sums with $\sigma$ to obtain
    \begin{align*}
    \sum_{j=1}^n \sum_{r = 1}^{f_{ij}} v_j &= \sigma_m \circ \langle \sigma_{f_{i1}}(v_1, ... , v_1) , ... , \sigma_{f_{im}}(v_m, ... , v_m)\rangle 
    \\&= \sigma_m \circ \langle \sigma_{f_{i1}}\circ \Delta_{f_{i1}} (v_1) , ... , \sigma_{f_{im}}\circ \Delta_{f_{im}} (v_n)\rangle
    \\&= \sigma_m \circ \langle \sigma_{f_{i1}}\circ \Delta_{f_{i1}} \circ \pi_1, ... , \sigma_{f_{im}}\circ \Delta_{f_{im}}\circ \pi_n\rangle (v).
    \end{align*}
    
\end{proof}

\section{\Nice{} functors}

This section introduces \nice{} functors from on tangent actegory, $(\mathbb X, T_\bullet)$, to another, $(\mathbb X', T'_\bullet)$.  Like a tangent functor, a differential functor will consist of a functor $F:\bbX\to \bbX'$ together with a natural transformation $\hat \alpha:F\circ T \Rightarrow T'\circ (1\times F)$, but $F$ and $\hat \alpha$ will be required to preserve additional structure (as in Definition \ref{def:differential_functors}). The key difference between  differential functors and the tangent functors of Definition \ref{def:lax_tangent_functor} or the \weil{}-linear functors of Definition \ref{def:linear_functors} is that differential functors preserve differential bundles (Definition \ref{def:differential bundle}) and bundle morphisms (Definition \ref{def:differential_bundle_morphisms}). 

We construct a category of differential functors and differential natural transformations.  In light of the results of Section 3, we construct differential functors from \weil{}-linear functors and we use Weil-actegories in place of the tangent categories of Definition \ref{def:tangent_cat}. In light of the Theorems \ref{thm:Garner} and \ref{thm:general_garner} , we use \weil{}-actegories as our model of tangent categories, and differential functors will be \weil{}-linear functors with additional structure. We begin this section by providing a definition of what it means to be a differential bundle in a Weil-actegory by defining it to be what corresponds to Definition \ref{def:differential bundle} under the correspondence of Theorem \ref{thm:general_garner}.  In Section 3, we considered strong and lax \weil{}-linear morphisms as well as tangent transformations and linear tangent transformations. This begs the question of what choices we should make in defining both objects and morphisms of ``the'' category of differential functors.  Instead of making a single choice, we will define all four possible categories as sub-categories of the category of morphisms in the 2-category $\TANG_{\lax}$ and denote them $\DiffFun, \DiffFun^\strong, \DiffFun_\lin$ and $\DiffFun^\strong_\lin$.

Finally, we will establish the fact that both lax and strong differential functors send differential objects to differential bundles in Proposition \ref{prop:Dobj_to_Dbun}.  Given a lax differential functor $(F, \alpha):(\bbX, T) \to (\bbX', T')$, we define an evaluation functor $\Eval:\DObj(\bbX) \times \DiffFun(\mathbb X , \mathbb X') \to \DBun(\bbX')$ which sends a functor $F$ and a differential object $A$ to the bundle obtained by evaluating $F$ at the object $A$ over  $F(*)$, where $*$ is the terminal object of $\mathbb X$.

Analogous statements hold for $\DiffFun^\strong, \DiffFun_\lin$ and $\DiffFun^\strong_\lin$.

The fact that this assignment is functorial is established in Propopositions \ref{prop:Dobj_to_Dbun}, \ref{prop:preserve_differential_structure}, \ref{prop:eval_on_morphisms} and Theorem \ref{thm:Eval_Dobj_DiffFun_DBun}. Each of the four propositions is for one of the four different choices of categories of \nice{} functors. Its correspondence to the classical evaluation functor $\mathrm{eval}: {\mathbb X \times \Fun(\mathbb X , \mathbb X')} \to {\mathbb X'}$, which sends $ A,F $ to $ F(A)$, makes the diagram

\[\begin{tikzcd}
	{\DObj(\mathbb X) \times\DiffFun(\mathbb X, \mathbb X')} & {\DBun(\mathbb X')} \\
	{\mathbb X \times \Fun(\mathbb X , \mathbb X')} & {\mathbb X'}
	\arrow["\Eval", from=1-1, to=1-2]
	\arrow[hook, from=1-1, to=2-1]
	\arrow["{\pi_E}", from=1-2, to=2-2]
	\arrow["{\mathrm{eval}}", from=2-1, to=2-2]
\end{tikzcd}\] commute.
Here the functor $\pi_E$ sends a differential bundle $(E,M,q,\zeta,\sigma,\lambda)$ to the total space of the bundle, $E$.

There are variations of the functor $\Eval$ whose target is either $\DBun(\bbX')$ or $\DObj(\bbX')$, depending on whether $(F, \alpha)$ is lax or strong. All the variations of $\Eval$ are lined up in Corollary \ref{cor:eval_difffun_to_dbun_dobj}.

Theorem \ref{thm:Eval_Dobj_DiffFun_DBun} and Corollary \ref{cor:eval_difffun_to_dbun_dobj} give us a way to produce new differential bundles in $\mathbb X'$ out of differential objects in $\mathbb X$.  We conclude this section with the Example \ref{ex:given_functor_from_N_bullet_get_diffobj} using the tangent category $(\mathbb{N}^\bullet, T)$ of Example \ref{ex:tan_structure_on_N_bullet}.  In subsequent sections, we will show that in fact every differential bundle in $\bbX$ is obtained from $\mathbb N^\bullet$.

 We begin by identifying objects of a \weil-actegory which are differential objects in the corresponding tangent category.  The following definition makes this precise. Recall Definitions \ref{def:differential bundle} and \ref{def:differential_bundle_morphisms} for the definition of a differential bunle in a tangent category.

Recall from Remark \ref{rem:actegory_notation_index} that we denote the functor $\Te(A, -): \mathbb X \to \mathbb X$ by $\T_A$.
\begin{definition}\label{def:weil-differential bundles}
    Let $(\mathbb X, \Te: \weil \times \mathbb X \to \mathbb X)$ be a tangent actegory, and let $A$ be a  Weil algebra.
    \begin{enumerate}
        \item A tuple $(E,M,q:E\to M,\zeta:M\to E, \sigma:E_2\to E, \lambda:E\to \T_W(E))$ is a \textbf{\weil-differential bundle} in $(\mathbb X,\Te)$ if it is a differential bundle in the sense of Definition \ref{def:differential bundle} in the corresponding tangent category $(\mathbb  X, \T_W)$.  We will abbreviate $(E, M, q, \zeta, \sigma, \lambda)$ to $(E,M)$ when the name of the structure maps are not needed.
        \item Given \weil{}-differential bundles $(E,M)$ and $(E',M')$, a pair of maps $f: E \to E', g:M\to M'$ is a \textbf{\weil-differential bundle morphism} if $(f,g)$ is a differential bundle morphism in the corresponding tangent category $(\mathbb  X, \T_W)$.
        \item A \weil-differential bundle morphism is linear/additive if it is linear/additive in the corresponding tangent category $(\mathbb  X, \T_W)$.
    \end{enumerate}
Let $\weil \DDBun(\bbX)$, $\weil \DDBun_{\add}(\bbX)$ and $\weil \DDBun_{\lin}(\bbX)$ be the categories of \weil-differential bundles with any, linear or additive morphisms.
Denote $\weil \DDObj(\bbX)$, $\weil \DDObj_{(\add)}(\bbX)$ and $\weil \DDObj_\lin(\bbX)$ the full subcategories of \weil-differential objects with any, linear or additive morphisms. 
\end{definition}
\begin{lemma}\label{lemma:weil-diffbundles_vs_diffbundles}
    Let $(\bbX, \Te)$ be a tangent actegory with corresponding tangent category $(\bbX, \T_W)$.  There are isomorphisms of categories 
    $$
    \DBun(\bbX,\T_W) \cong \weil\DDBun(\bbX,\Te)~~~,~~~
    \DObj(\bbX,\T_W) \cong \weil\DDObj(\bbX,\Te)~,~
    $$$$
    \DBun_\add(\bbX,\T_W) \cong \weil\DDBun_\add(\bbX,T)~,~
    \DObj_\add(\bbX,\T_W) \cong \weil\DDObj_\add(\bbX,\Te)~,~
    $$$$
    \DBun_\lin(\bbX,\T_W) \cong \weil\DDBun_\lin(\bbX,\Te)~,~
    \DObj_\lin(\bbX,\T_W) \cong \weil\DDObj_\lin(\bbX,\Te)~,~
    $$.
\end{lemma}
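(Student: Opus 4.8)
The plan is to observe that Lemma \ref{lemma:weil-diffbundles_vs_diffbundles} is essentially a bookkeeping statement: by Definition \ref{def:weil-differential bundles}, the notion of a \weil-differential bundle in the tangent actegory $(\bbX, \Te)$ (and of its linear or additive morphisms, and of a \weil-differential object) is \emph{defined} to be that of an ordinary differential bundle (resp.\ morphism, object) in the associated tangent category $(\bbX, \T_W)$. So each of the six claimed isomorphisms will be witnessed by the identity assignment on objects and morphisms, and the only thing to check is that this assignment is well defined and functorial in both directions, which is immediate once the definitions are unwound.

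Concretely, I would first recall from Remark \ref{rem:tanCat_as_actegories} (via Theorems \ref{thm:Leung} and \ref{thm:general_garner}) that the tangent category corresponding to $(\bbX, \Te)$ has the same underlying category $\bbX$, tangent functor $\T_W = \Te(W, -)$, and structure transformations $\Te(p,-), \Te(0,-), \Te(+,-), \Te(\ell,-), \Te(c,-)$ obtained by applying $\Te$ to the morphisms of \weil{} listed in Remark \ref{rem:double_use_p+0cell}; this is a fixed, well-defined tangent category, and it is the one referenced throughout Definition \ref{def:weil-differential bundles}. Hence part (1) of that definition says that a tuple $(E,M,q,\zeta,\sigma,\lambda)$ with $\lambda : E \to \T_W(E)$ is an object of $\weil\DDBun(\bbX, \Te)$ precisely when it is an object of $\DBun(\bbX, \T_W)$, and parts (2)--(3) say that a pair $(f,g)$ is a (resp.\ linear, additive) morphism in one category precisely when it is one in the other. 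Thus the two categories have literally the same objects and the same morphisms; since composition and identities in both are those of $\bbX$, the identity assignment is a functor that is its own inverse, giving $\DBun(\bbX, \T_W) \cong \weil\DDBun(\bbX, \Te)$, and the same argument verbatim gives the additive and linear versions $\DBun_\add(\bbX,\T_W)\cong\weil\DDBun_\add(\bbX,\Te)$ and $\DBun_\lin(\bbX,\T_W)\cong\weil\DDBun_\lin(\bbX,\Te)$.

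For the differential-object versions I would note that in either picture $\DObj$, $\DObj_\add$, $\DObj_\lin$ (resp.\ $\weil\DDObj$, $\weil\DDObj_\add$, $\weil\DDObj_\lin$) are, by Definition \ref{def:names_of_categories_involved} (resp.\ Definition \ref{def:weil-differential bundles}), the full subcategories of the corresponding bundle categories spanned by the bundles whose base $M$ is the terminal object of $\bbX$ --- and $\bbX$ has the same terminal object no matter which tangent structure it is equipped with. Restricting the three isomorphisms already obtained to these full subcategories therefore yields the remaining three isomorphisms. I do not anticipate any genuine obstacle here: the content of the lemma is precisely that the notation introduced in Definition \ref{def:weil-differential bundles} and the notation of Definition \ref{def:names_of_categories_involved} name the same categories, and the only point requiring (trivial) care is confirming that the tangent category $(\bbX, \T_W)$ appearing in Definition \ref{def:weil-differential bundles} is indeed the one assigned to $(\bbX, \Te)$ by Theorem \ref{thm:general_garner}, which holds by construction.
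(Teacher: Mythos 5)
Your proposal is correct and takes essentially the same approach as the paper: the paper's proof likewise observes that by Definition \ref{def:weil-differential bundles} the two sides have identical data, so identity-on-data functors in both directions are mutually inverse. Your version simply spells out the bookkeeping (including the restriction to terminal-base objects) in more detail.
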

\begin{proof}
By Definition \ref{def:weil-differential bundles} there are functors in both directions that do not change the data of an object or a morphism. The fact that these functors are inverse to one another is immediate.
\end{proof}
Lemma \ref{lemma:weil-diffbundles_vs_diffbundles} allows us to slightly abuse terminology by using the word ``differential bundle'' for objects on both sides of the equivalence (using the conventions from Remarks \ref{remark:T_vs_hat_T} and \ref{rem:tanCat_as_actegories}). In particular we will also use $\DBun(\bbX,\Te)$ to denote $\weil\DDBun_\add(\bbX,T)$.

\begin{definition}[Strong and lax \nice{} functors]\label{def:differential_functors}
Suppose $(\mathbb X, \Te)$ and $(\mathbb X' , \Te')$ are \weil{}-actegories 
and suppose that that $\mathbb X$ has a terminal object $*$. 
\begin{enumerate}[label = (\alph*)]
    \item A \textbf{lax \nice{} functor} from $(\mathbb X, \Te)$ to $(\mathbb X' , \Te')$ is an oplax \weil{}-linear functor, 
    $
    (F:\mathbb X \to \mathbb X',\hat \alpha: F \circ \Te \Rightarrow \Te' \circ (1\times F) ),
    $
    such that for all objects $A$ of \weil{},
    \begin{enumerate}
        \item $T' _A \circ F$  preserves pullbacks over the terminal object $*$, and 
        \item $\hat \alpha_{A, -}$ is Cartesian, i.e. for each $f\in \Hom_{\mathbb X}(X,Y)$, the naturality diagram 
        \[\begin{tikzcd}
        	{F \circ \T_A(X)} && {\T_A'\circ F(X)} \\
        	{F \circ \T_A(Y)} && {\T_A' \circ F(Y)}
        	\arrow["{\hat \alpha_{A,X}}", from=1-1, to=1-3]
        	\arrow["{F \circ \T_A(f)}"', from=1-1, to=2-1]
        	\arrow["{\T_A' \circ F(f)}", from=1-3, to=2-3]
        	\arrow[""{name=0, anchor=center, inner sep=0}, "{\hat \alpha_{A,Y}}", from=2-1, to=2-3]
        	\arrow["\lrcorner"{anchor=center, pos=0.125}, draw=none, from=1-1, to=0]
        \end{tikzcd}\]
        for the natural transformation $\hat \alpha_{A, -}:F \circ \T_A \Rightarrow \T'_A \circ F$
        is a pullback.
    \end{enumerate}
    \item A \textbf{strong \nice{} functor} is a lax \nice{} functor
    such that $F(*)$ is a terminal object in $\mathbb X'$.
\end{enumerate}
\end{definition}
If the tangent bundle of the terminal object is terminal, $T(*) \cong *$ in both $\mathbb X$ and $\mathbb X'$,
a consequence of this definition is that a strong \nice{} functor is a strong \WEIL -linear functor,
i.e. the natural transformation $\alpha$ is actually a natural isomorphism. Let $*_\mathbb X$ (resp. $*_{\mathbb X'}$ denote the terminal object in $\mathbb X$ (resp. $\mathbb X'$). The reason a strong \nice{} functor is a strong \Weil-linear functor is that the naturality diagram of $ \alpha$ with respect to $!:X \to *_\mathbb X$
\[\begin{tikzcd}
	{F(\T_A(X))} && {\T'_A(F(X))} \\
	{*_{\mathbb X'} = F(\T_A(*_\mathbb X))} && {\T'_A(F(*_\mathbb X)) = *_{\mathbb X'}}
	\arrow["{!}", from=1-3, to=2-3]
	\arrow["{!}"', from=1-1, to=2-1]
	\arrow["{ \hat \alpha_{(A,X)}}", from=1-1, to=1-3]
	\arrow["{1_*}"', from=2-1, to=2-3]
\end{tikzcd}\]
is a pullback and thus $\hat \alpha_{(A,X)}$ is an isomorphism.
\begin{example}~
Let $(\SmMan{},T)$ be the tangent category of smooth manifolds with smooth maps. It was defined in Example \ref{ex:smooth_manifolds_vs} using the axiomatic formulation of tangent categories in Definition \ref{def:tangent_cat}. It can be turned into a \weil-actegory using Theorem \ref{thm:general_garner}).
\begin{enumerate}[label = (\alph*)]
\item 
Let $(-)^2: \SmMan{}\to \SmMan{}$ denote the squaring functor.  For smooth manifolds $M$ and $N$ and a smooth map $f:M\to N$, this functor satisfies $(-)^2(M)=M^2=M\times M$ and $(-)^2(f)=f\times f$.  This functor,  together with the canonical isomorphism $\alpha:T(M \times M) \to T(M) \times T(M)$, is a tangent functor functor as in Definition \ref{def:lax_tangent_functor}. Under Theorem \ref{thm:general_garner} it corresponds to a \weil-linear functor $(F,\hat \alpha)$ which is a strong differential functor.
\item 
Let $S^1 \times -$ be the product with the circle functor. For smooth manifolds $M$ and $N$ and a smooth map $F:M \to N$, this functor satisfies $(S^1 \times -)(M) = S^1 \times M$ and $(S^1 \times -)(f) = 1_{S^1} \times f$.
This functor, together with the map $\beta: = 0 \times 1: S^1 \times T(M) \to T(S^1) \times T(M) \cong T(S^1 \times M)$, is a tangent functor and thus corresponds to a \weil-linear functor $(F, \hat \beta)$. It is a lax \nice{} functor but not a strong \nice{} functor because it sends the terminal object $*$ to $S^1 \times * \cong S^1$ which is not terminal.
\end{enumerate}
An important difference between these examples is that $S^1 \times * \ncong *$ whereas $* \times * \cong *$.  This showcases the difference between a lax differential functor such as $S^1\times -$ and a strong differential functor such as $(-)^2$.
\end{example}

The main goal of this section is to construct an evaluation functor
$$
\Eval : \DObj_\add(\mathbb X) \times \DiffFun(\mathbb X , \mathbb X') \to \DBun_\add(\mathbb X')
$$
in Theorem \ref{thm:Eval_Dobj_DiffFun_DBun}, that sends a differential object and a \nice{} functor to a differential bundle. The explicit definition of the category \DiffFun{} is in Definition \ref{def:DiffFun}.

The next three propositions show (in order) that this assignment is  well-defined on objects, that it is well-defined on morphisms of $\DObj$ and that it is well-defined on morphisms of \DBun.

First we describe the assignment on objects. Recall notation convention for differential objects from Remark \ref{rem:DiffObj}.
\begin{proposition}\label{prop:Dobj_to_Dbun}
    Let $(\bbX, \Te)$ and $(\bbX',   \Te')$ be tangent actegories and let $(F, \hat{\alpha}): (\bbX, \Te)\to (\bbX', \Te')$ be an oplax \weil-linear functor.  Let $(X, \sigma, \zeta, \lambda)$ be a differential object in $\bbX$.
    \begin{enumerate}[label = (\alph*)]
        \item If $(F, \hat{\alpha})$ is a lax differential functor, then then $(F(X), F(\ast), F(!), F(\sigma), F(\zeta), \hat{\alpha}_X\circ F(\lambda))$ is a differential bundle.
        \item If $(F, \hat{\alpha})$ is a strong differential functor, then then $(F(X), F(\ast), F(!), F(\sigma), F(\zeta), \hat{\alpha}_X\circ F(\lambda))$ is a differential object.
    \end{enumerate}
\end{proposition}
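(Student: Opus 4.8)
The plan is to verify the clauses of Definition~\ref{def:differential bundle} in turn for the tuple $(F(X),F(*),q',F(\zeta),F(\sigma),\lambda')$, where $q':=F(!)$ and $\lambda':=\hat\alpha_{W,X}\circ F(\lambda)$ (the $\hat\alpha_X$ of the statement being the component $\hat\alpha_{W,X}$ at the dual numbers). Four facts about a lax \nice{} functor $(F,\hat\alpha)$ carry the argument. (1) Taking $A=\mathbb N$ in the first condition of Definition~\ref{def:differential_functors} and using $\Te'(\mathbb N,-)=1_{\mathbb X'}$, the functor $F$ itself preserves pullbacks over $*$. (2) Taking $A=W^{\otimes k}$ and using $\Te'(W^{\otimes k},-)=(T')^{k}$, the functor $(T')^{k}\circ F$ preserves pullbacks over $*$. (3) By the second condition, every naturality square of $\hat\alpha_{A,-}$ is a pullback. (4) Since $(F,\hat\alpha)$ is an oplax \weil-linear functor, the lineator coherence of Definition~\ref{def:linear_functors} evaluated on the morphisms $p,0,+,\ell,c$ of \weil{} says precisely that $\hat\alpha_{W,-}$ is compatible with $p',0',+',\ell',c'$ as in Definition~\ref{def:lax_tangent_functor}.

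The routine clauses follow from (1), (2) and (4). The pullback powers $X_n$ of $!\colon X\to*$ are iterated pullbacks over $*$, so by (1) they are carried to the pullback powers $F(X)_n$ of $q'$ with $F(\pi_i)=\pi_i$, and by (2) each $(T')^{k}$ preserves them --- this is clause~3. The additive-bundle axioms of Definition~\ref{def:additive_bundle} for $(q',F(\zeta),F(\sigma))$ are diagrams built from $\zeta,\sigma$, the projections and the pairing maps of $X_2$ and $X_3$, so applying $F$ to the corresponding $\mathbb X$-axioms for $(!,\zeta,\sigma)$ yields them, together with $q'\circ F(\sigma)=F(!\circ\sigma)=F(!\circ\pi_0)=q'\circ\pi_0$ and $q'\circ F(\zeta)=1_{F(*)}$ --- this is clause~4. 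For clause~5 one applies $F$ to each of the two additive-bundle-morphism squares for $\lambda$ and rewrites the $F(T_W(-))$-terms as $T'_W(F(-))$-terms using the naturality of $\hat\alpha_{W,-}$ at the relevant morphism and the compatibility of $\hat\alpha$ with $p'$, $+'$ and $0'$, keeping track of whether the ambient fibre product is the pullback power of $p_X$ (target $(T_WX,X,p_X,+_X,0_X)$) or of $T_W(!)$ (target $(T_WX,T_W(*),T_W(!),T_W\sigma,T_W\zeta)$); the same device, with the $\ell$-compatibility of $\hat\alpha$ and its naturality at $\lambda\colon X\to T_WX$, turns $\ell_X\circ\lambda=T_W(\lambda)\circ\lambda$ into $\ell'_{F(X)}\circ\lambda'=T'_W(\lambda')\circ\lambda'$.

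The main obstacle is the universality pullback, Diagram~\ref{diagram:universality}. Writing $\nu_X\colon X_2\to T_WX$ for the top edge of the $\mathbb X$-universality square of $X$, one checks that the candidate top edge for $F(X)$ equals $\hat\alpha_{W,X}\circ F(\nu_X)$ and that its lower edge is $0'_{F(*)}=\hat\alpha_{W,*}\circ F(0_*)$, so that the candidate square is the horizontal pasting of the naturality square of $\hat\alpha_{W,-}$ at $!\colon X\to*$ --- a pullback by (3) --- with the $F$-image of the $\mathbb X$-universality square. By the pasting law it then suffices to show that $F$ carries the $\mathbb X$-universality pullback of $X$ to a pullback. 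This is the crux and does not follow from (1) alone, since that square is a pullback over $T_W(*)$ rather than over $*$; the plan is to establish it by verifying the universal property of the image square directly, combining the genuine pullback $F(X)\times_{F(*)}F(X)=F(X)_2$ supplied by (1), the Cartesianness (3) of $\hat\alpha_{W,-}$ at $!$ (which yields $F(T_WX)\cong T'_W(F(X))\times_{T'_W(F(*))}F(T_W(*))$) and at the other structure maps, and the additive-bundle identities for $\lambda'$, $F(\sigma)$, $F(\zeta)$ already obtained --- in effect re-running inside $\mathbb X'$ the argument that the universality square characterises the lift of a differential object. Finally, part~(b) is immediate: if $(F,\hat\alpha)$ is strong then $F(*)$ is terminal in $\mathbb X'$, so $q'=F(!)$ is the unique map to a terminal object and the differential bundle produced in~(a) is a differential object.
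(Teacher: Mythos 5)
Your proof follows the paper's own argument essentially step for step: the same reduction of clauses 3--4 of Definition~\ref{def:differential bundle} to preservation of pullbacks over $*$ and functoriality, the same verification of clause~5 via the compatibility of $\hat\alpha_{W,-}$ with $p',0',+',\ell'$ together with its naturality, and the same decomposition of the candidate universality square as the horizontal pasting of the $F$-image of the universality square of $X$ with the (Cartesian, hence pullback) naturality square of $\hat\alpha_{W,-}$ at $!\colon X\to *$. All of that is correct and matches the paper.

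The one point of divergence is the step you yourself single out as the crux: showing that $F$ carries the universality pullback of $X$ to a pullback. The paper dispatches this in one line, asserting that the left square is a pullback ``since $F$ preserves pullbacks over the terminal object''; you are right to be suspicious, since that square has base $T_W(*)$ (only its bottom-left corner is $*$), and condition~(a) of Definition~\ref{def:differential_functors} speaks only of pullbacks whose base is $*$. However, your proposed repair is left as a plan rather than executed, and the ingredients you list do not obviously close the loop: Cartesianness of $\hat\alpha_{W,-}$ at $!$ gives $F(T_WX)\cong T'_W(F(X))\times_{T'_W(F(*))}F(T_W(*))$, which reduces the problem to showing that $F$ preserves the pullback $T_WX\times_{T_W(*)}*$ --- i.e.\ exactly the pullback over $T_W(*)$ you started with. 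So as written your argument is incomplete at precisely this step; in fairness, it is also the step at which the paper's own proof is thinnest, and your diagnosis of why it is delicate is more precise than the justification the paper offers. A complete account would either supply the direct universal-property verification you sketch, or add a hypothesis (e.g.\ $T_W(*)\cong *$, which holds in the intended application $\mathbb X=\mathbb N^\bullet$) under which the universality square of a differential object really is a pullback over the terminal object.
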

\begin{proof}
\begin{enumerate}[label=(\alph*)]
\item 

Let $(\mathbb X , T , p , 0 , +, \ell)$ and $(\mathbb X' , T' , p' , 0' , +', \ell')$ be the tangent structures corresponding to the tangent actegories $(\mathbb X, T_\bullet)$ and $(\mathbb X', T_\bullet')$. Denote $\hat \alpha_W$ by $\alpha : F \circ T \Rightarrow T' \circ F$. 

To prove the statement, we simply need to check that $(F(X), F(\ast), F(!), F(\sigma), F(0), \hat{\alpha}_X \circ F(\lambda))$ satisfies Definition \ref{def:differential bundle} in the case where the tangent functor is given by $T'_W$. 
Finite pullback powers of $F(!): F(X) \to F(*)$ exist and the $k$-th pullback power equals $F(X^k)$ and $F = \T'_\mathbb N \circ F$ preserves pullbacks over $*$ by Definition \ref{def:differential_functors} .
In order to show that these pullbacks are preserved by the $n$-th power of the tangent functor $T' = \T'_W$ we need to show that
\[\begin{tikzcd}
	{T'^n(F( X^2))} & {T'^n(F(X))} \\
	{T'^n(F(X))} & {T'^n(F(*))}
	\arrow[from=1-2, to=2-2]
	\arrow[from=2-1, to=2-2]
	\arrow[from=1-1, to=2-1]
	\arrow[from=1-1, to=1-2]
	\arrow["\lrcorner"{anchor=center, pos=0.125}, draw=none, from=1-1, to=2-2]
\end{tikzcd}\]
is a pullback. This is the case as $T'^n \circ F=\T'_{W^{\otimes n}} \circ F$ preserves pullbacks over the terminal object.

The remainder of Properties 1-4 in Definition \ref{def:differential bundle} follows from functoriality.
In order to show Property 5, we need to show that
$(\hat \alpha_X \circ F(\lambda),0')$ is an additive bundle morphism of type
$$ 
(F(X) , F(*) , F(!) , F(\sigma) , F(\zeta) ) \to (T'(F(X)), T'(F(*)),T'(F(!)) ,  T'(F(\sigma)), T'(F(\zeta)) )
$$ 
and that $(\hat \alpha_X \circ F(\lambda), F(\zeta)) $ is an additive bundle morphism of type
$$
(F(X) , F(*) , F(!) , F(\sigma) , F(\zeta) ) \to (T'(F(X)), F(X), p'_{F(X)} , +'_{F(X)} , 0'_{F(X)}) .
$$
This means we need to show that the three diagrams of Definition \ref{def:additive_bundle_morphism} commute for each of these morphisms.  Each of them will follow from functoriality of $F$, naturality of $\alpha$ and the commuting diagrams in Definition \ref{def:lax_tangent_functor}.

To show that $(\alpha \circ F(\lambda),0')$ preserves the projection, we must show that $T'F(!)\circ (\alpha_X\circ F(\lambda)=0_{\mathbb X'}\circ F(!)$. We proceed as follows:
\begin{align*}
T'(F(!)) \circ \alpha \circ F(\lambda) &= \alpha \circ F(T(!)) \circ F(\lambda) 
\\&= \alpha \circ F(0) \circ F(!) 
\\&= 0' \circ F(!)
\end{align*}
To show that $(\alpha \circ F(\lambda),0')$ preserves the addition, we must show that $T'(F(\sigma)) \circ ( \alpha \circ F(\lambda) )_2 = \alpha \circ F(\lambda) \circ T'(F(\sigma))$. We proceed as follows:
\begin{align*}
T'(F(\sigma)) \circ \langle \alpha \circ F( \lambda) \circ \pi_0, \alpha \circ F(\lambda) \circ \pi_1 \rangle 
&= \alpha \circ F(T'(\sigma)) \circ \langle F( \lambda) \circ \pi_0, F(\lambda) \circ \pi_1 \rangle 
\\&= \alpha \circ F(T'(\sigma) \circ \langle  \lambda \circ \pi_0, \lambda \circ \pi_1 \rangle )
\\&= \alpha \circ F(\lambda) \circ F(\sigma)
\end{align*}
To show that $(\alpha \circ F(\lambda),0')$ preserves the zero section, we must show that $T'(F(\zeta)) \circ 0' =\alpha \circ F(\lambda) \circ F(\zeta) $. We proceed as follows:
\begin{align*}
T'(F(\zeta_X)) \circ 0'  & = T'(F(\zeta_X)) \circ \alpha \circ F(0_\mathbb X) 
\\ & = \alpha \circ F(T(\zeta_X)) \circ F(0_\mathbb X) 
\\ & = \alpha \circ F(\lambda_X) \circ F(\zeta_X) 
\end{align*}
To show that the additive bundle morphism $(\alpha \circ F(\lambda), F(\zeta))$ preserves the projection, we must show that $ p \circ \alpha \circ F(\lambda) =  F(\zeta) \circ F(!)$. We proceed as follows:
\begin{align*}
p' \circ \alpha \circ F(\lambda) & = F(p) \circ F(\lambda)
\\& = F(\zeta) \circ F(!) 
\end{align*}
To show that the additive bundle morphism $(\alpha \circ F(\lambda), F(\zeta))$ preserves the addition, we must show that
$+' \circ (\alpha \circ F(\lambda))_2= \alpha \circ F(\lambda) \circ F(\sigma) $. We proceed as follows:
\begin{align*}
+  \circ \langle \alpha \circ F(\lambda) \circ \pi_0,\alpha \circ  F(\lambda) \circ \pi_1  \rangle 
& = \alpha \circ F(+) \circ \langle F(\lambda) \circ \pi_0, F(\lambda) \circ \pi_1  \rangle 
\\&= \alpha \circ F(+ \circ \langle \lambda \circ \pi_0, \lambda \circ \pi_1  \rangle )
\\&= \alpha \circ F(\lambda) \circ F(\sigma)
\end{align*}
To show that the additive bundle morphism $(\alpha \circ F(\lambda), F(\zeta))$ preserves the zero section, we must show that
$0' \circ F(\zeta) = \alpha \circ F(\lambda) \circ F(\sigma) $. We proceed as follows:
The following equation shows that the additive bundle morphism $(\lambda, 0)$ preserves the zero section.
\begin{align*}
\alpha \circ F(\lambda) \circ F(\zeta) 
& = \alpha \circ F(0) \circ F(\zeta ) 
\\& = 0'_{F(X)} \circ F(\zeta)
\end{align*}
In order to show that the vertical lift $\alpha \circ F(\lambda)$ is compatible with $\ell'$, we must show that $T'(\alpha \circ F(\lambda)) \circ \alpha \circ F(\lambda) = \ell' \circ \alpha \circ F(\lambda)$. We proceed as follows:
\begin{align*}
T'(\alpha ) \circ T'(F(\lambda)) \circ \alpha \circ F(\lambda)  
&=  T'(\alpha ) \circ \alpha \circ F(T'(\lambda)) \circ F(\lambda) 
\\& = T'(\alpha ) \circ \alpha \circ F(\ell) \circ F(\lambda) 
\\&= \ell'  \circ \alpha \circ F(\lambda)
\end{align*}

Since $\alpha \circ F(0_X)=0_{F(X)}$ (which is part of Definition \ref{def:lax_tangent_functor} of a lax tangent functor) the map
$\mu' : F(X^2) \to T'(F(X))$ used in the universality of the vertical lift is
$$
\mu' 
=
T'(F(\sigma)) \circ 
\langle \alpha \circ F(0_X) \circ \pi_0,  \alpha \circ F(\lambda) \circ \pi_1 \rangle 
=
 \alpha \circ 
F(\sigma \circ \langle 0_X , \lambda \rangle ) = \alpha \circ F(\mu).
$$
Thus, Diagram \ref{diagram:universality} of Definition \ref{def:differential bundle}, the universality diagram is 
\[\begin{tikzcd}
	{F(X^2)} & {F(T(X))} & {T'(F(X))} \\
	{F(*)} & {F(T(*))} & {T'(F(*)).}
	\arrow["\alpha"', from=2-2, to=2-3]
	\arrow["\alpha", from=1-2, to=1-3]
	\arrow["{F(T(!))}"', from=1-2, to=2-2]
	\arrow["{T(F(!))}", from=1-3, to=2-3]
	\arrow["\lrcorner"{anchor=center, pos=0.125}, draw=none, from=1-2, to=2-3]
	\arrow["{F(\mu_X)}", from=1-1, to=1-2]
	\arrow["{F(!)}"', from=1-1, to=2-1]
	\arrow["{F(0)}"', from=2-1, to=2-2]
	\arrow["\lrcorner"{anchor=center, pos=0.125}, draw=none, from=1-1, to=2-2]
\end{tikzcd}\]
The left square is a pullback since $F$ preserves pullbacks over the terminal object. The right square is a pullback because it is the naturality square of $\alpha$.
\item By part (a) of this proof, $F(X)\to F(*)$ is a differential bundle. Since $F$ is a strong morphism, $F(*)=*'$ and $F(X)$ is a differential bundle over the terminal object which by Definition \ref{def:differential bundle}, is a differential object.
\end{enumerate}
\end{proof}

Now we show that the evaluation of a \nice{} functor sends additive morphisms of differential objects to additive morphisms of differential bundles. This defines the evaluation functor of Theorem \ref{thm:Eval_Dobj_DiffFun_DBun} on morphisms in the first component.
\begin{proposition}\label{prop:preserve_differential_structure}
Let $(F,\alpha): (\mathbb X, T) \to (\mathbb X',T')$ be a \nice{} functor, $(A,\zeta,\sigma,\lambda)$ and $(B,\zeta',\sigma', \lambda')$ be differential objects in $\mathbb X$ and $f: A \to B$ be an additive morphism of differential objects. Then 
$$
(F(f),1_{F(*)}): (F(A),F(*),F(!),F(\zeta),F(\sigma),\alpha \circ F(\lambda)) \to (F(B),F(*),F(!),F(\zeta'),F(\sigma'),\alpha \circ F(\lambda'))
$$ 
is an additive morphism of differential bundles.
\end{proposition}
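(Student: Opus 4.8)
The plan is to reduce the claim to applying $F$ to the additivity identities that $f$ already satisfies in $\mathbb X$, using only functoriality of $F$ together with the fact --- part of being a differential functor (Definition \ref{def:differential_functors}) and already exploited in the proof of Proposition \ref{prop:Dobj_to_Dbun} --- that $F$ preserves pullbacks over the terminal object, so that it carries the pullback powers $A_n$ of $!\colon A\to *$ to the pullback powers $F(A)_n$ of $F(!)\colon F(A)\to F(*)$, compatibly with the projections. First I would record that, by Proposition \ref{prop:Dobj_to_Dbun}, both $(F(A),F(*),F(!),F(\zeta),F(\sigma),\alpha\circ F(\lambda))$ and $(F(B),F(*),F(!),F(\zeta'),F(\sigma'),\alpha\circ F(\lambda'))$ are differential bundles, so that the statement is meaningful; and I would note that since additivity of a bundle morphism (Definition \ref{def:differential_bundle_morphisms}, Diagram \ref{eqn:additive}) refers only to the projection, the zero section and the fibrewise addition, the lift maps $\alpha\circ F(\lambda)$, $\alpha\circ F(\lambda')$ and hence the distributor $\alpha$ play no role in the argument.

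The verification then has three short steps. First, $(F(f),1_{F(*)})$ is a morphism of differential bundles: in $\mathbb X$ we have $!\circ f = {!}_A$, since by Remark \ref{rem:DiffObj} the projection of a differential object is the unique morphism to $*$ and both sides are morphisms $A\to *$, so applying $F$ gives $F(!)\circ F(f)=F(!)=1_{F(*)}\circ F(!)$, which is the commuting square of Definition \ref{def:differential_bundle_morphisms}. Second, for the zero-section square of Diagram \ref{eqn:additive}: additivity of $f$, whose base component is $1_*$, gives $f\circ\zeta=\zeta'$ in $\mathbb X$, hence $F(f)\circ F(\zeta)=F(\zeta')=F(\zeta')\circ 1_{F(*)}$. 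Third, for the addition square: additivity of $f$ gives $\sigma'\circ f_2=f\circ\sigma$ with $f_2=\langle f\circ\pi_0,f\circ\pi_1\rangle\colon A_2\to B_2$; applying $F$ and identifying $F(f_2)$ with $\langle F(f)\circ\pi_0,F(f)\circ\pi_1\rangle\colon F(A)_2\to F(B)_2$ via the pullback-power preservation above, one gets $F(\sigma')\circ F(f)_2=F(f)\circ F(\sigma)$, as required.

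I do not expect a genuine obstacle, as the argument is formal. The only point needing a moment of care is the identification $F(A_2)\cong F(A)_2$ together with $F(f_2)=F(f)_2$ in the third step, which is where the differential-functor hypothesis (preservation of pullbacks over the terminal object) enters; this has already been handled inside the proof of Proposition \ref{prop:Dobj_to_Dbun}, so it may simply be cited.
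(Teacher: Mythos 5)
Your proposal is correct and follows essentially the same route as the paper: the paper simply applies $F$ to the three squares expressing that $f$ is an additive morphism of differential objects and invokes functoriality. You are in fact slightly more careful than the paper in making explicit the identification $F(A_2)\cong F(A)_2$ and $F(f_2)=F(f)_2$ via preservation of pullbacks over the terminal object, a point the paper's proof leaves implicit.
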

\begin{proof}
Due to Definition \ref{def:differential_bundle_morphisms}, showing that $(F(f),1_{F(*)})$ is an additive morphism of differential bundles amounts to showing that 
\[\begin{tikzcd}
	{F(A)} & {F(B)} & {F(A_2)} & {F(B_2)} & {F(*)} & {F(*)} \\
	{F(*)} & {F(*)} & {F(A)} & {F(B)} & {F(A)} & {F(B)}
	\arrow["{F(f)}", from=1-1, to=1-2]
	\arrow["{F(!)}"', from=1-1, to=2-1]
	\arrow["{F(!)}", from=1-2, to=2-2]
	\arrow["{F(f_2)}", from=1-3, to=1-4]
	\arrow["{F(\sigma)}"', from=1-3, to=2-3]
	\arrow["{F(\sigma')}", from=1-4, to=2-4]
	\arrow["{1_{F(*)}}", from=1-5, to=1-6]
	\arrow["{F(\zeta)}"', from=1-5, to=2-5]
	\arrow["{F(\zeta)}", from=1-6, to=2-6]
	\arrow["{1_{F(*)}}"', from=2-1, to=2-2]
	\arrow["{F(f)}"', from=2-3, to=2-4]
	\arrow["{F(f)}"', from=2-5, to=2-6]
\end{tikzcd}\]
commute. They all commute by functoriality because they are images of the diagrams expressing that $f$ was a morphism of differential objects.
\end{proof}

The \WEIL-natural transformations from Definition \ref{def:actegory_linear_transformations} provide a notion of morphisms between differential functors.

The following proposition shows that \WEIL-natural transformations between \nice{} functors induce additive morphisms of differential bundles and linear \WEIL-natural transformations between \nice{} functors induce linear morphisms of differential bundles. Thereby it shows that the evalutation functor of Theorem \ref{thm:Eval_Dobj_DiffFun_DBun} is well defined on morphisms in the second component.

\begin{proposition}\label{prop:eval_on_morphisms} 
Let $(\mathbb X, T), (\mathbb X', T')$ be tangent categories and let  $(F,\alpha),(F',\alpha') : (\mathbb X, T) \to (\mathbb X', T')$ be differential functors.  
\begin{enumerate}[label = (\alph*)]
    \item If $A$ is a differential object in $\mathbb X$ and  $\varphi:F\Rightarrow F'$ is a natural transformation of \nice{} functors, then the pair $(\varphi_A, \varphi_*)$ consisting of $\varphi_A:F(A)\to F'(A)$ and $\varphi_*:F(*)\to F'(*)$ is an additive morphism of differential bundles.
    \item If $\varphi:F\Rightarrow F'$ is a linear natural transformation of \nice{} functors, then the pair $(\varphi_A, \varphi_*)$ consisting of $\varphi_A$ and $\varphi_*$ is a linear morphisms of differential bundles.
\end{enumerate}
\end{proposition}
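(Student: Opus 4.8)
The plan is to reduce both claims to diagram chases, using the fact (already established in Propositions \ref{prop:Dobj_to_Dbun} and \ref{prop:preserve_differential_structure}) that the relevant tuples $(F(A),F(*),F(!),F(\zeta),F(\sigma),\alpha\circ F(\lambda))$ and $(F'(A),F'(*),F'(!),F'(\zeta),F'(\sigma),\alpha'\circ F(\lambda))$ are differential bundles. For part (a), I first observe that $(\varphi_A,\varphi_*)$ is a morphism of differential bundles in the sense of Definition \ref{def:differential_bundle_morphisms}: the square $q'\circ\varphi_A = \varphi_*\circ q$ with $q=F(!)$, $q'=F'(!)$ is exactly the naturality square of $\varphi$ at the unique map $!:A\to *$. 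Then I must check additivity, i.e. that the two squares of Diagram \ref{eqn:additive} commute. The square involving $\zeta$ is the naturality square of $\varphi$ at $\zeta:*\to A$ (using that the zero section of the induced bundle is $F(\zeta)$, resp. $F'(\zeta)$). The square involving $\sigma$ requires that $\varphi_A\circ F(\sigma) = F'(\sigma)\circ(\varphi_A)_2$; here $(\varphi_A)_2 : F(A_2)\to F'(A_2)$ is $\langle\varphi_A\circ\pi_0,\varphi_A\circ\pi_1\rangle$, and because $F(A_2)$ and $F'(A_2)$ are the pullback powers of $F(!)$ and $F'(!)$, this map is determined by its components, so the identity follows from naturality of $\varphi$ at $\sigma:A_2\to A$ together with naturality at the two projections. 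So part (a) is entirely a matter of unwinding which structure maps the induced bundles carry and invoking naturality of $\varphi$.

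For part (b), I must additionally verify the linearity condition of Diagram \ref{eqn:linear}, namely
\[
\lambda'_{F'(A)}\circ\varphi_A \;=\; T'(\varphi_A)\circ\lambda_{F(A)},
\]
where the lift of the source bundle is $\lambda_{F(A)} = \alpha_A\circ F(\lambda)$ and that of the target is $\lambda'_{F'(A)} = \alpha'_A\circ F'(\lambda)$ (with $\alpha = \hat\alpha_W$, $\alpha' = \hat\alpha'_W$). Substituting, the required identity becomes
\[
\alpha'_A\circ F'(\lambda)\circ\varphi_A \;=\; T'(\varphi_A)\circ\alpha_A\circ F(\lambda).
\]
I would prove this by pasting two commuting squares: on one side, the naturality square of $\varphi$ at $\lambda:A\to T(A)$ gives $F'(\lambda)\circ\varphi_A = \varphi_{T(A)}\circ F(\lambda)$; on the other side, the linearity condition in Definition \ref{def:actegory_linear_transformations}, Diagram \ref{diagram:linear_actegory_trafo}, applied with $m=W$, gives $\alpha'_{A}\circ T_W(1_W,\varphi)_{A}$... more precisely it says $\alpha'_A\circ\Te(1_W,\varphi)_A = T'(\varphi_A)\circ\alpha_A$, i.e. $\alpha'_A\circ\varphi_{T(A)} = T'(\varphi_A)\circ\alpha_A$ after identifying $\Te(W,-)\cong T$. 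Composing these two gives exactly the displayed equation. The remaining additivity conditions carry over verbatim from part (a).

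The main obstacle I anticipate is purely bookkeeping: correctly identifying the structure maps (zero section, fibrewise addition, lift) of the bundles produced by $\Eval$ so that each verification genuinely reduces to a naturality square of $\varphi$, and keeping the two distinct roles of $\alpha$ straight — $\alpha$ is $\hat\alpha_W$ for $F$ and $\alpha'$ is $\hat\alpha'_W$ for $F'$, and the linearity hypothesis on $\varphi$ is precisely what intertwines them at $W$. Once the dictionary is set up, every step is an instance of naturality or of the actegory-linearity square, with no genuine computation required; I would present it as a sequence of pasting diagrams rather than component chases.
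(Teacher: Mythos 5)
Your proposal is correct and follows essentially the same route as the paper: part (a) is the observation that each square in Definition \ref{def:differential_bundle_morphisms} is (or is assembled from) a naturality square of $\varphi$ at $!$, $\zeta$, $\sigma$ and the projections, and part (b) pastes the naturality square of $\varphi$ at $\lambda$ with the actegory-linearity square of Definition \ref{def:actegory_linear_transformations} evaluated at $W$. The only blemish is a typo ($\alpha'\circ F(\lambda)$ should be $\alpha'\circ F'(\lambda)$ for the target bundle's lift), which does not affect the argument.
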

\begin{proof}
\begin{enumerate}[label = (\alph*)]
\item A morphism between \nice{} functors is exactly a natural transformation $\varphi: F \Rightarrow F'$. The pair of maps
$(\varphi_{A} , \varphi_{*})$ forms an additive morphism of differential bundles as in Definition \ref{def:differential_bundle_morphisms} since they commute with the images of $\sigma_A, 0_A $ and $q_A$ by naturality.
\item In order to show that the morphism $(\varphi_{A}, \varphi_{*})$ of differential bundles is linear, we need to show that 
\[\begin{tikzcd}
	{TF(A)} & {TF'(A)} \\
	{F(A)} & {F'(A)}
	\arrow["{\varphi_A}"', from=2-1, to=2-2]
	\arrow["{T ( \varphi_A )}", from=1-1, to=1-2]
	\arrow["\lambda", from=2-1, to=1-1]
	\arrow["{\lambda'}"', from=2-2, to=1-2]
\end{tikzcd}
\]
commutes.
This diagram factors as
\[\begin{tikzcd}
	{T(F(A))} & {T(F'(A))}\\
	{F(T(A))} & {F'(T(A))} \\
    {F(A)} & {F'(A)} 
	\arrow["{\varphi_{T (A)}}", from=2-1, to=2-2]
	\arrow["{T ( \varphi_{(A)})}", from=1-1, to=1-2]
	\arrow["{F(\lambda_A)}", from=3-1, to=2-1]
	\arrow["{F'(\lambda_A)}"', from=3-2, to=2-2]
	\arrow["\alpha_W", from=2-1, to=1-1]
	\arrow["{\alpha_W'}"', from=2-2, to=1-2]
	\arrow["{\varphi_{A}}"', from=3-1, to=3-2]
\end{tikzcd}\]
where the lower part commutes by naturality of $\varphi$ and the upper part commutes if $\varphi$ as linear.
\end{enumerate}
\end{proof}

The different flavors of functors and natural transformations in Propositions \ref{prop:Dobj_to_Dbun} and \ref{prop:eval_on_morphisms} define four full subcategories of the hom-categories $\underline \Hom (\mathbb X, \mathbb X')$ in the four variants of $\weil\aACT^\bullet_{\bullet,t}$.

\begin{definition}\label{def:DiffFun} 
Let $(\mathbb X, T_\bullet)$ and $(\mathbb X', T'_\bullet)$ be tangent actegories.
\begin{enumerate}[label = (\alph*)]
\item The category $\DiffFun(\mathbb X, \mathbb X')$ has as objects lax \nice{} functors from $\mathbb X$ to $\mathbb X'$ and the morphisms between them are \weil-natural transformations as in Definition \ref{def:actegory_linear_transformations}.
\item The category $\DiffFun^\strong(\mathbb X, \mathbb X')$ has as objects strong \nice{} functors from $\mathbb X$ to $\mathbb X'$ and the morphisms between them are \weil-natural transformations.
\item The category $\DiffFun_\lin(\mathbb X, \mathbb X')$ has as objects lax \nice{} functors from $\mathbb X$ to $\mathbb X'$ and the morphisms between them are linear \weil-natural transformations as in Definition \ref{def:actegory_linear_transformations}.
\item The category $\DiffFun^\strong_\lin(\mathbb X, \mathbb X')$ has as objects strong \nice{} functors from $\mathbb X$ to $\mathbb X'$ and the morphisms between them are linear \weil-natural transformations.
\end{enumerate}
\end{definition}
Next we will define a functor $\Eval$ that evaluates differential functors on differential objects. The key insight here is that one does not just get any object, but an object together with the structure that makes it a differential bundle. For strong \nice{} functors one even obtains differential objects.

This means strong \nice{} functors preserve differential objects, lax \nice{} functors almost preserve them by sending them to differential bundles. In addition they preserve additive and linear maps between differential objects (or send them to additive and linear maps of differential bundles). Thus strong \nice{} functors preserve the category of differential objects, lax \nice{} functors send it into the category of differential bundles.

In order to capture this more precisely we formulate this result as an evaluation functor. 

Let $\eval: \mathbb X \times \Fun(\mathbb X, \mathbb X') \to \mathbb X'$ be the usual evaluation functor.
Let $u: \DObj(\mathbb X) \to \mathbb X$ be the functor that sends a differential object to its underlying object. Let $U: \DiffFun_\bullet (\mathbb X, \mathbb X') \to \Fun  (\mathbb X, \mathbb X')$ be the functor that sends a \nice{} functor to its underlying functor. Let $t: \DBun(\mathbb X') \to \mathbb X'$ be the functor that sends a differential bundle $(E,M,q,\zeta,\sigma)$ to its total bundle space $E$. We will construct a functor \Eval{} so that \eval{} factors through $\DBun(\mathbb X')$ in the sense that the following diagrams commute.
\begin{equation}
\hspace{-.5cm}
\begin{tikzcd}
	{\DObj(\mathbb X) \times \DiffFun_\bullet (\mathbb X, \mathbb X')} & {\DBun(\mathbb X')} \\
	{\mathbb X \times \Fun(\mathbb X,\mathbb X')} & {\mathbb X'}
	\arrow["\Eval", from=1-1, to=1-2]
	\arrow["u \times U"', from=1-1, to=2-1]
	\arrow["t", from=1-2, to=2-2]
	\arrow["\eval"', from=2-1, to=2-2]
\end{tikzcd} \quad
\begin{tikzcd}
	{\DObj(\mathbb X) \times \DiffFun^\strong_\bullet (\mathbb X, \mathbb X')} & {\DObj(\mathbb X')} \\
	{\mathbb X \times \Fun(\mathbb X,\mathbb X')} & {\mathbb X'}
	\arrow["\Eval", from=1-1, to=1-2]
	\arrow["u \times U"', from=1-1, to=2-1]
	\arrow["u", from=1-2, to=2-2]
	\arrow["\eval"', from=2-1, to=2-2]
\end{tikzcd}
\label{diagram:Eval_functor}
\end{equation}

\begin{definition}\label{def:Eval}
Let $(\mathbb X,T,p,0,+,\ell,c)$ and $(\mathbb X',T',p',0',+',\ell',c')$ be tangent categories. Define the assignment of objects and morphisms $\Eval: \DObj(\mathbb X) \times \DiffFun_\bullet (\mathbb X, \mathbb X') \to \DBun(\mathbb X')$ by the following rule:
\begin{itemize}
    \item On objects, $\Eval$ sends a differential object $A$ and a \nice{} functor $F:\mathbb X \to \mathbb X'$, to the differential bundle in $\mathbb X'$ that is given by $(F(A), F(*),F(q),F(\sigma),F(0),\alpha \circ F(\lambda))$.
    \item On morphisms $\Eval$ sends an additive differential object morphism $(f,1_*): A \to A'$ and a natural transformation $\varphi: F \Rightarrow F'$ to the differential bundle morphism $(\varphi_{A'} \circ F(f), \varphi_*)$.
\end{itemize} 
\end{definition}
Proposition \ref{prop:Dobj_to_Dbun} shows that $\Eval$ is well defined because $(F(A), F(*),F(q),F(\sigma),F(0),\alpha \circ F(\lambda))$ is indeed a differential bundle. The pair $(\varphi_{A'} \circ F(f), \varphi_*)$ is a differential bundle morphism because it is the composition of $(\varphi_{A'},\varphi_*)$ with $(F(f),1_{F(*)})$, both of which are differential bundle morphisms by Propositions 4.6 and 4.7.

\begin{theorem}\label{thm:Eval_Dobj_DiffFun_DBun}
Let $(\mathbb X,T,p,0,+,\ell,c)$ and $(\mathbb X',T',p',0',+',\ell',c')$ be tangent categories. Then the assignment
$$
\Eval : \DObj_\add(\mathbb X) \times \DiffFun_\bullet (\mathbb X, \mathbb X') \to \DBun_{\add}(\mathbb X')
$$
as defined in Definition \ref{def:Eval} is a functor and the diagrams (\ref{diagram:Eval_functor}) commute.
\end{theorem}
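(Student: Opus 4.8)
The plan is to observe that Propositions \ref{prop:Dobj_to_Dbun}, \ref{prop:preserve_differential_structure} and \ref{prop:eval_on_morphisms} already contain all the substantive content, so that only two things remain: checking that the assignment of Definition \ref{def:Eval} preserves identities and composition, and checking that the two triangles in (\ref{diagram:Eval_functor}) commute. I would begin with well-definedness. On objects this is exactly Proposition \ref{prop:Dobj_to_Dbun}: for a lax \nice{} functor $(F,\alpha)$ and a differential object $A$, the tuple $(F(A),F(*),F(!),F(\sigma),F(\zeta),\alpha\circ F(\lambda))$ is a differential bundle, which is furthermore a differential object if $(F,\alpha)$ is strong, so $\Eval$ takes values in $\DBun_\add(\mathbb X')$ (respectively $\DObj_\add(\mathbb X')$ in the strong case). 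On morphisms, $\Eval$ sends $((f,1_*),\varphi)$ to $(\varphi_{A'}\circ F(f),\varphi_*)$, and, as already noted after Definition \ref{def:Eval}, this is the composite in $\DBun_\add(\mathbb X')$ of $(F(f),1_{F(*)})$ --- an additive bundle morphism by Proposition \ref{prop:preserve_differential_structure} --- with $(\varphi_{A'},\varphi_*)$ --- an additive bundle morphism by Proposition \ref{prop:eval_on_morphisms}(a), and a linear one by Proposition \ref{prop:eval_on_morphisms}(b) when $\varphi$ is linear. Since a composite of a linear and an additive bundle morphism is still additive, the target $\DBun_\add(\mathbb X')$ is correct for every choice of $\DiffFun_\bullet$.

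The only computation that is genuinely new is functoriality. Preservation of identities is immediate from Definition \ref{def:Eval}: the identity of $(A,F)$ is $((1_A,1_*),1_F)$, and its image is $((1_F)_A\circ F(1_A),(1_F)_*)=(1_{F(A)},1_{F(*)})$, the identity of $\Eval(A,F)$. For composition, recall that composition in $\DObj_\add(\mathbb X)\times\DiffFun_\bullet(\mathbb X,\mathbb X')$ is componentwise, the second coordinate being the vertical composite of \weil-natural transformations; thus $((f',1_*),\psi)\circ((f,1_*),\varphi)=((f'\circ f,1_*),\psi*\varphi)$ with $(\psi*\varphi)_X=\psi_X\circ\varphi_X$. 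Applying $\Eval$ to the composite gives the bundle morphism with first component $\psi_{A''}\circ\varphi_{A''}\circ F(f')\circ F(f)$ and second component $\psi_*\circ\varphi_*$, whereas composing the images gives first component $\psi_{A''}\circ F'(f')\circ\varphi_{A'}\circ F(f)$ and the same second component. These agree because $\varphi_{A''}\circ F(f')=F'(f')\circ\varphi_{A'}$ is precisely naturality of $\varphi$ at $f'\colon A'\to A''$; hence $\Eval$ is a functor.

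Finally I would check commutativity of the diagrams in (\ref{diagram:Eval_functor}) by unwinding the forgetful functors $u$, $U$, $t$ and the classical evaluation $\eval$. On objects, $t$ applied to $\Eval(A,F)=(F(A),F(*),\dots)$ returns the total space $F(A)$, which equals $\eval(u(A),U(F))=F(A)$; on morphisms, $t$ applied to $\Eval((f,1_*),\varphi)$ returns its first component $\varphi_{A'}\circ F(f)$, which is exactly $\eval(f,\varphi)$ (the classical evaluation sends the arrow $(f,\varphi)$ to $\varphi_{A'}\circ F(f)=F'(f)\circ\varphi_A$). This is commutativity of the left square; the right square, for strong \nice{} functors, is the same argument with $u\colon\DObj(\mathbb X')\to\mathbb X'$ in place of $t$ and Proposition \ref{prop:Dobj_to_Dbun}(b) guaranteeing that $\Eval$ lands in $\DObj_\add(\mathbb X')$.

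I do not expect a real obstacle: the geometric content is entirely carried by the earlier propositions. The only place that deserves a moment's attention is the composition check above, where the apparent discrepancy between $\varphi_{A''}\circ F(f')$ and $F'(f')\circ\varphi_{A'}$ must be recognized as naturality of $\varphi$; beyond that the proof is just matching the four variants --- lax or strong \nice{} functors, arbitrary or linear \weil-natural transformations --- against the source and target categories named in the statement.
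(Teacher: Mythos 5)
Your proposal is correct and follows essentially the same route as the paper: well-definedness is delegated to Propositions \ref{prop:Dobj_to_Dbun}, \ref{prop:preserve_differential_structure} and \ref{prop:eval_on_morphisms}, functoriality reduces to the naturality square of $\varphi$ at $f'$, and the diagrams (\ref{diagram:Eval_functor}) commute by inspection of the first component. Your componentwise computation of the composition check is, if anything, slightly more explicit than the paper's factorization-and-swap argument, but the key step (naturality of $\varphi$) is identical.
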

\begin{proof}
The assignments of objects and morphisms are given by Propositions \ref{prop:Dobj_to_Dbun}, \ref{prop:preserve_differential_structure} and \ref{prop:eval_on_morphisms}. Thus it only remains to show that $\Eval$ preserves identity and compositions.
By its definition on morphisms from the usual evaluation, $\Eval$ sends the identity to the identity.
For composition let $A\xrightarrow{(f,1_*)}B \xrightarrow{(g,1_*)}C$ be additive morphisms of differential objects and let $F \overset{\varphi}{\Rightarrow} G \overset{\psi}{\Rightarrow} H$ be natural transformations between \nice{} functors.

Then  
$$
\Eval(g,\psi) \circ \Eval (f, \varphi) = (\psi_C,\psi_*)\circ(G(g),1_{G(*)})\circ (\varphi_{B},\varphi_*)\circ (F(f),1_{F(*)}).
$$
Due to naturality of $\varphi$ this is equal to
$$
(\psi_C,\psi_*)\circ (\varphi_{B},\varphi_*)\circ(F(g),1_{F(*)})\circ (F(f),1_{F(*)}) = (\psi_C \circ \varphi_B,\psi_* \circ \varphi_*) \circ (F(g)\circ F(f), 1_{F(*)})
$$
which equals  $\Eval ((g,\psi) \circ (f, \varphi)$.

The diagrams (\ref{diagram:Eval_functor}) commute because the first component of $\Eval$ sends $A,F$ to the evaluation $F(A)$.
\end{proof}

The following Corollary summarizes the linear and strong cases of Theorem \ref{thm:Eval_Dobj_DiffFun_DBun} succinctly.  By a slight abuse of notation, we use $\Eval_A$ to denote all four cases.

\begin{corollary}\label{cor:eval_difffun_to_dbun_dobj}
Suppose $\mathbb X$ and $\mathbb X'$ are tangent categories and $A$ is a differential object in $\mathbb X$. Then there are four evaluation functors, 
namely
\begin{enumerate}[label = (\alph*)]
\item $\Eval_A: \DiffFun(\mathbb X,\mathbb X') \to \DBun_\add(\mathbb X')$, 
\item $\Eval_A: \DiffFun_\lin(\mathbb X,\mathbb X') \to \DBun_\lin(\mathbb X')$,
\item $\Eval_A: \DiffFun^\strong(\mathbb X,\mathbb X') \to \DObj_\add(\mathbb X')$, and
\item $\Eval_A: \DiffFun^\strong_\lin(\mathbb X,\mathbb X') \to \DObj_\lin(\mathbb X')$.
\end{enumerate}
\end{corollary}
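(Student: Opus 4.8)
The plan is to derive all four functors from Theorem \ref{thm:Eval_Dobj_DiffFun_DBun} by partially applying $\Eval$ at the fixed differential object $A$ and then restricting along the evident inclusions of source categories and corestricting along the evident inclusions of target categories. Recall that a functor out of a product $\mathcal C \times \mathcal D$ determines, for each object $c$ of $\mathcal C$, a functor $\mathcal D \to \mathcal E$, obtained by precomposing with the inclusion $\{c\} \hookrightarrow \mathcal C$ of the one-object subcategory on $c$ and its identity. Applying this to $\Eval : \DObj_\add(\mathbb X) \times \DiffFun(\mathbb X, \mathbb X') \to \DBun_\add(\mathbb X')$ at $A$ produces the functor $\Eval_A : \DiffFun(\mathbb X, \mathbb X') \to \DBun_\add(\mathbb X')$ of case (a), with functoriality inherited directly from Theorem \ref{thm:Eval_Dobj_DiffFun_DBun}.

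For case (c), restrict $\Eval_A$ along the inclusion $\DiffFun^\strong(\mathbb X, \mathbb X') \hookrightarrow \DiffFun(\mathbb X, \mathbb X')$. By Proposition \ref{prop:Dobj_to_Dbun}(b) a strong \nice{} functor sends the differential object $A$ to a differential object of $\mathbb X'$; since $\DObj_\add(\mathbb X')$ is the full subcategory of $\DBun_\add(\mathbb X')$ on the differential objects, $\Eval_A$ corestricts to a functor $\DiffFun^\strong(\mathbb X, \mathbb X') \to \DObj_\add(\mathbb X')$. For case (b), restrict $\Eval_A$ along $\DiffFun_\lin(\mathbb X, \mathbb X') \hookrightarrow \DiffFun(\mathbb X, \mathbb X')$, which keeps the same objects but only the linear \weil-natural transformations. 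By Proposition \ref{prop:eval_on_morphisms}(b), $\Eval_A$ sends such a transformation to a linear morphism of differential bundles, while objects are still sent to differential bundles by Proposition \ref{prop:Dobj_to_Dbun}(a); hence $\Eval_A$ corestricts to a functor $\DiffFun_\lin(\mathbb X, \mathbb X') \to \DBun_\lin(\mathbb X')$. Case (d) is the common refinement of (b) and (c): restrict to $\DiffFun^\strong_\lin(\mathbb X, \mathbb X')$ and combine Proposition \ref{prop:Dobj_to_Dbun}(b) on objects with Proposition \ref{prop:eval_on_morphisms}(b) on morphisms to obtain a functor into $\DObj_\lin(\mathbb X')$.

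There is no genuine obstacle: the corollary is bookkeeping once Theorem \ref{thm:Eval_Dobj_DiffFun_DBun} and Propositions \ref{prop:Dobj_to_Dbun}, \ref{prop:preserve_differential_structure} and \ref{prop:eval_on_morphisms} are available. The only point worth recording explicitly is that in each case the asserted codomain contains exactly the objects and morphisms in the image of $\Eval_A$, so the corestriction is well-defined, and preservation of identities and composites is inherited from $\Eval$.
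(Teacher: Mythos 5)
Your proposal is correct and follows essentially the same route as the paper: case (a) is the partial application of $\Eval$ from Theorem \ref{thm:Eval_Dobj_DiffFun_DBun} at $A$, and cases (b)--(d) are restrictions to subcategories whose codomains are justified by Propositions \ref{prop:Dobj_to_Dbun} and \ref{prop:eval_on_morphisms}. If anything, your version is slightly more careful about which proposition guarantees each corestriction is well-defined.
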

\begin{proof}
\item  
Case (a) is just the functor $\Eval$ of theorem \ref{thm:Eval_Dobj_DiffFun_DBun} evaluated on $A$ in the first component.
Cases (b), (c) and (d) follow from (a) in terms of functoriality as they are just a restriction to a subcategory. Proposition \ref{prop:preserve_differential_structure}.b and Proposition \ref{prop:eval_on_morphisms}.b ensure that the range is correct.
\end{proof}
A typical example for a category with a differential object is $\mathbb N^\bullet$ from Definition \ref{def:N_bullet}. 
\begin{example}\label{ex:given_functor_from_N_bullet_get_diffobj}
    Let $\mathbb N^\bullet$ be the category from Definition \ref{def:N_bullet}. Then any \nice{} functor $F$ from $\mathbb N^\bullet$ into a tangent category $\mathbb X$ induces a differential bundle in $\mathbb X$. Concretely the functor will produce a differential bundle with
    $$
    M = F(\mathbb N^0) , \qquad E = F(\mathbb N^1) , \qquad E_n = F(\mathbb N ^n).
    $$
\end{example}

\section{Inducing a \nice{} functor from $\mathbb N^\bullet$}\label{sec:induce}
In this section we focus in particular on the differential object $\mathbb N$ in $\mathbb N^\bullet$. The main construction is the functor $\Induce$ that is an inverse for $\Eval_{\mathbb N^\bullet}$ sending a differential bundle $(E,M,q,\zeta,\sigma, \lambda)$ in a tangent category $\mathbb X$ to a functor $\mathbb N^\bullet \to \mathbb X$ whose evaluation on $\mathbb N$ returns the differential bundle $(E,M,q,\zeta,\sigma, \lambda)$ as in Example \ref{ex:given_functor_from_N_bullet_get_diffobj}. This means $\mathbb N \to *$ is an initial differential bundle in the sense that there is a \nice{} functor from $\mathbb N \to *$ to every other differential bundle. In Section \ref{sec:equivalence} we will see in which sense this functor is unique.

Recall from Example \ref{ex:tan_structure_on_N_bullet} that the category $\mathbb N^\bullet$ from Definition \ref{def:N_bullet} forms a Cartesian tangent category (as defined in Example \ref{ex:trivial_bundle}) and thus a tangent actegory $(\mathbb N^\bullet,D_\bullet)$. We will not further use that the tangent structure is Cartesian. Objects of $(\mathbb N^\bullet,D_\bullet)$ are of the form $\mathbb N^k$ for $k\geq 0$, whose morphisms are matrices and whose tangent functor is $D_W(\mathbb N^k) = \mathbb N^k \times \mathbb N^k$.
As we saw in Example \ref{ex:N_bullet_differential}. the natural numbers $\mathbb N^1$ are a differential object in $\mathbb N^\bullet$.  

In the next lemma, we show that the set of bundle morphisms from the pullback powers of the bundle $E$ over $M$ to $E$ is a commutative monoid.  This result is important because it allows us to use linear combinations, resembling the matrix addition, as a tool to build morphisms between pullback powers.

The set of natural numbers $\mathbb N = \{0,1,2,...\}$ forms a semi-ring, and every commutative monoid $M$ can be seen as a module over the semi-ring $\mathbb N$ by defining $n\cdot x$ for $n \in \mathbb N$ and $x \in X$ as the $n$-fold addition of $x$ to itself, i.e. $n \cdot x = \sum_{i=1}^n x$.

The following Lemma holds for any additive bundle, though we are particularly interested for the case of differential bundles. Thus, unlike in Definition \ref{def:additive_bundle}, we use the notation $(E, M, q, \zeta, \sigma)$ instead of $(X,A,+,0,p)$, which resembles the notation of differential bundles.
\begin{lemma}\label{lem:module_structure_on_hom}
    Let $(E, M, q, \zeta, \sigma)$ be an additive bundle in a category $\mathbb X$.
    \begin{enumerate}
        \item For every $n \in \mathbb N$, the set 
        $$\Hom_{/M}(E_n,E):=\{ f : E_n \to E | q \circ f = q \circ \pi_0 \}
        $$ 
        together with the addition
        $$
        s: \Hom_{/M}(E_n,E) \times \Hom_{/M}(E_n,E) \to \Hom_{/M}(E_n,E) \quad , \quad (f,g) \mapsto \sigma \circ \langle f,g\rangle
        $$
        and the zero element
        $$
        z = \zeta \circ q \circ \pi_0 \in \Hom_{/M}(E_n,E)
        $$
        is a commutative monoid and thereby an $\mathbb N$-module 
        \item For $g:E_n \to E_m$ and $f:E_m \to E$, fulfilling $q \circ \pi_0  = q \circ  f$ and $q \circ \pi_0 = q \circ \pi_0 \circ g$,
        $$
        (n \cdot f) \circ g = n \cdot (f \circ g)
        $$
        and for morphisms $(f_i)_{1 \leq i \leq k}$,
        $$
        \left(\sum_{i=1}^k f_i \right) \circ g = \sum_{i=1}^k (f_i \circ g),
        $$
        where $\sum$ and $\cdot$ are from the $\mathbb N$-module structure defined in part 1. 
    \end{enumerate}
\end{lemma}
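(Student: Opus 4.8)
The plan is to verify the two parts of the lemma by unwinding the definitions, treating Part 1 as a routine but careful diagram-chase in the slice over $M$ and Part 2 as an induction that reduces to a single "distributivity" square.

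For Part 1, I would first check that the operations $s$ and $z$ are well-defined, i.e.\ that $\sigma \circ \langle f,g\rangle$ and $\zeta\circ q \circ \pi_0$ again lie in $\Hom_{/M}(E_n,E)$; this is immediate from the additive-bundle axioms $q\circ\sigma = q\circ\pi_0$ and $q\circ\zeta = 1_M$ in Definition \ref{def:additive_bundle}. The commutative-monoid axioms (associativity, commutativity, unitality of $s$ with unit $z$) then follow by postcomposing the associativity, commutativity, and unitality diagrams of Definition \ref{def:additive_bundle} with the appropriate tupled maps: for instance, commutativity of $s$ is witnessed by the commutativity square for $\sigma$ together with the fact that $\langle g,f\rangle = \langle\pi_1,\pi_0\rangle\circ\langle f,g\rangle$, and unitality uses that $z = \zeta\circ q\circ\pi_0$ pairs with $f$ to give exactly the map $\langle 0\circ p, 1\rangle$ appearing in the unitality diagram (after composing with $f$). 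The $\mathbb N$-module structure is then automatic from the general fact recalled just before the lemma that every commutative monoid is an $\mathbb N$-module via iterated addition; I would simply cite that.

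For Part 2, the key observation is that both claimed identities are consequences of a single fact: precomposition with $g$ is a monoid homomorphism $\Hom_{/M}(E_m,E)\to\Hom_{/M}(E_n,E)$. Concretely, given $f_1,f_2\in\Hom_{/M}(E_m,E)$, one has $(\sigma\circ\langle f_1,f_2\rangle)\circ g = \sigma\circ\langle f_1\circ g, f_2\circ g\rangle$, which holds because $\langle f_1,f_2\rangle\circ g = \langle f_1\circ g, f_2\circ g\rangle$ by the universal property of the pullback; one also needs $z\circ g = \zeta\circ q\circ\pi_0\circ g = \zeta\circ q\circ\pi_0 = z$, using the hypothesis $q\circ\pi_0 = q\circ\pi_0\circ g$. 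A monoid homomorphism is automatically $\mathbb N$-linear, so it commutes with both $n\cdot(-)$ and finite sums $\sum_{i=1}^k(-)$; spelling this out, $(n\cdot f)\circ g = n\cdot(f\circ g)$ follows by induction on $n$ (base case $n=0$ is $z\circ g = z$, inductive step uses the homomorphism property once), and the finite-sum identity follows by induction on $k$ in the same way. I would present the homomorphism property as the one computation and then note the two displayed identities as formal consequences.

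I do not anticipate a genuine obstacle here; the only place requiring mild care is bookkeeping the slice condition $q\circ f = q\circ\pi_0$ throughout so that every intermediate map genuinely lands in $\Hom_{/M}(E_\bullet,E)$, and checking that the hypotheses in Part 2 ($q\circ\pi_0 = q\circ f$ and $q\circ\pi_0 = q\circ\pi_0\circ g$) are exactly what is needed to make $z\circ g = z$ and to keep $f\circ g$ in the correct hom-set. Everything else is a direct translation of the additive-bundle diagrams of Definition \ref{def:additive_bundle} into statements about the hom-monoid.
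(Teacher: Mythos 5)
Your proposal is correct and follows essentially the same route as the paper: Part 1 is the same direct transfer of the additive-bundle axioms to the hom-set, and Part 2 rests on the same key identity $\langle f_1,\dots,f_k\rangle\circ g = \langle f_1\circ g,\dots,f_k\circ g\rangle$ coming from the universal property of the pullback. Your packaging of Part 2 as ``precomposition with $g$ is a monoid homomorphism, hence $\mathbb N$-linear'' is a slightly tidier organization of the same computation, and your explicit check that $z\circ g = z$ makes precise a detail the paper leaves implicit.
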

\begin{proof}
    \begin{enumerate}
        \item The definition of $s$ is well-defined because $f$ and $g$ fulfill $q \circ f = q \circ g = q \circ \pi_0$ and therefore $\langle f,g \rangle: E_n \to E_2$ is defined.

        The addition $s$ is commutative and associative, because $\sigma$ is. The element $z$ is the unit for $s$ because $\zeta$ is the unit for $\sigma$.
        \item Given a commutative monoid, the $\mathbb N$-module structure is defined as repeated addition, i.e. $n \cdot f := \sum_{i=1}^n f$. Therefore the second equation implies the first one. The second equation holds because
        $$
        \left(\sum_{i=1}^k f_i \right) \circ g = \sigma \circ \langle f_1 , ... , f_k \rangle \circ g = \sigma \circ \langle f_1 \circ g , ... , f_k \circ g \rangle = \sum_{i=1}^k (f_i \circ g).
        $$
        The  condition that $q \circ \pi_0  = q \circ  f$ and $q \circ \pi_0 = q \circ \pi_0 \circ g$ is required because the commutative monoid structure from Part 1 is only defined for morphisms in the slice category $\Hom_{/M}(E_n,E):=\{ f : E_n \to E | q \circ f = q \circ \pi_0 \}$.
    \end{enumerate}
\end{proof}
Using this module structure we now can define a functor $F_E$. 
\begin{definition}\label{def:induce_on_objects_part_1}
    Given a differential bundle $(E,M,q,\zeta,\sigma,\lambda)$ in a tangent category (formulated as a \weil-actegory) $(\mathbb X, \Te)$, then there is an assignment of objects and morphisms, $F_E: \mathbb N^\bullet \to \mathbb X$, defined 
    \begin{itemize}
        \item on objects as $F_E(\mathbb N^k) = E_k$, in particular $F_E(\mathbb N^0) = M$, and
        \item on morphisms (i.e. matrices) $A = (a_{ij})_{1 \leq i \leq n, 1 \leq j \leq m}: \mathbb N^m \to \mathbb N^n$ as 
        $$
        F_E(A) = \left\langle \sum_{j=1}^m a_{1j} \cdot \pi_j , ... , \sum_{j=1}^m a_{nj} \cdot \pi_j \right\rangle
        $$
        where $\sum$ and $\cdot$ are from the module structure in Lemma \ref{lem:module_structure_on_hom}.
    \end{itemize}
\end{definition}
\begin{proposition}\label{prop:induce_on_objects_part_1}
    The assignment $F_E$ of Definition \ref{def:induce_on_objects_part_1} is a functor $F_E: \mathbb N^\bullet \to \mathbb X$.
\end{proposition}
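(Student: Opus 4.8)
The plan is to verify the two functor axioms for $F_E$, after first confirming that $F_E(A)$ is a genuine morphism of $\mathbb X$ for every matrix $A$. For well-definedness I would observe that in the pullback power $E_m$ of $q$ every projection $\pi_j\colon E_m\to E$ satisfies $q\circ\pi_j=q\circ\pi_0$, so $\pi_j\in\Hom_{/M}(E_m,E)$; since Lemma \ref{lem:module_structure_on_hom}(1) shows that $\Hom_{/M}(E_m,E)$ is closed under the $\mathbb N$-module operations, each component $\sum_{j=1}^m a_{ij}\cdot\pi_j$ of $F_E(A)$ again lies in $\Hom_{/M}(E_m,E)$. In particular all $n$ such components agree after postcomposition with $q$, so the tuple $\langle \sum_j a_{1j}\cdot\pi_j,\dots,\sum_j a_{nj}\cdot\pi_j\rangle$ defines a morphism $E_m\to E_n$ by the universal property of the pullback power $E_n$.

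For preservation of identities, the identity matrix $I_m$ has entries $\delta_{ij}$, and since $1\cdot\pi_j=\pi_j$ while $0\cdot\pi_j$ is the monoid unit $z$ of $\Hom_{/M}(E_m,E)$ (which is absorbed by the sum), the $i$-th component of $F_E(I_m)$ collapses to $\pi_i$. Hence $F_E(I_m)=\langle\pi_1,\dots,\pi_m\rangle=1_{E_m}$, again by the universal property of $E_m$.

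For preservation of composition, let $B=(b_{jk})\colon\mathbb N^p\to\mathbb N^m$ and $A=(a_{ij})\colon\mathbb N^m\to\mathbb N^n$ be composable, so $(AB)_{ik}=\sum_j a_{ij}b_{jk}$. Since a morphism into $E_n$ is determined by its $n$ components, I would check $F_E(AB)=F_E(A)\circ F_E(B)$ componentwise. The $i$-th component of $F_E(A)\circ F_E(B)$ is $\big(\sum_j a_{ij}\cdot\pi_j\big)\circ F_E(B)$, and the $j$-th component $\pi_j\circ F_E(B)=\sum_k b_{jk}\cdot\pi_k$ of $F_E(B)$ lies in $\Hom_{/M}(E_p,E)$, so the hypotheses of Lemma \ref{lem:module_structure_on_hom}(2) are met with $g=F_E(B)$. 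That lemma lets me slide $F_E(B)$ inside the module sum: $\big(\sum_j a_{ij}\cdot\pi_j\big)\circ F_E(B)=\sum_j a_{ij}\cdot\big(\pi_j\circ F_E(B)\big)=\sum_j a_{ij}\cdot\big(\sum_k b_{jk}\cdot\pi_k\big)$. Then the $\mathbb N$-module axioms on $\Hom_{/M}(E_p,E)$—distributivity $a\cdot(x+y)=a\cdot x+a\cdot y$ and $a\cdot(b\cdot x)=(ab)\cdot x$—rearrange this to $\sum_k\big(\sum_j a_{ij}b_{jk}\big)\cdot\pi_k$, which is precisely the $i$-th component of $F_E(AB)$.

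The main difficulty is bookkeeping rather than conceptual: the one point needing care is verifying the side conditions of Lemma \ref{lem:module_structure_on_hom}(2) at the composition step, which amounts to checking $q\circ\pi_0\circ F_E(B)=q\circ\pi_0$, i.e. that the first component $\sum_k b_{1k}\cdot\pi_k$ of $F_E(B)$ is fiberwise over $M$; this is immediate from Lemma \ref{lem:module_structure_on_hom}(1). With that in hand, the rest is just transporting the semiring arithmetic of matrix multiplication along the $\mathbb N$-module structure of $\Hom_{/M}$.
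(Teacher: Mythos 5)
Your proof is correct and follows essentially the same route as the paper's: identity preservation by direct evaluation, and composition preservation componentwise via Lemma \ref{lem:module_structure_on_hom}(2) followed by the $\mathbb N$-module arithmetic that reproduces matrix multiplication. The only addition is your explicit well-definedness check that each component lands in $\Hom_{/M}(E_m,E)$, which the paper leaves implicit but which is a sensible inclusion.
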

    The proof is essentially the same as the proof from classical linear algebra that composition of linear maps corresponds to matrix multiplication, just in a more general setting.
\begin{proof}
    In order to show that this assignment is a functor, we need to check that it preserves composition and identity.

    It preserves the identity because, for the identity matrix $\mathbb I_n: \mathbb N^n \to \mathbb N^n$, the functor evaluates as 
    $$
    F_E(\mathbb I_n) = \left\langle \sum_{j=1}^n (\mathbb I_n)_{1j} \cdot \pi_j , ... , \sum_{j=1}^n (\mathbb I_n)_{nj} \cdot \pi_j \right\rangle = \langle \pi_1 , ... , \pi_n \rangle = 1_{E_n}.
    $$

    In order to show that $F_E$ preserves composition, let $A=(a_{ij})_{1 \leq i \leq m , 1 \leq j \leq n} : \mathbb N^n \to \mathbb N^m$ and $B = (b_{ij})_{1 \leq i \leq n, 1 \leq j \leq k}: \mathbb N^k \to \mathbb N^n$ be composable morphisms of $\mathbb N^\bullet$. Then the composition of their images gives in the $h$-th component
    \begin{align*}
    (F_E(A) \circ F_E(B))_h &= \left( \sum_{i=1}^n a_{hi} \cdot \pi_i \right) \circ \left\langle \sum_{j=1}^k b_{1j} \cdot  \pi_j , ... , \sum_{j=1}^k b_{nj} \cdot  \pi_j  \right\rangle
    \\ & =   \sum_{i=1}^n \left(a_{hi}  \cdot \pi_i  \circ \left\langle \sum_{j=1}^k b_{1j} \cdot  \pi_j , ... , \sum_{j=1}^k b_{nj} \cdot  \pi_j  \right\rangle \right)
    \\
    &= \sum_{i=1}^n \left(a_{hi} \cdot   \sum_{j=1}^k b_{ij} \cdot  \pi_j \right)
    \\
    &= \sum_{j=1}^k \left( \sum_{i=1}^n a_{hi} b_{ij} \right) \pi_j = \sum_{j=1}^k (AB)_{hj} \pi_j = (F_E(AB))_h.
    \end{align*}
\end{proof}

In the following constructions we will construct a natural transformation $\hat \alpha: F_E \circ D_\bullet \to \Te \circ (1_\weil \times F_E)$ in order to make $(F_E, \hat \alpha)$ a \nice{} functor. We will define it first for the generators of $\mathbb N^\bullet$ and $\Weil$ and then construct the other components by pullback powers. For a differential bundle $E \xrightarrow{q} M$, the tangent bundle $T(E)$ has a projection $T(q):T(E) \to T(M)$ and a projection $p:T(E) \to E$, over both of which we take pullback powers. In order to keep track which base the pullback power is taken over, we will introduce a notation looking similar to fractions. This notation $\left(\frac{A}{B}\right)^k$ evokes the idea of products in the slice category $\mathbb X/B$, and is more compact and more informative than the somewhat standard notation $A\times_B \cdots \times_B A$ (which makes it hard to keep track of $k$).

\begin{proposition}\label{prop:fubini}~
Let $\mathbb X$ be a category with pullbacks and let 
\[\begin{tikzcd}
	A & B \\
	C & D
	\arrow["f", from=1-1, to=1-2]
	\arrow["\beta"', from=1-1, to=2-1]
	\arrow["\alpha", from=1-2, to=2-2]
	\arrow["g"', from=2-1, to=2-2]
\end{tikzcd}\]
be a commutative square in $\mathbb X$. Let $\left( \frac{A}{B} \right)^k := A \times_B ... \times_B A$ denote the $k$-th pullback power of $A \xrightarrow{f} B$ and let $\left( \frac{A}{C} \right)^k$, $\left( \frac{B}{D} \right)^k$ and $\left( \frac{C}{D} \right)^k$ be defined analogously. Let $\left( \frac{\left( \frac{A}{B} \right)^k}{\left( \frac{C}{D} \right)^k} \right)^m$ be the $m$-th pullback power of $\left( \frac{A}{B} \right)^k \xrightarrow{\left(\frac{\beta}{\alpha}\right)^k} \left( \frac{C}{D} \right)^k$.  Then 
$$
\left( \frac{\left( \frac{A}{B} \right)^k}{\left( \frac{C}{D} \right)^k} \right)^l \cong \left( \frac{\left( \frac{A}{C} \right)^l}{\left( \frac{B}{D} \right)^l} \right)^k .$$ In particular when $C=D$, 
$$ \left( \frac{\left( \frac{A}{B} \right)^k}{C} \right)^l \cong \left( \frac{\left( \frac{A}{C} \right)^l}{\left( \frac{B}{C} \right)^l} \right)^k .
$$
\end{proposition}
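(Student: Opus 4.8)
The plan is to exhibit both sides of the claimed isomorphism as limits of one and the same diagram in $\mathbb X$, so that the conclusion follows from the essential uniqueness of limits. For $k,l\ge 1$, let $\mathcal D_{k,l}$ be the diagram whose objects are the $k\cdot l$ copies $A_{i,j}$ of $A$ (indexed by $1\le i\le k$ and $1\le j\le l$), together with $l$ copies $B_j$ of $B$ and $k$ copies $C_i$ of $C$, and whose only nonidentity arrows are $f\colon A_{i,j}\to B_j$ and $\beta\colon A_{i,j}\to C_i$. The object $D$ does not appear in $\mathcal D_{k,l}$; the hypothesis $\alpha\circ f=g\circ\beta$ is used only to see that the comparison maps $\left(\frac{\beta}{\alpha}\right)^{k}\colon\left(\frac{A}{B}\right)^{k}\to\left(\frac{C}{D}\right)^{k}$ and, symmetrically, $\left(\frac{f}{g}\right)^{l}\colon\left(\frac{A}{C}\right)^{l}\to\left(\frac{B}{D}\right)^{l}$ (each induced componentwise, by $\beta$ respectively $f$) are well defined, the right-hand object $\left(\frac{\left(\frac{A}{C}\right)^{l}}{\left(\frac{B}{D}\right)^{l}}\right)^{k}$ of the statement being the $k$-th pullback power of the latter. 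A cone over $\mathcal D_{k,l}$ with apex $X$ is then nothing but a family of maps $x_{i,j}\colon X\to A$ for which $f\circ x_{i,j}$ is independent of $i$ and $\beta\circ x_{i,j}$ is independent of $j$, the legs into the $B_j$ and $C_i$ being forced.

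I would then check that $\left(\frac{\left(\frac{A}{B}\right)^{k}}{\left(\frac{C}{D}\right)^{k}}\right)^{l}$ has exactly this universal property: a map into it from $X$ is an $l$-tuple of maps $X\to\left(\frac{A}{B}\right)^{k}$ that are equalized by $\left(\frac{\beta}{\alpha}\right)^{k}$, and each such map is itself a $k$-tuple of maps $X\to A$ equalized by $f$; writing $x_{i,j}$ for the $i$-th entry of the $j$-th tuple, the inner pullback powers say $f\circ x_{i,j}$ is independent of $i$ and the outer one says $\beta\circ x_{i,j}$ is independent of $j$, and this correspondence is natural in $X$. Repeating the same unwinding for $\left(\frac{\left(\frac{A}{C}\right)^{l}}{\left(\frac{B}{D}\right)^{l}}\right)^{k}$ --- where now $x_{i,j}$ is the $j$-th entry of the $i$-th tuple, the $l$-tuples live over $C$, and the outer $k$-fold power imposes agreement under $\left(\frac{f}{g}\right)^{l}$ over $\left(\frac{B}{D}\right)^{l}$ --- yields the identical conditions, so this object is also a limit of $\mathcal D_{k,l}$. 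Hence the two are canonically isomorphic. The degenerate cases $k=0$ or $l=0$ are immediate from the definitions (both sides collapsing to $\left(\frac{B}{D}\right)^{l}$, respectively $\left(\frac{C}{D}\right)^{k}$), and the displayed special case is the instance $C=D$, $g=1_C$, for which $\left(\frac{C}{D}\right)^{k}=C$.

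No existence issue arises, since each of the two objects is built from finitely many of the pullbacks assumed to exist in $\mathbb X$; the only thing that needs care is the bookkeeping of the double index set $\{1,\dots,k\}\times\{1,\dots,l\}$ when translating the nested pullback-power descriptions into cones over $\mathcal D_{k,l}$, and I do not anticipate a conceptual obstacle. Alternatively one could argue by induction on $k+l$ using the pasting lemma for pullbacks together with interchange of limits, but routing everything through the single diagram $\mathcal D_{k,l}$ is cleaner and makes the left--right symmetry of the statement manifest.
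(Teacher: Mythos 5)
Your proposal is correct and is essentially the paper's argument: the paper disposes of this proposition with the one-line remark that it is the interchange of limits, and your diagram $\mathcal D_{k,l}$ together with the unwinding of both nested pullback powers into families $x_{i,j}\colon X\to A$ satisfying the two independence conditions is exactly the careful verification of that interchange. The only thing you add beyond the paper is the explicit bookkeeping (and the degenerate cases $k=0$, $l=0$), which is sound.
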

\begin{proof}
    This is the fact that we can commute limits, stated in our notation.
\end{proof}
Now we will combine this fraction calculus of pullbacks with the notation more common for differential bundles and tangent categories, where
$$
E_k = \left( \frac{E}{M} \right)^k \quad \text{and}\quad T_k(A) = \left( \frac{T(A)}{A} \right)^k.
$$
Recall that these pullbacks are preserved by the tangent structure due to Definition \ref{def:differential bundle}.
\begin{corollary}~
\begin{enumerate}[label = (\alph*)]
    \item Let $E \to M$ be a differential bundle in a tangent actegory $(\mathbb X, \Te)$. Then
    $$
    \Te(W^k,E_l) \cong \left( \frac{T_kE}{T_kM} \right)^l \cong \left( \frac{T(E_l)}{E_l} \right)^k.
    $$
    \item In particular for the differential bundle $\mathbb N^1 \to \mathbb N^0$ in $(\mathbb N^\bullet, D_\bullet)$
    $$
    D_\bullet(W^k, \mathbb N^l) \cong \left( \frac{\left( \frac{\mathbb N^2}{\mathbb N} \right)^k}{\mathbb N^0} \right)^l \cong \left( {\mathbb N^{2l}} \right)^k
    .
    $$
\end{enumerate}
\end{corollary}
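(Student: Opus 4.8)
The plan is to deduce both isomorphisms from two facts: that $\Te(W^k,-)$ is the ``pullback power'' functor $T_k(-)=\left(\frac{T(-)}{-}\right)^k$, and the interchange of iterated pullbacks of Proposition~\ref{prop:fubini}.

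First I would establish that $\Te(W^k,X)\cong T_k(X)$ for every object $X$. Recall that $\mathbb N$ is simultaneously the terminal object and the monoidal unit $I$ of $\weil$, so a pullback over $\mathbb N$ is just a product. Taking $A=I$ in the foundational pullbacks of Theorem~\ref{thm:Leung}.1 and iterating, the $k$-fold product $W^k=W\times\cdots\times W$ (equivalently, the $k$-fold pullback of the augmentation $p\colon W\to\mathbb N$ along itself) is a foundational pullback in $\weil$. Since the functor $\Te\colon\weil\to\End(\mathbb X)$ associated to the tangent structure is strong monoidal and preserves foundational pullbacks, and since $\Te(W)=T$, $\Te(\mathbb N)=1$, $\Te(p)=p$, applying $\Te$ to this pullback square gives $\Te(W^k)(X)\cong T(X)\times_X\cdots\times_X T(X)=T_k(X)$. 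In particular $\Te(W^k,E_l)\cong T_k(E_l)=\left(\frac{T(E_l)}{E_l}\right)^k$, which is the right-hand isomorphism of part~(a).

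For the remaining isomorphism I would apply Proposition~\ref{prop:fubini} to the square with vertices $A=TE$, $B=E$, $C=TM$, $D=M$, horizontal arrows $p_E\colon TE\to E$ and $p_M\colon TM\to M$, and vertical arrows $T(q)\colon TE\to TM$ and $q\colon E\to M$; this commutes by naturality of $p\colon T\Rightarrow 1$. The proposition then gives
$$\left(\frac{T_kE}{T_kM}\right)^l=\left(\frac{\left(\frac{TE}{E}\right)^k}{\left(\frac{TM}{M}\right)^k}\right)^l\cong\left(\frac{\left(\frac{TE}{TM}\right)^l}{\left(\frac{E}{M}\right)^l}\right)^k=\left(\frac{\left(\frac{TE}{TM}\right)^l}{E_l}\right)^k.$$
Since $(E,M,q,\zeta,\sigma,\lambda)$ is a differential bundle, Definition~\ref{def:differential bundle} guarantees that the pullback powers $E_l=\left(\frac{E}{M}\right)^l$ are preserved by $T$, so $T(E_l)\cong\left(\frac{TE}{TM}\right)^l$; substituting yields $\left(\frac{T_kE}{T_kM}\right)^l\cong\left(\frac{T(E_l)}{E_l}\right)^k$, which together with the previous paragraph proves part~(a).

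Part~(b) is the specialization of part~(a) to the differential bundle $\mathbb N^1\to\mathbb N^0$ of Example~\ref{ex:N_bullet_differential} in $(\mathbb N^\bullet,D_\bullet)$: here $T=D$, $E_l=(\mathbb N^1)_l=\mathbb N^l$, $D_k(\mathbb N^0)=\mathbb N^0$, $D_k(\mathbb N^1)=\left(\frac{\mathbb N^2}{\mathbb N}\right)^k$ and $D(\mathbb N^l)=\mathbb N^{2l}$, so the two displayed isomorphisms of part~(a) become the asserted ones, reading $\left(\mathbb N^{2l}\right)^k$ as the $k$-fold pullback of $p\colon\mathbb N^{2l}\to\mathbb N^l$ along itself. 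I expect the only real obstacle to be bookkeeping: carefully matching the foundational-pullback clause of Theorem~\ref{thm:Leung} to the product $W^k$, and keeping the superscripts and bases straight when invoking Proposition~\ref{prop:fubini}; there is no genuinely new computation.
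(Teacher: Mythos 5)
Your proposal is correct and follows essentially the same route as the paper: the paper's proof is precisely an application of Proposition \ref{prop:fubini} to the naturality square of $p$ with vertices $TE, E, TM, M$, specialized to $\mathbb N^1 \to \mathbb N^0$ for part (b). You additionally spell out the identifications the paper leaves implicit — that $\Te(W^k,-)\cong T_k(-)$ via the foundational pullbacks and that $T$ preserves the pullback powers $E_l$ — which is harmless extra bookkeeping rather than a different argument.
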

\begin{proof}
    Part (a) is just Proposition \ref{prop:fubini} applied to the square
    \[\begin{tikzcd}
    	{T(E)} & E \\
    	{T(M)} & M
    	\arrow["{p_E}", from=1-1, to=1-2]
    	\arrow["{T(q)}"', from=1-1, to=2-1]
    	\arrow["q", from=1-2, to=2-2]
    	\arrow["{p_M}"', from=2-1, to=2-2]
    \end{tikzcd}\]
    that commutes by naturality of p. 
    Part (b) is part (a) for the differential bundle $\mathbb N^1 \to \mathbb N^0$ in $(\mathbb N^\bullet,D_\bullet)$.
\end{proof}

\begin{lemma}
    Let $(E,M,q,\zeta,\sigma,\lambda)$ be a differential bundle in a tangent actegory $(\mathbb X, \Te)$. Then the functor $F_E$ from Proposition \ref{prop:induce_on_objects_part_1} evaluates as
    $$
    F_E\left( \left( \frac{\mathbb N^l}{\mathbb N} \right)^k \right) \cong \left(\frac{E_l}{E} \right)^k \qquad \text{and} \qquad F_E \left( \frac{\mathbb N^{2l}}{\mathbb N^l} \right)^k \cong \left( \frac{E_{2l}}{E_l} \right)^k .
    $$
\end{lemma}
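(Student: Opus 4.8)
The plan is to reduce both isomorphisms to one fact — that $F_E$ carries the standard presentation of an object $\mathbb N^n$ of $\mathbb N^\bullet$ as the $n$-fold pullback power of the bundle $\mathbb N^1\to\mathbb N^0$ to the standard presentation of $E_n=F_E(\mathbb N^n)$ as the $n$-fold pullback power of $E\to M$ — and then to reassemble the two displayed pullback powers out of such presentations using Proposition~\ref{prop:fubini}.

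First I would unwind the notation: the map underlying $\mathbb N^l\to\mathbb N$ is the first leg $\pi_0$ of the pullback power $\mathbb N^l=\bigl(\frac{\mathbb N^1}{\mathbb N^0}\bigr)^l$, and the map underlying $\mathbb N^{2l}\to\mathbb N^l$ is the first leg of the pullback power $\mathbb N^{2l}=\bigl(\frac{\mathbb N^l}{\mathbb N^0}\bigr)^2$. From the matrix formula of Definition~\ref{def:induce_on_objects_part_1} one checks directly that $F_E$ sends a coordinate projection $\mathbb N^n\to\mathbb N^1$ (a $1\times n$ matrix with a single entry $1$) to the corresponding coordinate projection $E_n\to E$, since the $\pi_j$ occurring in that formula are exactly these projections. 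Hence $F_E$ takes the limit cone exhibiting $\mathbb N^n$ as the $n$-fold pullback power of $\mathbb N^1\to\mathbb N^0$ to the limit cone exhibiting $E_n$ as the $n$-fold pullback power of $E\to M$.

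Then I would glue. Writing each of the $k$ copies of $\mathbb N^l$ appearing in $\bigl(\frac{\mathbb N^l}{\mathbb N}\bigr)^k$ as an $l$-fold pullback power of $\mathbb N^1\to\mathbb N^0$ and identifying their shared $\mathbb N^1$-legs, Proposition~\ref{prop:fubini} identifies the result with the $(1+k(l-1))$-fold pullback power of $\mathbb N^1\to\mathbb N^0$, so $\bigl(\frac{\mathbb N^l}{\mathbb N}\bigr)^k\cong\mathbb N^{1+k(l-1)}$, compatibly with the projection cones. Applying $F_E$ and using the previous paragraph together with $F_E(\mathbb N^{1+k(l-1)})=E_{1+k(l-1)}$ gives $F_E\bigl(\bigl(\frac{\mathbb N^l}{\mathbb N}\bigr)^k\bigr)\cong E_{1+k(l-1)}$; running the identical gluing inside $\mathbb X$ with $E\to M$ in place of $\mathbb N^1\to\mathbb N^0$ gives $\bigl(\frac{E_l}{E}\bigr)^k\cong E_{1+k(l-1)}$, and the two identifications agree because $F_E$ preserves the projections. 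This is the first isomorphism. The second is obtained the same way, gluing $k$ copies of the $2$-fold pullback power $\mathbb N^{2l}\to\mathbb N^l$ (resp. $E_{2l}\to E_l$) along their first legs, which produces the $(k+1)$-fold pullback power of $\mathbb N^l\to\mathbb N^0$ (resp. $E_l\to M$), namely $\mathbb N^{l(k+1)}$ (resp. $E_{l(k+1)}$), and $F_E(\mathbb N^{l(k+1)})=E_{l(k+1)}$.

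The actual mathematical content is only Proposition~\ref{prop:fubini} (commuting limits); the work is bookkeeping — identifying which coordinate is the shared one and verifying, from the explicit matrix formula, that $F_E$ transports the coordinate projections of a pullback power in $\mathbb N^\bullet$ to those of the corresponding pullback power in $\mathbb X$. That compatibility is the one place where the definition of $F_E$, rather than mere functoriality, enters, and it is the step I expect to require the most care.
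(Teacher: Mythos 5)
Your proposal is correct and follows essentially the same route as the paper's (very terse) proof: identify $\bigl(\frac{\mathbb N^l}{\mathbb N}\bigr)^k$ with $\mathbb N^{(l-1)k+1}$ and $\bigl(\frac{\mathbb N^{2l}}{\mathbb N^l}\bigr)^k$ with $\mathbb N^{(k+1)l}$, apply $F_E$ to get $E_{(l-1)k+1}$ and $E_{(k+1)l}$, and then recognize these as the corresponding pullback powers of $E$ via Proposition \ref{prop:fubini}. Your additional check that $F_E$ carries coordinate projections to coordinate projections (so that the identifications of cones agree) is a detail the paper leaves implicit, and it is a worthwhile one to make explicit.
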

\begin{proof}
$$
F_E\left( \left( \frac{\mathbb N^l}{\mathbb N} \right)^k \right) = F_E(\mathbb N^{(l-1)\cdot k +1}) = F_E(E_{(l-1)\cdot k +1}) \cong 
$$
$$
F_E \left( \frac{\mathbb N^{2l}}{\mathbb N^l} \right)^k = F_E(\mathbb N^{(k+1)l}) = E_{(k+1)l}\cong \left( \frac{E_{2l}}{E_l} \right)^k
$$
\end{proof}

Given a differential bundle $(E,M,q)$ in a tangent actegory $(\mathbb X, T_\bullet)$, we now finally are able to construct the \nice{} functor 
$$
\Ind(E) = (F_E, \hat \alpha): \mathbb (\mathbb N^\bullet, D_\bullet) \to (\mathbb X, T_\bullet)
$$
that sends $\mathbb N$ to $E$. The underlying functor $F_E$ is the functor $F_E$ from Proposition \ref{prop:induce_on_objects_part_1}.

\begin{definition}\label{def:alpha_hat_for_induce}
Given a differential bundle $(E, M, q, \zeta, \sigma , \lambda)$ in a tangent actegory $(\mathbb X, T_\bullet)$, we define a set of morphisms
$$
\hat \alpha: F_E \circ D_A(\mathbb N^k) \to \T_A \circ F_E (\mathbb N^k)
$$
for every object $A$ of $\weil$ and $\mathbb N^k$ of $\mathbb N^\bullet$. We define it inductively out of the differential bundle structure of $(E,M,q, \zeta, \sigma, \lambda)$ by
\begin{align}
    \hat \alpha_{W  , \mathbb N^1} := T(\sigma) \circ \langle 0 \circ \pi_0 , \lambda \circ \pi_1 \rangle   &:  E_2 \to T(E), \label{eqn:alpha_on_W_N1}
    \\
    \hat \alpha_{W, \mathbb N^0} := 0 &:  M \to T(M) ,
    \\
    \hat \alpha_{\mathbb N, \mathbb N^1} := 1_E &:  E \to E
    \\
    \hat \alpha_{W^k ,\mathbb N^1} := \left( \frac{\hat \alpha_{W, \mathbb N^1}}{E}\right)^k  &:  F_E\circ D_{W^k}(\mathbb N^1) = \left(\frac{E_2}{E}\right)^k \to \left( \frac{TE}{E} \right)^k = \T_{W^k} \circ F_E(\mathbb N^1),
    \\
    \hat \alpha_{W^k ,\mathbb N^l} := \left( \frac{ \left(\frac{\hat \alpha_{W, \mathbb N}}{E}\right)^k}{M}\right)^l &: F_E \circ D_{W^k} (\mathbb N^{l})= \left( \frac{ \left(\frac{E_2}{E}\right)^k}{M}\right)^l \to \left( \frac{ \left(\frac{TE}{E}\right)^k}{M}\right)^l = \T_{W^k} \circ F_E(\mathbb N^{l}),
\end{align}
and 
\begin{equation}
    \hat \alpha_{A \otimes B, \mathbb N^l} := \T_A (\hat \alpha_{B,\mathbb N^l}) \circ \hat \alpha_{A,D_B(\mathbb N^l)}  : \quad F_E \circ D_{A} \circ D_{B} (\mathbb N^{l}) \to \T_{A} \circ \T_{B} \circ F_E (\mathbb N^{l}), \label{eqn:alpha_on_tensor}
\end{equation}
where $\left(\frac{A}{B} \right)^n$ denotes the $n$-th pullback power of $A$ over $B$. The expression $E_n = \left(\frac{E}{M} \right)^n$ continues to denote the $n$-th pullback power of $E$ over $M$.
\end{definition}

In Proposition \ref{prop:induce_on_objects} we show that this is a natural transformation $\hat \alpha: F_E \circ D_\bullet \to (\Te \circ 1_{\weil} \times F_E)$ and that $(F_E, \hat \alpha)$ is a \nice{} functor.
Before that, we now showcase how Equations \ref{eqn:alpha_on_W_N1}-\ref{eqn:alpha_on_tensor} determine $\hat \alpha$ for a nontrivial Weil algebra at the example of $W \otimes W$. The component of $\hat \alpha$ is 
    $$
    \hat \alpha_{W \otimes W, \mathbb N} = \T_W( \alpha_{W, \mathbb N}) \circ \hat \alpha_{W,D_W(\mathbb N)}.
    $$
    The parts unpack as
    \begin{align*}
    \hat \alpha_{W,D_W(\mathbb N)} &= \langle T(\sigma) \circ \langle 0 \circ \pi_0, \lambda \circ \pi_1 \rangle , T(\sigma) \circ \langle 0 \circ \pi_2, \lambda \circ \pi_3 \rangle \rangle : E_4 \to T(E_2) \cong \left( \frac{T(E)}{T(M)} \right)^2 \\
    T_W(\hat \alpha_{W, \mathbb N}) &= T\left(T(\sigma) \circ \langle 0 \circ \pi_0, \lambda \circ \pi_1 \rangle \right) : T(E_2) \to T\circ T(E)  = T^2(E)
    \end{align*}
    and the composition is 
    $$
    \hat \alpha_{W \otimes W, \mathbb N} = T^2(\sigma \circ (\sigma \times_M \sigma))  \circ \langle T(0) \circ 0 \circ \pi_0 , T(0) \circ \lambda \circ \pi_1 , T(\lambda) \circ 0 \circ \pi_2 , T(\lambda) \circ \lambda \circ \pi_3 \rangle.
    $$

\begin{proposition}\label{prop:induce_on_objects}
Given a differential bundle $E \xrightarrow{q} M$ in $\mathbb X$, Proposition \ref{prop:induce_on_objects_part_1} and Equations \ref{eqn:alpha_on_W_N1}-\ref{eqn:alpha_on_tensor} define a \nice{} functor 
$$
\Ind(E) = (F_E,\hat  \alpha): (\mathbb N^\bullet, D_\bullet) \to (\mathbb X, \Te).
$$

\end{proposition}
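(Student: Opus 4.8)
The data is already half in place: $F_E$ is a functor by Proposition \ref{prop:induce_on_objects_part_1}, and Definition \ref{def:alpha_hat_for_induce} prescribes $\hat\alpha$. So the plan is to check, in order, (1) that the recursive clauses \ref{eqn:alpha_on_W_N1}--\ref{eqn:alpha_on_tensor} are consistent and hence genuinely define $\hat\alpha_{A,\mathbb N^l}$ for every Weil algebra $A$; (2) that $\hat\alpha$ is a natural transformation $F_E\circ D_\bullet\Rightarrow \T_\bullet\circ(1_\weil\times F_E)$ in both variables; (3) that $(F_E,\hat\alpha)$ satisfies the coherence laws of Definition \ref{def:linear_functors} (in the oplax direction); and (4) the two extra conditions of Definition \ref{def:differential_functors}, namely that $\T_A\circ F_E$ preserves pullbacks over the terminal object and that each $\hat\alpha_{A,-}$ is Cartesian.

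For (1) the key input is that every Weil algebra decomposes uniquely as $W^{n_1}\otimes\cdots\otimes W^{n_k}$ (Remark \ref{rem:characterization_of_weil_objects}): the base clauses pin down $\hat\alpha_{W^n,\mathbb N^l}$ and $\hat\alpha_{\mathbb N,\mathbb N^1}$, and clause \ref{eqn:alpha_on_tensor} extends along $\otimes$. The only thing to verify is that \ref{eqn:alpha_on_tensor} is coherently associative, i.e. that $\hat\alpha_{(A\otimes B)\otimes C}$ and $\hat\alpha_{A\otimes(B\otimes C)}$ agree under the associator; this follows from functoriality of the $\T_A$ together with the compatibility of the fraction calculus of Proposition \ref{prop:fubini} with composition. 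In fact clause \ref{eqn:alpha_on_tensor} is precisely the oplax--comonoidal recursion, so this coherence is at the same time part of (3): the $\otimes$-coherence law of Definition \ref{def:linear_functors} holds by construction, and the unit law holds because $\hat\alpha_{\mathbb N,\mathbb N^l}$ unwinds via the fraction calculus to the identity $1_{E_l}$.

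For (2), naturality in the $\mathbb N^\bullet$-variable reduces, via Lemma \ref{lemma:morph_in_N_bullet}, to naturality of $\hat\alpha_{W,-}$ against the generating morphisms $\sigma$, $\zeta$, the projections $\pi_i$ and the diagonals $\Delta$ of $\mathbb N^\bullet$; writing $\hat\alpha_{W,\mathbb N^1}=T(\sigma)\circ\langle 0\circ\pi_0,\lambda\circ\pi_1\rangle$ explicitly, each such square follows from the additive bundle morphism axioms for $(\lambda,\zeta)$ and $(\lambda,0)$ in Definition \ref{def:differential bundle} together with naturality of $0,+,p$. Naturality in the \weil-variable reduces, since the morphisms of \weil are built from the maps $p,0,+,\ell,c$ of Remark \ref{rem:double_use_p+0cell} and the product structure maps, to checking that $\hat\alpha_{W,-}$ intertwines these with the corresponding bundle structure: compatibility with $p,0,+$ is again that $(\lambda,\zeta)$ and $(\lambda,0)$ are additive bundle morphisms, while compatibility with $\ell$ and $c$ uses the lift identity $\ell_E\circ\lambda=T(\lambda)\circ\lambda$ and the tangent-category axioms for $\ell,c$ from Definition \ref{def:tangent_cat}. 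The higher components $\hat\alpha_{W^n,\mathbb N^l}$ and $\hat\alpha_{A,\mathbb N^l}$ then inherit naturality by passing to pullback powers (using Proposition \ref{prop:fubini} and preservation of the differential-bundle pullbacks) and through clause \ref{eqn:alpha_on_tensor}.

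Finally (4): pullbacks over the terminal object $\mathbb N^0$ in $\mathbb N^\bullet$ are products $\mathbb N^a\times\mathbb N^b=\mathbb N^{a+b}$, and $F_E$ sends the product cone to $E_{a+b}\cong E_a\times_M E_b$, a genuine pullback over $M=F_E(\mathbb N^0)$; this pullback is preserved by $\T_A$ because $\T_A$ is a composite of the functors $\T_{W^{n_i}}$, each of which preserves the differential-bundle and foundational pullbacks appearing here (Definitions \ref{def:differential bundle} and \ref{def:tangent_cat}). For the Cartesian condition, the decisive observation is that the naturality square of $\hat\alpha_{W,-}$ along the unique map $\mathbb N^1\to\mathbb N^0$ is exactly Diagram \ref{diagram:universality} for the bundle $E$ in its $\langle 0\circ\pi_0,\lambda\circ\pi_1\rangle$-form, hence a pullback preserved by powers of $T$ by the universality of the vertical lift; the square along a general morphism of $\mathbb N^\bullet$ is assembled from copies of this one using the additive structure of the $E_n$ and the Fubini calculus of Proposition \ref{prop:fubini}, and the square for $A\otimes B$ is the vertical pasting of $\T_A$ applied to the square for $B$ with the square for $A$ at $D_B(f)$, so Cartesian-ness propagates by induction on the Weil decomposition. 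I expect steps (2)--(3) to be the real work: the genuinely non-formal content is the interaction of $\ell_E\circ\lambda=T(\lambda)\circ\lambda$ with the $\ell$- and $c$-coherences, whereas everything else is bookkeeping with pullback powers. A shortcut that isolates exactly this difficulty is to prove first only that the single component $(F_E,\hat\alpha_W)$ is a \emph{lax tangent functor} $(\mathbb N^\bullet,D)\to(\mathbb X,T)$ — the five diagrams of Definition \ref{def:lax_tangent_functor} — then invoke Theorem \ref{thm:general_garner} to extend it to an oplax \weil-linear functor and Lemma \ref{lem:lineators_are_unique} to identify that extension with the explicitly-defined $\hat\alpha$, which collapses the bulk of (1)--(3) to those five diagrams.
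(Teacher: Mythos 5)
Your main line of argument is essentially the paper's own: the paper also reduces naturality to the generators $\sigma_k,\zeta,\pi_i,!$ of $\mathbb N^\bullet$ and $p,0,+,\ell,c$ of \weil{} (carrying out the resulting diagram chases in Appendix \ref{Appendix:naturality}), and establishes the Cartesian condition exactly as you do, by recognizing the naturality square of $\hat\alpha_{W,-}$ along $!:\mathbb N^1\to\mathbb N^0$ as the universality-of-the-vertical-lift pullback and propagating along pullback powers and the tensor decomposition of Weil algebras. The ingredients you name for the individual squares (the additive bundle morphism axioms for $(\lambda,0)$ and $(\lambda,\zeta)$, naturality of $0,+,p,c$, and $\ell_E\circ\lambda=T(\lambda)\circ\lambda$) are the ones the appendix actually uses, so the sketch is correct at the same level of granularity as the paper's main-text proof.

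One caution about your proposed shortcut: Lemma \ref{lem:lineators_are_unique} applies only to natural transformations that already satisfy the lineator conditions of Definition \ref{def:linear_functors}, so invoking it to identify the Garner-produced lineator with the explicitly defined $\hat\alpha$ presupposes that the explicit $\hat\alpha$ is natural in both variables --- which is precisely the content of your step (2). The shortcut therefore does not collapse the naturality verification; it only spares you re-deriving the $\otimes$-coherence, which Equation \ref{eqn:alpha_on_tensor} already gives by construction. The direct route you describe first is the one that actually closes the proof.
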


\begin{proof}
By Definition, $F_E$ preserves pullbacks over the terminal object.

The construction of $\hat \alpha$ defines a morphism in $\mathbb X$ for every \Weil{} algebra $A$ and every $\mathbb N^k$ in $\mathbb N^\bullet$ because every \Weil{} algebra can be expressed as a coproduct of product powers of $W$, and we have defined $\hat \alpha$ on those values.

To show that the transformation $\hat\alpha$ is natural, we show that the construction is natural with respect to the generating maps of $\mathbb N^\bullet$ as in Lemma 3.22.  This proof essentially follows from the fact that $\alpha$ is constructed out of these maps, and naturality with respect to them follows as a consequence.  However, checking this carefully is technical and tedious.  For this reason, we postpone full details of this proof until Appendix \ref{Appendix:naturality}.

The functor $F_E$ is constructed to preserve pullbacks over the terminal object and $\T^\mathbb X_A$ preserves them as $E$ is a differential object, thus the tangent structure commutes with these pullbacks. Next we need to show that $\hat \alpha$ is Cartesian, i.e. that for any Weil algebra $A$, the naturality diagrams of $\hat \alpha_{A,-}$ with respect to all morphisms in $\mathbb N^\bullet$ are pullback diagrams.

The naturality diagram of $\hat \alpha_{W^n,-}$ with respect to $\Delta_k: \mathbb N \to \mathbb N^k$,
\[
\begin{tikzcd}[column sep=large]
	{T(E_n)} & {T_n(E)} \\
	{T(E_{nk})} & {T_n(E_k)}
	\arrow["{\hat \alpha_{W^n,\mathbb N}}", from=1-1, to=1-2]
	\arrow["{T^n(\Delta_k)}", from=1-2, to=2-2]
	\arrow["{\hat \alpha_{W^n,\mathbb N^k}}"', from=2-1, to=2-2]
	\arrow["{T (\Delta_k)}"', from=1-1, to=2-1]
\end{tikzcd}
\]
is a pullback because in each component it is the $n$-th pullback power of 
\[\begin{tikzcd}[column sep=large]
	{T(E)} & {T_n(E)} \\
	{T(E_{k})} & {T_n(E_k)}
	\arrow["{\hat \alpha_{W^n,\mathbb N}}", from=1-1, to=1-2]
	\arrow["{T(\Delta_k)}", from=1-2, to=2-2]
	\arrow["{\hat \alpha_{W,\mathbb N^k}}"', from=2-1, to=2-2]
	\arrow["{T (\Delta_k)}"', from=1-1, to=2-1]
\end{tikzcd}
\qquad \text{over} \qquad 
\begin{tikzcd}[column sep=large]
	{T(M)} & E \\
	{T(M)} & {E_k}.
	\arrow["{\hat \alpha_{\mathbb N,\mathbb N}}", from=1-1, to=1-2]
	\arrow["{\Delta_k}", from=1-2, to=2-2]
	\arrow["{\hat \alpha_{W^n,\mathbb N^k}}"', from=2-1, to=2-2]
	\arrow["{1_{T(M)}}"', from=1-1, to=2-1]
\end{tikzcd}\]
Analogously, the diagram for $\sigma_k : \mathbb N^k \to \mathbb N$ is a pullback diagram.
Lemma \ref{lemma:morph_in_N_bullet} shows that all morphisms in $\mathbb N^\bullet$ are generated from $\sigma_k$ and $\Delta_k$. 
Thus the naturality diagrams of $\hat \alpha_{W^n,-}$ with respect to any morphisms in $\mathbb N^\bullet$ are pullbacks.

If the naturality diagrams of $\hat \alpha_A$ and $\hat \alpha_B$ are pullbacks, then the naturality diagrams of $\hat \alpha_{A \otimes B}$ are pullbacks beacuse both subsqaures are pullbacks:
\[\begin{tikzcd}
	{F_E \circ \T_A \circ \T_B(X)} & {\T_A \circ F_E \circ \T_B (X)} & {\T_A  \circ \T_B \circ F_E  (X)} \\
	{F_E \circ \T_A \circ \T_B(Y)} & {\T_A \circ F_E \circ \T_B (Y)} & {\T_A  \circ \T_B \circ F  (Y)}
	\arrow[from=1-1, to=2-1]
	\arrow[from=1-2, to=2-2]
	\arrow[from=1-3, to=2-3]
	\arrow["{\hat \alpha_{A, \T_B (X)}}", from=1-1, to=1-2]
	\arrow["{\hat \alpha_{A, \T_B (Y)}}"', from=2-1, to=2-2]
	\arrow["{\T_A(\hat \alpha_{B, X})}", from=1-2, to=1-3]
	\arrow["{\T_A(\hat \alpha_{B, X})}"', from=2-2, to=2-3]
\end{tikzcd}\]
The left square is a pullback by the assumption that the naturality diagrams of $\hat \alpha_A$ are pullbacks. The right square is a pullback since the naturality diagrams of $\hat \alpha_B$ are pullbacks built out of the universality diagram for the vertical lift. The universality diagram is a pullback preserved by the tangent functor, thus the right square is still a pullback.

Since every object of $\weil$ is a tensor product of powers of the dual numbers $W$, this shows that for any Weil algebra $A$, the naturality diagrams of $\hat \alpha_{A,-}$ are pullbacks, i.e. $\hat \alpha_{A,-}$ is Cartesian. This was all that remained to show that $(F_E, \hat \alpha)$ is a \nice{} functor. 
\end{proof}

\begin{theorem}\label{thm:induce_on_morphisms}
Let $\mathbb X$ be a tangent category and $(E,M,q,\zeta, \sigma, \lambda)$ and $(E',M',q',\zeta', \sigma', \lambda')$ be differential bundles in $\mathbb X$. Denote $\Ind(E) = (F_E, \hat \alpha)$ and $\Ind(E') = (F_E', \hat \alpha')$.
\begin{enumerate}[label = (\alph*)]
\item An additive morphism of differential bundles induces a natural transformation.
$\varphi : F_E \Rightarrow F_{E'}$
between the induced functors
\item A linear morphism of differential bundles induces a linear natural transformation.
\item This defines two functors 
$$
\Ind: \mathrm{\DBun_\add}(\mathbb X) \to \DiffFun (\mathbb N^\bullet, \mathbb X)
$$
and
$$
\Ind: \mathrm{\DBun_\lin}(\mathbb X) \to \DiffFun_\lin (\mathbb N^\bullet, \mathbb X).
$$
\end{enumerate}
\end{theorem}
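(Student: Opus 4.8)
The plan is to build on Proposition \ref{prop:induce_on_objects}: given an (additive) bundle morphism $(f,g)\colon(E,M,q,\zeta,\sigma,\lambda)\to(E',M',q',\zeta',\sigma',\lambda')$, I would define the candidate transformation $\varphi\colon F_E\Rightarrow F_{E'}$ by $\varphi_{\mathbb N^0}:=g$ and $\varphi_{\mathbb N^k}:=f_k=\langle f\circ\pi_0,\dots,f\circ\pi_{k-1}\rangle\colon E_k\to E'_k$ for $k\ge 1$. This is well defined because $q'\circ f=g\circ q$ makes the maps $q'\circ f\circ\pi_i$ coincide, so $f_k$ factors through the pullback power $E'_k$.

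For part (a), I would check naturality of $\varphi$ against an arbitrary matrix $A=(a_{ij})\colon\mathbb N^m\to\mathbb N^n$, i.e. $f_n\circ F_E(A)=F_{E'}(A)\circ f_m$. In coordinate $h$ the right-hand side is $\sum_j a_{hj}\cdot(\pi_j\circ f_m)=\sum_j a_{hj}\cdot(f\circ\pi_j)$ (using $\pi_j\circ f_m=f\circ\pi_j$, Definition \ref{def:induce_on_objects_part_1}, and Lemma \ref{lem:module_structure_on_hom}(2)), while the left-hand side is $f\circ\bigl(\sum_j a_{hj}\cdot\pi_j\bigr)$. These agree because additivity of $(f,g)$ — i.e. $f\circ\sigma=\sigma'\circ f_2$ and $f\circ\zeta=\zeta'\circ g$ — says precisely that post-composition with $f$ commutes with the fiberwise additions and zero sections underlying the $\mathbb N$-module structures of Lemma \ref{lem:module_structure_on_hom}, hence with all finite sums and scalar multiples built from them. (Alternatively one reduces to the generators $\Delta_k$ and $\sigma_k$ of $\mathbb N^\bullet$ via Lemma \ref{lemma:morph_in_N_bullet}.) Since a \weil-natural transformation is, by Definition \ref{def:actegory_linear_transformations}, nothing more than a natural transformation of underlying functors, this makes $\varphi$ a morphism in $\DiffFun(\mathbb N^\bullet,\mathbb X)$.

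For part (b), assume in addition that $(f,g)$ is linear, so $\lambda'\circ f=T(f)\circ\lambda$. I must verify the linearity square of Definition \ref{def:actegory_linear_transformations}, oriented for the oplax lineators of Definition \ref{def:differential_functors}: $\Te(1_A,\varphi_{-})\circ\hat\alpha_{A,-}=\hat\alpha'_{A,-}\circ\varphi_{D_A(-)}$ for every Weil algebra $A$. Since $\hat\alpha$ was constructed inductively from $\hat\alpha_{W,\mathbb N^1}=T(\sigma)\circ\langle 0\circ\pi_0,\lambda\circ\pi_1\rangle$ by pullback powers over $E$ and over $M$ and by the tensor clause $\hat\alpha_{A\otimes B}=\T_A(\hat\alpha_B)\circ\hat\alpha_{A,D_B(-)}$ (Equations \ref{eqn:alpha_on_W_N1}--\ref{eqn:alpha_on_tensor}), and every object of \weil{} is a tensor of product powers of $W$, it suffices to verify the square on the generators. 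At $(W,\mathbb N^1)$ the computation is $T(f)\circ T(\sigma)\circ\langle 0\circ\pi_0,\lambda\circ\pi_1\rangle=T(\sigma')\circ\langle T(f)\circ 0\circ\pi_0,\,T(f)\circ\lambda\circ\pi_1\rangle=T(\sigma')\circ\langle 0'\circ\pi_0,\lambda'\circ\pi_1\rangle\circ f_2=\hat\alpha'_{W,\mathbb N^1}\circ f_2$, using additivity to push $T(f)$ through $T(\sigma)$, naturality of $0$, and linearity of $(f,g)$; the cases $X=\mathbb N^0$ and $A=\mathbb N$ are immediate. Propagating this identity through the pullback-power and tensor clauses defining $\hat\alpha$ then gives the square for all $A$, so $\varphi$ is a linear \weil-natural transformation, i.e. a morphism in $\DiffFun_\lin(\mathbb N^\bullet,\mathbb X)$.

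Part (c) is then bookkeeping: $\Ind$ on objects is Proposition \ref{prop:induce_on_objects}, the identity $(1_E,1_M)$ yields $(1_E)_k=1_{E_k}$ — the identity natural transformation — and $((f',g')\circ(f,g))_k=f'_k\circ f_k$ is the vertical composite, so $\Ind$ preserves identities and composition; parts (a) and (b) identify the targets $\DBun_\add(\mathbb X)\to\DiffFun(\mathbb N^\bullet,\mathbb X)$ and $\DBun_\lin(\mathbb X)\to\DiffFun_\lin(\mathbb N^\bullet,\mathbb X)$. The only genuine obstacle is the propagation step in part (b): showing the linearity square is stable under pullback powers over $E$ and over $M$ and under the tensor clause for $\hat\alpha$. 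This is routine but tedious in the same way as the Cartesianness verification in the proof of Proposition \ref{prop:induce_on_objects} deferred to Appendix \ref{Appendix:naturality}, and I would carry it out in parallel with that argument.
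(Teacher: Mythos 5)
Your proposal is correct and follows essentially the same route as the paper: the same componentwise definition $\varphi_{\mathbb N^0}=g$, $\varphi_{\mathbb N^k}=f_k$, naturality deduced from additivity of $(f,g)$, linearity of $\varphi$ checked on the generating component $\hat\alpha_{W,\mathbb N^1}$ (using naturality of $0$ and $\lambda'\circ f = T(f)\circ\lambda$) and then propagated through the pullback-power and tensor clauses defining $\hat\alpha$, and the same bookkeeping for part (c). The only cosmetic difference is that in part (a) you verify naturality against an arbitrary matrix directly via the $\mathbb N$-module structure of Lemma \ref{lem:module_structure_on_hom}, whereas the paper reduces to the generators $\sigma_k$ and $\Delta_k$ via Lemma \ref{lemma:morph_in_N_bullet}; these are the same argument in different packaging, and you note the latter as an alternative.
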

\begin{proof}
\begin{enumerate}[label = (\alph*)]
    \item 
Suppose $(f,g): (E,M,p,\zeta, \sigma , \lambda) \to (E',M',p',\zeta', \sigma' , \lambda')$ is an additive morphism of differential bundles, i.e. a pair of maps $f: E \to E', g: M \to M'$ compatible with the projection, the addition and zero as in Definition \ref{def:differential_bundle_morphisms}. We define a natural transformation, $\varphi:F_E\Rightarrow F_{E'}$, by defining its $\mathbb N^k$ components for all $k \in \mathbb N$ as follows: 
$$
\varphi_{\mathbb N^k} = \left\lbrace
\begin{matrix}
g: M \to M' & \text{if }k=0 \\
f_k: E_k \to E'_k & \text{if }k>0. 
\end{matrix}\right.
$$
The transformation $\varphi$ 
is natural with respect to $+_k: \mathbb N^k \to \mathbb N$  (in particular for $k=0$, $0:\mathbb N^0 \to \mathbb N$)  and $\Delta_k: \mathbb N \to \mathbb N^k$ (in particular for $k=0$, $p:\mathbb N \to \mathbb N^0$). That is, the diagrams
\[\begin{tikzcd}
	{F_E(\mathbb N^k)} & {F_{E'}(\mathbb N^k)} & {F_E(\mathbb N^0)} & {M'} \\
	{F_E(\mathbb N)} & {F_{E'}} & {F_E(\mathbb N)} & {F_{E'}(\mathbb N)} \\
	{F_E(\mathbb N^1)} & {F_{E'}(\mathbb N^1)} & {F_E(\mathbb N^1)} & {F_{E'}(\mathbb N^1)} \\
	{F_E(\mathbb N^k)} & {F_{E'}(\mathbb N^k)} & {F_{E}(\mathbb N^0)} & {F_{E'}(\mathbb N^0)}
	\arrow["{\varphi_{\mathbb N^k}}", from=1-1, to=1-2]
	\arrow["{F_E(\sigma_k)}"', from=1-1, to=2-1]
	\arrow["{F_{E'}(\sigma_k)}", from=1-2, to=2-2]
	\arrow["g", from=1-3, to=1-4]
	\arrow["{F_{E}(\zeta)}"', from=1-3, to=2-3]
	\arrow["{F_{E'}(\zeta')}", from=1-4, to=2-4]
	\arrow["{\varphi_{\mathbb N^1}}"', from=2-1, to=2-2]
	\arrow["f"', from=2-3, to=2-4]
	\arrow["f", from=3-1, to=3-2]
	\arrow["{F_{E}(\Delta_k)}"', from=3-1, to=4-1]
	\arrow["{F_{E'}(\Delta_k)}", from=3-2, to=4-2]
	\arrow["f", from=3-3, to=3-4]
	\arrow["{F_{E}(q)}"', from=3-3, to=4-3]
	\arrow["{F_{E'}(q)}", from=3-4, to=4-4]
	\arrow["{f_k}"', from=4-1, to=4-2]
	\arrow["g"', from=4-3, to=4-4]
\end{tikzcd}\]
commute. Evaluating the functor $F_E$, we see that they are in fact
\[\begin{tikzcd}
	{E_k} & {E_k'} & M & {M'} & E & {E'} & E & {E'} \\
	E & {E'} & E & {E'} & {E_k} & {E_k'} & M & {M'}
	\arrow["g", from=1-3, to=1-4]
	\arrow["f"', from=2-3, to=2-4]
	\arrow["f"', from=2-1, to=2-2]
	\arrow["{f_k}", from=1-1, to=1-2]
	\arrow["g"', from=2-7, to=2-8]
	\arrow["f", from=1-7, to=1-8]
	\arrow["f", from=1-5, to=1-6]
	\arrow["{f_k}"', from=2-5, to=2-6]
	\arrow["{\sigma_k}"', from=1-1, to=2-1]
	\arrow["{\sigma_k'}", from=1-2, to=2-2]
	\arrow["\zeta"', from=1-3, to=2-3]
	\arrow["{\zeta'}", from=1-4, to=2-4]
	\arrow["{\Delta_k}"', from=1-5, to=2-5]
	\arrow["{\Delta_k}", from=1-6, to=2-6]
	\arrow["q"', from=1-7, to=2-7]
	\arrow["q'", from=1-8, to=2-8]
\end{tikzcd}.\] 
 The first diagram commutes because $(f,g)$ is additive and thus $\sigma$ and $\sigma'$ are compatible with $(f,g)$. The second diagram commutes because $(f,g)$ is additive and thus $\zeta$ and $\zeta'$ are compatible with $(f,g)$. The third diagram commutes by the definition of $f_k$. The fourth diagram commutes because $q$ and $q'$ are compatible with $(f,g)$.

The two  classes of morphisms $\Delta_k$ and $\sigma_k$ (including $\Delta_0 = q$ and $\sigma_0 = \zeta$) generate all morphisms in $\mathbb N^\bullet$  as their induced map into pullbacks and composition, according to Lemma \ref{lemma:morph_in_N_bullet}.

All morphisms in $\mathbb N^\bullet$ can be decomposed into morphisms whose components are composites of $\sigma_k$'s and $\Delta_k$'s using pullbacks and projections.  The pullbacks and projections are preserved by $F_E$ since $F_E$ is a \nice{} functor.  Therefore, $\varphi$ is natural with respect to all morphisms in $\mathbb N^\bullet$.

\item If $(f,g)$ is a linear morphism,  the diagram
\[\begin{tikzcd}[column sep=huge]
	{T(E)} & {T(E')} \\
	E & {E'}
	\arrow["{T(\varphi_{\mathbb N^1})=T(f)}", from=1-1, to=1-2]
	\arrow["{F_E(\lambda)=\lambda}", from=2-1, to=1-1]
	\arrow["{\varphi_{\mathbb N^1}=f}"', from=2-1, to=2-2]
	\arrow["{\lambda' = F_{E'}(\lambda)}"', from=2-2, to=1-2]
\end{tikzcd}\]
commutes by Definition \ref{def:differential_bundle_morphisms}. Thus $\varphi \circ \lambda = \lambda' \circ \varphi$.
Because $0$ is natural  with respect to $f$, the diagram
\[\begin{tikzcd}[column sep=huge]
	{T(E)} & {T(E')} \\
	E & E
	\arrow["{T(\varphi_{\mathbb N^1}) = T(f)}", from=1-1, to=1-2]
	\arrow["{0_E}", from=2-1, to=1-1]
	\arrow["{\varphi_{\mathbb N^1} = f}"', from=2-1, to=2-2]
	\arrow["{0_{E'}}"', from=2-2, to=1-2]
\end{tikzcd}\]
commutes. Recall from Definition \ref{def:alpha_hat_for_induce}, that the natural transformation $\hat \alpha: F \circ T_\bullet \Rightarrow T_\bullet \circ (1_ \weil \times F)$ is constructed using $\sigma , 0$ and $\lambda$ in Definition \ref{def:alpha_hat_for_induce}. The natural transformation $\varphi$ commutes with all parts of $\hat \alpha$ and therefore with $\hat \alpha$, e.g. for $W^2 \in \weil{}$
\[\begin{tikzcd}[column sep=large]
	{E_3} &&&& {T_2(E)} \\
	{E_3'} &&&& {T_2(E')}
	\arrow["{\alpha_{W^2,\mathbb N}=(T(\sigma) \circ \langle 0 \circ \pi_0, \lambda\circ \pi_1 \rangle)_2}", from=1-1, to=1-5]
	\arrow["{\alpha'_{W^2,\mathbb N}=(T(\sigma') \circ \langle 0 \circ \pi_0, \lambda'\circ \pi_1 \rangle)_2}"', from=2-1, to=2-5]
	\arrow["{\varphi_{\mathbb N^3}}"', from=1-1, to=2-1]
	\arrow["{T_2(\varphi_\mathbb N)}", from=1-5, to=2-5]
\end{tikzcd}\]
commutes.
This is what is required for it to be linear.
\item The assignment on objects and morphisms follows from (a), (b) and Proposition \ref{prop:induce_on_objects}. Functoriality holds, since taking the pullback of morphisms preserves compositions and the identity.
\end{enumerate}
\end{proof}
We will now see that $\varphi$ is uniquely defined as soon as we specify $\varphi_{\mathbb N^1} $ and $\varphi_{\mathbb N^0}$ as we did in the proof of Theorem \ref{thm:induce_on_morphisms}. This is important because we will use it to prove that \Eval{} from Theorem \ref{thm:Eval_Dobj_DiffFun_DBun} is faithful.
\begin{proposition}\label{prop:nat_trafos_generated_on}
Natural transformations between $\nice$ functors $(F,\alpha),(F',\alpha'): \mathbb N^\bullet \to \mathbb X$ are determined by their $\mathbb N^0$ and $\mathbb N^1$ components. 
\end{proposition}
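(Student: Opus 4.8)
The plan is to exploit the fact that every object of $\mathbb N^\bullet$ is a finite power of the generating object $\mathbb N^1$, together with the requirement from Definition \ref{def:differential_functors} that a \nice{} functor preserve pullbacks over the terminal object.

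First I would record that in $\mathbb N^\bullet$ the object $\mathbb N^k$ is the $k$-fold product of $\mathbb N^1$, i.e.\ the $k$-fold pullback power of the unique map $q\colon \mathbb N^1\to\mathbb N^0$ over the terminal object $\mathbb N^0$, with product projections $\pi_1,\dots,\pi_k\colon \mathbb N^k\to\mathbb N^1$ (the $1\times k$ matrices with a single $1$). Since $(F,\alpha)$ and $(F',\alpha')$ are \nice{} functors, the clause of Definition \ref{def:differential_functors}(a) applied to the monoidal unit $\mathbb N\in\weil$ (for which $\T'_{\mathbb N}\cong 1_{\mathbb X'}$) says that $F$ and $F'$ preserve pullbacks over $\mathbb N^0$; hence $F(\mathbb N^k)$ (resp.\ $F'(\mathbb N^k)$) is the $k$-fold pullback power of $F(q)$ (resp.\ $F'(q)$) over $F(\mathbb N^0)$ (resp.\ $F'(\mathbb N^0)$), with projections $F(\pi_i)$ (resp.\ $F'(\pi_i)$).

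Next, given a \weil-natural transformation $\varphi\colon F\Rightarrow F'$, I would fix $k\ge 1$ and consider the family of morphisms $\varphi_{\mathbb N^1}\circ F(\pi_i)\colon F(\mathbb N^k)\to F'(\mathbb N^1)$ for $i=1,\dots,k$. These form a cone over the diagram exhibiting $F'(\mathbb N^k)$ as the pullback power of $F'(q)$, because all their composites with $F'(q)$ agree: $F'(q)\circ\varphi_{\mathbb N^1}\circ F(\pi_i)=\varphi_{\mathbb N^0}\circ F(q\circ\pi_i)$, and $q\circ\pi_i$ is the unique map $\mathbb N^k\to\mathbb N^0$, independent of $i$. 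By the universal property there is a unique morphism $F(\mathbb N^k)\to F'(\mathbb N^k)$ whose $i$-th projection is $\varphi_{\mathbb N^1}\circ F(\pi_i)$; naturality of $\varphi$ with respect to each $\pi_i$ shows that $\varphi_{\mathbb N^k}$ is such a morphism, so $\varphi_{\mathbb N^k}=\langle\varphi_{\mathbb N^1}\circ F(\pi_1),\dots,\varphi_{\mathbb N^1}\circ F(\pi_k)\rangle$ is completely determined by $\varphi_{\mathbb N^1}$. For $k=0$ the component $\varphi_{\mathbb N^0}$ is part of the given data, and the case $k=1$ is vacuous, so every component of $\varphi$ is determined by $\varphi_{\mathbb N^0}$ and $\varphi_{\mathbb N^1}$.

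The only step requiring genuine care is the first one: verifying that a \nice{} functor really does send the product cone $\mathbb N^k\to\mathbb N^1$ to a pullback-power cone on $F(\mathbb N^1)\to F(\mathbb N^0)$, which is exactly the preservation-of-pullbacks-over-the-terminal-object hypothesis; everything afterward is the uniqueness half of a universal property. I would also note in passing that $\varphi_{\mathbb N^0}$ cannot in general be recovered from $\varphi_{\mathbb N^1}$ (the map $F(q)$ need not be epic), which is why both components must be prescribed, and that this result is precisely what makes the argument that $\Eval_{\mathbb N^\bullet}$ is faithful go through.
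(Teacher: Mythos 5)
Your proof is correct and follows essentially the same route as the paper's: both arguments rest on naturality of $\varphi$ with respect to the projections $\pi_i$ together with the fact that a \nice{} functor (via the $A=\mathbb N$ case of Definition \ref{def:differential_functors}) preserves pullbacks over the terminal object, so that morphisms into $F'(\mathbb N^k)$ are determined by their composites with the $F'(\pi_i)$. The only cosmetic difference is that you exhibit $\varphi_{\mathbb N^k}$ directly as the map induced into the $k$-fold pullback power, whereas the paper runs an induction on $k$ using the binary pullback of $F'(\mathbb N^1)$ and $F'(\mathbb N^{k-1})$ over $F'(\mathbb N^0)$.
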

\begin{proof}
Let $\varphi, \varphi' : (F,\alpha) \to (F',\alpha')$ be natural transformations between \nice{} functors such that $\varphi_{\mathbb N^1} = \varphi'_{\mathbb N^1}$ and $\varphi_{\mathbb N^0} = \varphi'_{\mathbb N^0}$. We will show by induction that for all $k\geq 0$, $\varphi_{\mathbb N^k} = \varphi'_{\mathbb N^k}$. The base cases hold by assumption. In the inductive step, due to naturality of $\varphi$
\[\begin{tikzcd}
	{F(\mathbb N)} & {F'(\mathbb N)} \\
	{F(\mathbb N^k)} & {F'(\mathbb N^k)} \\
	{F'(\mathbb N^{k-1})} & {F'(\mathbb N^{k-1})}
	\arrow["{\varphi_{\mathbb N}}", from=1-1, to=1-2]
	\arrow["{\varphi_{\mathbb N^k}}", from=2-1, to=2-2]
	\arrow["{\varphi_{\mathbb N^{k-1}}}", from=3-1, to=3-2]
	\arrow["{F(\pi_0)}", from=2-1, to=1-1]
	\arrow["{F'(\pi_0)}"', from=2-2, to=1-2]
	\arrow["{F'(\pi_0)}"', from=2-1, to=3-1]
	\arrow["{F'(\pi_1)}", from=2-2, to=3-2]
\end{tikzcd}\]
commutes. Replacing $\varphi$ with $\varphi'$ produces an analogous commuting diagram. Therefore, since $\varphi_{\mathbb N^1}=\varphi'_{\mathbb N^1}$ and  $\varphi_{\mathbb N^{k-1}} = \varphi'_{\mathbb N^{k-1}}$ by the inductive hypothesis, we see that $F'(\pi_0)  \circ \varphi_{\mathbb N^k} = F'(\pi_0) \circ \varphi'_{\mathbb N^k}$ and $F'(\pi_1) \circ \varphi_{\mathbb N^k}  = F'(\pi_1) \circ \varphi'_{\mathbb N^k} $.

Since $F'$ preserves pullbacks over the terminal object, $F'(\mathbb N^k)$ is a pullback and thus in
\[\begin{tikzcd}[column sep=huge]
	{F(\mathbb N^k)} \\
	& {F'(\mathbb N^k)} & {F'(\mathbb N^1)} \\
	& {F'(\mathbb N^{k-1})} & {F'(\mathbb N^0)}
	\arrow["{\varphi_{\mathbb N^k}}", shift left, from=1-1, to=2-2]
	\arrow["{\varphi'_{\mathbb N^k}}"', shift right, from=1-1, to=2-2]
	\arrow["{\varphi_{\mathbb N^1}}", curve={height=-12pt}, from=1-1, to=2-3]
	\arrow["{\varphi_{\mathbb N^{k-1}}}"', curve={height=12pt}, from=1-1, to=3-2]
	\arrow["{F(\pi_0)}", from=2-2, to=2-3]
	\arrow["{F(\pi_1)}"', from=2-2, to=3-2]
	\arrow["\lrcorner"{anchor=center, pos=0.125}, draw=none, from=2-2, to=3-3]
	\arrow[from=2-3, to=3-3]
	\arrow[from=3-2, to=3-3]
\end{tikzcd}\]
 the morphisms $\varphi_{\mathbb N^k}$ and $\varphi'_{\mathbb N^k}$ are determined by their compositions with $F'(\pi_0)$ and $F'(\pi_1)$. As these compositions coincide, the morphisms $\varphi_{\mathbb N^k}$ and $\varphi'_{\mathbb N^k}$ are equal. 
\end{proof}

\section{Establishing the equivalence}\label{sec:equivalence}

In the previous section we mentioned that \Induce{} is the inverse of $Eval_\mathbb N$. In this section we will prove how exactly they are inverse to each other. This will lead to an equivalence between categories of differential bundles and categories of lax tangent functors.

\begin{lemma}\label{lemma_composition_e_i}
Let $\mathbb X$ be a tangent category.
\begin{enumerate}[label = (\alph*)]
\item The composition of \Eval{} and \Induce{} is the identity functor: 
$$
\Eval \circ \Ind = 1_{\DBun(\mathbb X)}
$$
\item \Eval{} is essentially surjective.
\item \Eval{} is full.
\end{enumerate}
\end{lemma}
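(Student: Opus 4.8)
The plan is to unwind the definitions of $\Eval$ (Definition~\ref{def:Eval}) and $\Ind$ (Definitions~\ref{def:induce_on_objects_part_1} and~\ref{def:alpha_hat_for_induce}) on the differential object $\mathbb N^1$ of $\mathbb N^\bullet$ from Example~\ref{ex:N_bullet_differential}, together with Lemma~\ref{lemma:morph_in_N_bullet}. For part~(a) I would first check equality on objects. Fix a differential bundle $(E,M,q,\zeta,\sigma,\lambda)$ and write $\Ind(E)=(F_E,\hat\alpha)$. Then $\Eval(\Ind(E))$ has total space $F_E(\mathbb N^1)=E_1=E$ and base $F_E(\mathbb N^0)=E_0=M$, and its projection, fibrewise addition and zero section are $F_E$ applied to the structure morphisms $q,\sigma,\zeta$ of $\mathbb N^1$; unwinding Definition~\ref{def:induce_on_objects_part_1} inside the $\mathbb N$-module $\Hom_{/M}(E_n,E)$ of Lemma~\ref{lem:module_structure_on_hom} returns exactly the structure morphisms $q,\sigma,\zeta$ of $E$. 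The only step requiring thought is the lift. Here $F_E(\lambda)=\langle\zeta\circ q,1_E\rangle\colon E\to E_2$ and $\hat\alpha_{W,\mathbb N^1}=T(\sigma)\circ\langle 0\circ\pi_0,\lambda\circ\pi_1\rangle$ by Equation~\ref{eqn:alpha_on_W_N1}, so composing into the pullback gives $\hat\alpha_{W,\mathbb N^1}\circ F_E(\lambda)=T(\sigma)\circ\langle 0\circ\zeta\circ q,\lambda\rangle$; applying, in this order, that the additive bundle morphism $(\lambda,\zeta)$ of Definition~\ref{def:differential bundle} preserves the zero section ($0\circ\zeta=\lambda\circ\zeta$), that $(\lambda,0)$ preserves the fibrewise addition ($T(\sigma)\circ\langle\lambda\circ\pi_0,\lambda\circ\pi_1\rangle=\lambda\circ\sigma$), and that $\sigma$ is unital ($\sigma\circ\langle\zeta\circ q,1_E\rangle=1_E$), this composite collapses to $\lambda$. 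Thus $\Eval(\Ind(E))=(E,M,q,\zeta,\sigma,\lambda)$ on the nose.

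Next, on morphisms: if $(f,g)$ is an additive bundle morphism then $\Ind(f,g)$ is, by Theorem~\ref{thm:induce_on_morphisms}, the natural transformation $\varphi$ with $\varphi_{\mathbb N^k}=f_k$ and $\varphi_{\mathbb N^0}=g$, and $\Eval$ (evaluated at the identity of $\mathbb N^1$) reads off the pair $(\varphi_{\mathbb N^1},\varphi_{\mathbb N^0})=(f,g)$ directly from Definition~\ref{def:Eval}. Since $\Eval$ and $\Ind$ are functors, agreement on objects and on morphisms gives $\Eval\circ\Ind=1$, which is part~(a), and part~(b) follows at once: every differential bundle $B$ satisfies $B=\Eval(\Ind(B))$, so $\Eval$ is even strictly surjective on objects.

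For part~(c), given \nice{} functors $F,F'\colon\mathbb N^\bullet\to\mathbb X$ and an additive morphism of differential bundles $(h,k)\colon\Eval(F)\to\Eval(F')$, I would construct a preimage $\varphi\colon F\Rightarrow F'$. Since $F$ and $F'$ preserve pullbacks over the terminal object, we may identify $F(\mathbb N^n)\cong F(\mathbb N^1)_n$ and $F'(\mathbb N^n)\cong F'(\mathbb N^1)_n$; as $(h,k)$ is compatible with the projections, $h$ induces the pullback power $h_n=h\times_k\cdots\times_k h$, and we set $\varphi_{\mathbb N^n}:=h_n$, so that $\varphi_{\mathbb N^1}=h$ and $\varphi_{\mathbb N^0}=k$. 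By Lemma~\ref{lemma:morph_in_N_bullet} naturality of $\varphi$ needs checking only against the generators $\Delta_k\colon\mathbb N\to\mathbb N^k$ and $\sigma_k\colon\mathbb N^k\to\mathbb N$ (with $\Delta_0=q$ and $\sigma_0=\zeta$), since $F,F'$ preserve the projections and pairings used to assemble an arbitrary matrix: naturality at $\Delta_k$ holds because $F,F'$ send $\Delta_k$ to a diagonal and pullback powers commute with diagonals; naturality at $q=\Delta_0$ is the defining square of the differential bundle morphism $(h,k)$; and naturality at $\sigma_k$ (in particular at $\zeta=\sigma_0$) is exactly the additivity conditions~\ref{eqn:additive} on $(h,k)$, propagated inductively along $\sigma_k=\sigma\circ(1\times\sigma_{k-1})$. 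Finally $\Eval(\varphi)=(\varphi_{\mathbb N^1},\varphi_{\mathbb N^0})=(h,k)$, so $\Eval$ is full.

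The main obstacle is the object-level identity in part~(a): identifying $\hat\alpha_{W,\mathbb N^1}\circ F_E(\lambda)$ with $\lambda$ requires chaining the two additive-bundle-morphism axioms for $\lambda$ with the unit law for $\sigma$ in exactly the right order, while keeping track of how the $\mathbb N$-module operations of Lemma~\ref{lem:module_structure_on_hom} interact with the pullbacks $E_2$. Everything else is bookkeeping with the generators of $\mathbb N^\bullet$ and the functoriality already established in Theorems~\ref{thm:Eval_Dobj_DiffFun_DBun} and~\ref{thm:induce_on_morphisms}.
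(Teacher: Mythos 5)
Your proof is correct, and for parts (a) and (b) it follows the paper's route (check $\Eval\circ\Ind$ on objects and morphisms, then read off surjectivity), but with one welcome addition: the paper simply asserts that $\Ind(E)$ ``sends $\lambda_{\mathbb N}$ to $\lambda$,'' whereas the actual lift of $\Eval(\Ind(E))$ is $\hat\alpha_{W,\mathbb N^1}\circ F_E(\lambda_{\mathbb N})$, and your chain $T(\sigma)\circ\langle 0\circ\zeta\circ q,\lambda\rangle = T(\sigma)\circ\langle\lambda\circ\zeta\circ q,\lambda\rangle=\lambda\circ\sigma\circ\langle\zeta\circ q,1_E\rangle=\lambda$ is exactly the verification that this collapses to $\lambda$; that computation is genuinely needed and is elided in the paper. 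For part (c) you diverge more substantially: the paper dismisses fullness with ``follows from (a),'' but $\Eval\circ\Ind=1$ only produces a preimage of $(h,k)$ as a transformation $\Ind\Eval(F)\Rightarrow\Ind\Eval(F')$, not $F\Rightarrow F'$, so the retraction identity alone does not formally give fullness for arbitrary differential functors $F,F'$. Your direct construction — setting $\varphi_{\mathbb N^n}:=h_n$ via the pullback-power identification $F(\mathbb N^n)\cong F(\mathbb N^1)_n$ and checking naturality only against the generators $\Delta_k$, $\sigma_k$ and the projections using Lemma \ref{lemma:morph_in_N_bullet} — closes that gap and is the more robust argument; it buys an honest proof of (c) at the cost of the generator-by-generator naturality check, which mirrors the technique already used in Theorem \ref{thm:induce_on_morphisms}.
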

\begin{proof}
\begin{enumerate}[label = (\alph*)]
\item We will check the equality of functors on objects and morphisms.
Let $(E,M,q,\zeta,\sigma,\lambda)$ be an object, i.e. a differential bundle. Then $\Induce(E)$ is a functor that sends $\mathbb N^0$ to $M$, $\mathbb N^1$ to $E$, $q_\mathbb N$ to $q$, $\zeta_\mathbb N$ to $\zeta$, $\sigma_\mathbb N$ to $\sigma$ and $\lambda_\mathbb N$ to $\lambda$. Thus applying the functor $\Eval$ which is evaluation on $(\mathbb N, \mathbb N^0, q_\mathbb N , \zeta_\mathbb N, \sigma_\mathbb N, \lambda_\mathbb N)$ returns $(E,M,q,\zeta,\sigma, \lambda)$.

For an additive morphism $(f,g)$ of differential bundles, $\Induce(f,g)$ is a natural transformation $\varphi$ such that $\varphi_\mathbb N =f$ and $\varphi_{\mathbb N^0}=g$. Now \Eval{} is the functor that evaluates on the $\mathbb N$ and $\mathbb N^0$ components, thus the result is again $(f,g)$.

\item and (c) follow from (a). 

\end{enumerate}
\end{proof}
\begin{lemma}\label{lemma:faithful}
\Eval{} is faithful.
\end{lemma}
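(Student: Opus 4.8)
The plan is to show that the functor $\Eval: \DObj_\add(\mathbb N^\bullet) \times \DiffFun(\mathbb N^\bullet, \mathbb X) \to \DBun_\add(\mathbb X)$, restricted to $\Eval_{\mathbb N} := \Eval(\mathbb N, -): \DiffFun(\mathbb N^\bullet, \mathbb X) \to \DBun_\add(\mathbb X)$, is injective on morphisms. So suppose $(F, \alpha), (F', \alpha')$ are two \nice{} functors $\mathbb N^\bullet \to \mathbb X$ and $\varphi, \psi: (F,\alpha) \Rightarrow (F', \alpha')$ are two \weil-natural transformations with $\Eval_{\mathbb N}(\varphi) = \Eval_{\mathbb N}(\psi)$. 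Unwinding Definition \ref{def:Eval} on morphisms, $\Eval_{\mathbb N}(\varphi)$ is the differential bundle morphism $(\varphi_{\mathbb N}, \varphi_{\mathbb N^0})$ (taking $f = 1_{\mathbb N}$ in the definition, since here we evaluate at the fixed object $\mathbb N$), and similarly $\Eval_{\mathbb N}(\psi) = (\psi_{\mathbb N}, \psi_{\mathbb N^0})$. Therefore the hypothesis $\Eval_{\mathbb N}(\varphi) = \Eval_{\mathbb N}(\psi)$ is exactly the statement that $\varphi_{\mathbb N^1} = \psi_{\mathbb N^1}$ and $\varphi_{\mathbb N^0} = \psi_{\mathbb N^0}$.

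The second step is to invoke Proposition \ref{prop:nat_trafos_generated_on} directly: a natural transformation between \nice{} functors $\mathbb N^\bullet \to \mathbb X$ is determined by its $\mathbb N^0$ and $\mathbb N^1$ components. Since $\varphi$ and $\psi$ agree on those two components, they agree on all of $\mathbb N^\bullet$, i.e.\ $\varphi = \psi$. This proves faithfulness. The same argument applies verbatim in the linear, strong, and strong-linear variants, since in each case a morphism in the $\DiffFun$-category is still just a (linear) \weil-natural transformation and Proposition \ref{prop:nat_trafos_generated_on} does not care about those extra conditions; I would add one sentence noting this.

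I do not expect a genuine obstacle here, since the real content was already isolated in Proposition \ref{prop:nat_trafos_generated_on}. The only point that requires a moment's care is making sure the bookkeeping in Definition \ref{def:Eval} is read correctly: $\Eval$ is defined on a pair $(f, \varphi)$ and sends it to $(\varphi_{A'} \circ F(f), \varphi_*)$, so when we fix the first argument to be the identity morphism on $\mathbb N$ (which is what "evaluating at $\mathbb N$" and taking $\Eval_{\mathbb N}$ on a morphism of $\DiffFun$ means, by Corollary \ref{cor:eval_difffun_to_dbun_dobj}), the first component collapses to $\varphi_{\mathbb N} \circ F(1_{\mathbb N}) = \varphi_{\mathbb N}$, and the second component is $\varphi_{\mathbb N^0}$ since $* = \mathbb N^0$ in $\mathbb N^\bullet$. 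Once this identification is spelled out, the proof is a two-line appeal to Proposition \ref{prop:nat_trafos_generated_on}.

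\begin{proof}
Fix \nice{} functors $(F,\alpha),(F',\alpha'):\mathbb N^\bullet \to \mathbb X$ and let $\varphi,\psi:(F,\alpha)\Rightarrow(F',\alpha')$ be morphisms in $\DiffFun(\mathbb N^\bullet,\mathbb X)$, i.e.\ \weil-natural transformations, with $\Eval_{\mathbb N}(\varphi)=\Eval_{\mathbb N}(\psi)$. By Corollary \ref{cor:eval_difffun_to_dbun_dobj} and Definition \ref{def:Eval}, $\Eval_{\mathbb N}$ sends a \weil-natural transformation $\varphi$ to the differential bundle morphism obtained by evaluating $\Eval$ at the identity on $\mathbb N$ in the first variable; since $\Eval(\,1_{\mathbb N},\varphi\,)=(\varphi_{\mathbb N}\circ F(1_{\mathbb N}),\varphi_*)=(\varphi_{\mathbb N^1},\varphi_{\mathbb N^0})$, the hypothesis says precisely that
$$
\varphi_{\mathbb N^1}=\psi_{\mathbb N^1}\qquad\text{and}\qquad\varphi_{\mathbb N^0}=\psi_{\mathbb N^0}.
$$
By Proposition \ref{prop:nat_trafos_generated_on}, a natural transformation between \nice{} functors $\mathbb N^\bullet\to\mathbb X$ is determined by its $\mathbb N^0$ and $\mathbb N^1$ components, so $\varphi=\psi$. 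Hence $\Eval_{\mathbb N}$ is faithful. The identical argument, together with Proposition \ref{prop:nat_trafos_generated_on}, shows that the linear, strong and strong-linear variants of $\Eval_{\mathbb N}$ from Corollary \ref{cor:eval_difffun_to_dbun_dobj} are faithful as well, since in each case a morphism of the relevant $\DiffFun$-category is still a (linear) \weil-natural transformation.
\end{proof}
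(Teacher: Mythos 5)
Your proof is correct and follows exactly the same route as the paper: reduce the hypothesis $\Eval(\varphi)=\Eval(\psi)$ to agreement of the $\mathbb N^0$ and $\mathbb N^1$ components, then apply Proposition \ref{prop:nat_trafos_generated_on} to conclude $\varphi=\psi$. The extra bookkeeping you do in unwinding Definition \ref{def:Eval} and the remark about the linear/strong variants are harmless additions to the paper's (terser) argument.
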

\begin{proof}
Let $\varphi$ and $\varphi'$ be natural transformations between \nice{} functors such that $\Eval (\varphi) = \Eval (\varphi')$. This means that $\varphi_\mathbb N = \varphi'_\mathbb N$ and $\varphi_{\mathbb N^0} = \varphi_{\mathbb N^0}$. Now it follows from Proposition \ref{prop:nat_trafos_generated_on} that $\varphi = \varphi'$ which means that \Eval{} is faithful.
\end{proof}
\begin{theorem}\label{thm:equivalence_bundle_categories}
The functors \Eval{} and \Induce{} form equivalences
$$
\DBun_\add( \mathbb X) \cong \DiffFun(\mathbb N^\bullet, \mathbb X) \qquad \DBun_\lin( \mathbb X) \cong \DiffFun_\lin(\mathbb N^\bullet,\mathbb X)
$$
\end{theorem}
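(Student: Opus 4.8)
The plan is to deduce the theorem formally from the results already established. Throughout, write $\Eval$ for the functor $\Eval_{\mathbb N}$ of Corollary \ref{cor:eval_difffun_to_dbun_dobj} obtained by evaluating at the differential object $\mathbb N$ of $\mathbb N^\bullet$; by Corollary \ref{cor:eval_difffun_to_dbun_dobj}(a) it goes $\DiffFun(\mathbb N^\bullet,\mathbb X)\to\DBun_\add(\mathbb X)$, and by Proposition \ref{prop:induce_on_objects} and Theorem \ref{thm:induce_on_morphisms}(a),(c) the functor $\Ind$ goes $\DBun_\add(\mathbb X)\to\DiffFun(\mathbb N^\bullet,\mathbb X)$. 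By Lemma \ref{lemma_composition_e_i}(b),(c) and Lemma \ref{lemma:faithful}, $\Eval$ is essentially surjective, full and faithful, hence an equivalence of categories; it remains only to exhibit $\Ind$ as a quasi-inverse. One of the two comparison isomorphisms is supplied \emph{on the nose} by Lemma \ref{lemma_composition_e_i}(a), namely the equality $\Eval\circ\Ind=1_{\DBun_\add(\mathbb X)}$.

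For the other comparison, $\Ind\circ\Eval\cong 1_{\DiffFun(\mathbb N^\bullet,\mathbb X)}$, I would argue from full faithfulness of $\Eval$. For a lax differential functor $(F,\alpha)\colon\mathbb N^\bullet\to\mathbb X$, Lemma \ref{lemma_composition_e_i}(a) gives $\Eval\big(\Ind(\Eval(F,\alpha))\big)=\Eval(F,\alpha)$, so full faithfulness of $\Eval$ produces a unique morphism $\theta_{(F,\alpha)}\colon\Ind(\Eval(F,\alpha))\to(F,\alpha)$ with $\Eval(\theta_{(F,\alpha)})=1$, and $\theta_{(F,\alpha)}$ is an isomorphism because a fully faithful functor reflects isomorphisms. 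Functoriality of $\Eval$ together with faithfulness forces the family $\theta_{(F,\alpha)}$ to be natural in $(F,\alpha)$. Concretely, $\theta_{(F,\alpha)}$ is the comparison between $F$ and the reconstructed functor $F_{F(\mathbb N)}$ from Definition \ref{def:induce_on_objects_part_1}: on an object $\mathbb N^k$ it is the canonical isomorphism $F(\mathbb N^k)\cong F(\mathbb N)_k$ coming from the fact that a differential functor preserves the pullback powers of $\mathbb N\to\mathbb N^0$ over the terminal object $\mathbb N^0$ (Definition \ref{def:differential_functors}); naturality with respect to the generating morphisms $\sigma_k$ and $\Delta_k$ of $\mathbb N^\bullet$ (Lemma \ref{lemma:morph_in_N_bullet}) holds because $F_{F(\mathbb N)}$ sends these to the corresponding structure maps of $\Eval(F,\alpha)$ by Definition \ref{def:induce_on_objects_part_1} and intertwines the two distributors by the inductive clauses of Definition \ref{def:alpha_hat_for_induce}; and Proposition \ref{prop:nat_trafos_generated_on} guarantees these object-wise comparisons assemble into a single morphism of differential functors. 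Combining the two comparisons, $(\Eval,\Ind)$ is an equivalence $\DBun_\add(\mathbb X)\cong\DiffFun(\mathbb N^\bullet,\mathbb X)$.

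The linear statement follows by running the identical argument on the linear sub-categories. By Theorem \ref{thm:induce_on_morphisms}(b),(c), $\Ind$ restricts to a functor $\DBun_\lin(\mathbb X)\to\DiffFun_\lin(\mathbb N^\bullet,\mathbb X)$, and by Corollary \ref{cor:eval_difffun_to_dbun_dobj}(b) (using Proposition \ref{prop:eval_on_morphisms}(b)) $\Eval$ restricts to $\DiffFun_\lin(\mathbb N^\bullet,\mathbb X)\to\DBun_\lin(\mathbb X)$. Since a linear morphism of differential bundles is in particular additive and a linear natural transformation is in particular a natural transformation, the proofs of Lemmas \ref{lemma_composition_e_i} and \ref{lemma:faithful} apply verbatim to the restricted functors. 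Moreover the comparison isomorphisms $\theta_{(F,\alpha)}$ are themselves linear natural transformations: they become identities after $\Eval$, hence by Proposition \ref{prop:nat_trafos_generated_on} are determined by their $\mathbb N^0$ and $\mathbb N^1$ components, which are precisely the components of the linear comparison. Therefore $\Eval$ and $\Ind$ restrict to mutually quasi-inverse functors $\DBun_\lin(\mathbb X)\cong\DiffFun_\lin(\mathbb N^\bullet,\mathbb X)$.

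I expect the only genuinely non-formal step to be the verification that the reconstructed functor $F_{F(\mathbb N)}=\Ind(\Eval(F))$ recovers $F$ on \emph{all} of $\mathbb N^\bullet$ — that the object-wise isomorphisms $F(\mathbb N^k)\cong F(\mathbb N)_k$ are natural in the morphisms of $\mathbb N^\bullet$ and compatible with the distributors. Everything else is the standard fact that an essentially surjective, full, faithful functor possessing a one-sided inverse is an equivalence with that inverse as quasi-inverse. The decisive inputs for the non-formal step are the pullback-preservation built into Definition \ref{def:differential_functors}, the generator description of $\mathbb N^\bullet$ in Lemma \ref{lemma:morph_in_N_bullet}, and Proposition \ref{prop:nat_trafos_generated_on}.
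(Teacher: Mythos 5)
Your proposal is correct and follows essentially the same route as the paper: both establish that $\Eval$ is essentially surjective, full and faithful via Lemmas \ref{lemma_composition_e_i} and \ref{lemma:faithful}, and then use the strict identity $\Eval\circ\Ind=1$ to conclude formally that $\Ind$ is a quasi-inverse (the paper writes this as $\Ind\circ\Eval\cong\Eval^{-1}\circ\Eval\circ\Ind\circ\Eval\cong 1$, while you lift identities through full faithfulness — the same abstract argument). Your closing worry about a remaining ``non-formal step'' is unfounded: the formal argument already supplies the natural isomorphism $\Ind\circ\Eval\cong 1$, and the paper does no further concrete verification either.
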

\begin{proof}
Lemma \ref{lemma_composition_e_i} and Lemma \ref{lemma:faithful} together show that Eval is essentially surjective, full and faithful. This shows there is an equivalence. 
Let $\Eval^{-1}$ be a pseudo-inverse of $\Eval$ (since \Eval{} is an equivalence, it exists).
Then
$$
\Induce \circ \Eval \cong \Eval^{-1} \circ \Eval \circ \Induce \circ \Eval = \Eval^{-1} \circ \Eval \cong 1_{\DiffFun(\mathbb N^\bullet, \mathbb X)}
$$
It follows that \Eval{} and \Induce{} are pseudo-inverses to each other.
\end{proof}
This finally is the characterization of Differential bundles as functors from $\mathbb N^\bullet$.
\begin{corollary}\label{cor:eval_ind_add_lin}
\Eval{} and \Induce{} also form equivalences
$$
\DObj_\add(\mathbb X) \cong \DiffFun^\strong(\mathbb N^\bullet, \mathbb X) \qquad \DObj_\lin(\mathbb X) \cong \DiffFun^\strong_\lin(\mathbb N^\bullet, \mathbb X)
$$
\end{corollary}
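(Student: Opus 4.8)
The plan is to deduce this corollary from Theorem \ref{thm:equivalence_bundle_categories} by restricting the equivalences $\Eval$ and $\Induce$ to the appropriate full subcategories. Recall that $\DObj_\add(\mathbb X)$ is the full subcategory of $\DBun_\add(\mathbb X)$ on the differential bundles whose base is a terminal object, and $\DiffFun^\strong(\mathbb N^\bullet,\mathbb X)$ is the full subcategory of $\DiffFun(\mathbb N^\bullet,\mathbb X)$ on the strong \nice{} functors (Definitions \ref{def:names_of_categories_involved} and \ref{def:DiffFun}). So it is enough to check that the equivalence $\Eval\colon\DiffFun(\mathbb N^\bullet,\mathbb X)\xrightarrow{\sim}\DBun_\add(\mathbb X)$ of Theorem \ref{thm:equivalence_bundle_categories} restricts to a correspondence between these two full subcategories, with inverse the restriction of $\Induce$.

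First I would note that $\mathbb N^0$ is the terminal object of $\mathbb N^\bullet$, so that for a \nice{} functor $F$ the bundle $\Eval_{\mathbb N}(F)=(F(\mathbb N^1),F(\mathbb N^0),\dots)$ has base $F(\mathbb N^0)$. If $F$ is strong, then $F(\mathbb N^0)$ is terminal in $\mathbb X$, so $\Eval_{\mathbb N}(F)$ is a differential bundle over a terminal object, i.e. a differential object; this is exactly Proposition \ref{prop:Dobj_to_Dbun}(b). Conversely, if $(E,M,q)$ is a differential object then $M$ is terminal, and the induced functor $F_E$ of Definition \ref{def:induce_on_objects_part_1} satisfies $F_E(\mathbb N^0)=M$, so $\Induce(E)=(F_E,\hat\alpha)$ sends the terminal object of $\mathbb N^\bullet$ to a terminal object of $\mathbb X$ and is therefore a strong \nice{} functor by Definition \ref{def:differential_functors}. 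On morphisms the restriction is automatic: $\Eval$ sends a \weil-natural transformation to an additive morphism (Proposition \ref{prop:eval_on_morphisms}(a)), which between differential objects is a morphism of $\DObj_\add(\mathbb X)$, and $\Induce$ sends an additive morphism of differential objects to a \weil-natural transformation (Theorem \ref{thm:induce_on_morphisms}(a)).

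Having established that $\Eval$ and $\Induce$ restrict to functors between $\DiffFun^\strong(\mathbb N^\bullet,\mathbb X)$ and $\DObj_\add(\mathbb X)$, the equivalence is inherited: these are full subcategories, so the restricted functors remain fully faithful, and the identity $\Eval\circ\Induce=1$ of Lemma \ref{lemma_composition_e_i}(a) together with the natural isomorphism $\Induce\circ\Eval\cong1$ obtained in the proof of Theorem \ref{thm:equivalence_bundle_categories} restrict verbatim. For the linear case I would run the same argument with $\DiffFun^\strong_\lin$ and $\DObj_\lin$, now starting from the equivalence $\DBun_\lin(\mathbb X)\cong\DiffFun_\lin(\mathbb N^\bullet,\mathbb X)$ of Theorem \ref{thm:equivalence_bundle_categories} and invoking Proposition \ref{prop:eval_on_morphisms}(b) and Theorem \ref{thm:induce_on_morphisms}(b) to see that linearity of natural transformations is preserved in both directions. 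I do not expect a genuine obstacle here; the only point that needs care is the matching of the two ``terminal object'' conditions — strongness of $F$, meaning preservation of $*_{\mathbb N^\bullet}=\mathbb N^0$, versus the base $F(\mathbb N^0)$ of $\Eval_{\mathbb N}(F)$ being terminal — and this is immediate from $F(\mathbb N^0)=F(*_{\mathbb N^\bullet})$ and $F_E(\mathbb N^0)=M$.
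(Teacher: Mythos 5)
Your proposal is correct and follows essentially the same route as the paper: the paper's proof is a one-line observation that $\DiffFun^\strong(\mathbb N^\bullet,\mathbb X)$ is the full subcategory of terminal-object-preserving functors and hence corresponds under the equivalence of Theorem \ref{thm:equivalence_bundle_categories} to the full subcategory of differential bundles over the terminal object. Your write-up simply makes explicit the verifications (via Proposition \ref{prop:Dobj_to_Dbun}(b) and $F_E(\mathbb N^0)=M$) that the paper leaves implicit.
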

\begin{proof}
As $\DiffFun^\strong(\mathbb N^\bullet , \mathbb X)$ is the full subcategory of functors that preserve the terminal object, it is equivalent to the full subcategory of differential bundles over the terminal object, i.e. differential objects.
\end{proof}

This Corollary generalizes Proposition 5.9 from \cite{BauerBurkeChing}, which states that the category of differential objects with linear morphisms is equivalent to the category of \nice{} functors and linear natural transformations.  Corollary \ref{cor:eval_ind_add_lin} generalizes the correspondence to the case of additive morphisms as well, and Theorem \ref{thm:equivalence_bundle_categories} generalizes the correspondence to differential bundles.

\section{Concluding remarks}\label{sec:concluding_remarks}

Differential bundles appear in the tangent category literature in a number of places.  In some cases, the approach taken by other authors is different than the approach taken here.  We review these contributions, highlighting the intuition about how these different approaches reference the same structure.

\subsection{Relation to Cockett-Cruttwell differential bundles} In \cite{cockett2016diffbundles}, Cockett and Cruttwell characterize differential bundles as fibrations. Concretely, \cite[Proposition 5.12]{cockett2016diffbundles} states that a differential bundle $E \to M$ can be seen as a differential object in the fiber $\mathrm{bun}_D(\mathbb X)[M]$ over $M$ of the fibration $\mathrm{bun}_D(\mathbb X) \to \mathbb X$ where $\mathrm{bun}_D(\mathbb X)$ is a full subcategory of the arrow category. For the definition of this subcategory a transerse system and a display system are chosen, which is necessary to control the existence of certain pullbacks which are preserved by the tangent structure.  These include the tangent pullbacks, but are more general. This approach was further developed in \cite{lanfranchi2025tangentdisplaymaps}.

From this perspective, a differential bundle is a differential object in (i.e. a strong \nice{} functor from $\mathbb N ^\bullet$ into) a certain full subcategory of the arrow category. The intricacies about the pullbacks are specified in the display system in the target of this functor.

The difference between the approach of \cite{cockett2016diffbundles, lanfranchi2025tangentdisplaymaps} and our current approach is that the equivalence of Theorem \ref{thm:equivalence_bundle_categories} uses general, not strong, \nice{} functors that do not preserve the terminal object.

The existence of any necessary pullbacks comes from their existence in \WEIL, as well as the fact that Cartesian lineators have certain pullbacks associated to them
Preservation of pullbacks are part of the data of \nice{} functors, not part of the target category. This is a marked difference in perspective from that of \cite{cockett2016diffbundles, lanfranchi2025tangentdisplaymaps}.

For future work we propose to investigate the relation of our characterization with $\nice{}$ functors to display systems. In particular, one could ask, which \nice{} functors into a tangent category with a display system correspond to display differential bundles.
The key intricacy is that not every morphism in the image of $F_E: \mathbb N^\bullet \to \mathbb X$ needs to lie in the display system, but certain morphisms need to.

\subsection{Relation to Ching classification}

There is another approach to differential bundles in tangent categories in \cite{ching2024characterizationdifferentialbundlestangent}.
In \cite[Corollary 9]{ching2024characterizationdifferentialbundlestangent}, Ching defines differential bundles as triples of morphisms which correspond to $(q: E \to M,\zeta: M \to E, \lambda: E \to T(E))$ from Definition \ref{def:differential bundle}.  These must fulfill certain properties.  The additive structure of Ching's differential bundles comes from the natural transformation $\sigma$ in the tangent structure.  Ching proves that this definition of differential bundles is equivalent to the one in \cite{cockett2016diffbundles}, therefore ours is also equivalent to Ching's.  

The approach in \cite {ching2024characterizationdifferentialbundlestangent} is very axiomatic in nature, and does not use the structure of $\mathbb N^\bullet$ specifically. However, 
  the differential structure in $\mathbb N^\bullet$ is already given by the tangent structure together with product projections, as in Lemma \ref{lemma:morph_in_N_bullet}. Ching's approach to differential bundles is to  generalizes this property from $\mathbb N^\bullet$ to any arbitrary differential bundle axiomatically in any tangent category, rather than transporting this structure from $\mathbb N^\bullet$ using \nice{} functors.

\subsection{Relation to tangent infinity categories}

In current work in progress Arro and Ching further generalize differential bundles to tangent infinity categories. Tangent infinity categories were defined in \cite{BauerBurkeChing} generalizing the classification of tangent categories by Leung and Garner. The definition of differential bundles in tangent infinity categories is a generalization of Theorem \ref{thm:equivalence_bundle_categories}.   This result characterizes differential bundles as functors from a structure category. The structure category $\mathbb N^\bullet$ is generalized with a certain structure category $E_\infty$ of spans of finite sets. One advantage  our \WEIL-actegory approach over the axiomatic tangent category approaches of \cite{cockett2016diffbundles, lanfranchi2025tangentdisplaymaps, ching2024characterizationdifferentialbundlestangent} is that our use of \WEIL-actegories compares nicely to the way tangent infinity categories are constructed. We could have characterized differential bundles using \nice{} functors in the axiomatic tangent category setting rather than the \WEIL-actegory setting, but chose to use the Garner-Leung setting for tangent actegories exactly for this reason.

\subsection{Future work in connections}

In \cite{cockett2017connections} differential bundles were further used to describe connections, a key feature of geometry and mathematical physics.  This led to one of the approaches to describe dynamical systems in \cite{cockett2019diffeq} which rely on differential bundles in other ways. 

Classically, there are two different notions of connections which, under certain circumstances, are equivalent. A Koszul connection (also known as covariant derivative) is a differential operator sending a section of a vector bundle and tangent vector to the directional derivative of the section along the tangent vector. An Ehresmann connection is a horizontal sub-bundle of a fiber bundle. In general, in the definition of an Ehresmann connection, the fibre bundle is not required to be a vector bundle. In the special case that it is a vector bundle, there is an equivalence between Koszul connections and Ehresmann connections.

Both Koszul and Ehresmann connections on vector bundles were generalized in \cite{cockett2017connections} as vertical and horizontal connections on differential bundles which are the categorical generalization of vector bundles. Under certain circumstances they are equivalent, in general they are not though. We would like to investigate connections via the characterization of differential bundles in this paper, and hope that this might advance this line of inquiry by a functorial characterization of bundles with connections. This will allow us to understand the behavior of connections under many categorical constructions, like opposites and Kleisli categories.

\subsection{Future work in dynamical systems}
Another construction this characterization of differential bundles can be helpful with, are dynamical systems.

Classically, dynamical systems are defined in \cite{jazmeyers_dynamical} to consists of a notion of how things can be, called the states, and a notion of how things will change given how they are, called the dynamics. Often the dynamics is described by a set of differential equations.

In \cite{cockett2019diffeq}, Cockett, Cruttwell and Lemay define a dynamical system in a tangent category, generalizing systems of differential equations.

While, in general the solution of a dynamical system in \cite{cockett2019diffeq} only requires a curve object, they obtain more results (like the existence of an exponential function) when assuming a differential curve object. 

We would like to investigate differential curve objects and dynamical systems in general via the characterization of differential bundles in this paper. This would allow us to describe the behavior of dynamical systems under constructions like opposites and Kleisli categories. In particular we hope this will be useful to include categorical differentiation into the categorically combinatorial descriptions of large dynamical systems as in \cite{jazmeyers_dynamical} and thereby simplify the description of real world systems.

\appendix
\section{Naturality diagrams for \Induce(E)}\label{Appendix:naturality}
In this appendix we will check that for the functor $\Induce(E): \mathbb N^\bullet$ to $\mathbb X$ defined  in Proposition \ref{prop:induce_on_objects} with components $(F_E,\hat \alpha)$, the $\hat \alpha: F_E \circ \T_\mathbb {N^\bullet} \to \Te \circ (1  \times F_E)$ is natural. The functors $F_R \circ \T_\mathbb {N^\bullet}$ and  $\Te \circ (1 \times F_E)$ both go from $\weil \times \mathbb N^\bullet$ to $\mathbb X$.

The following Lemma is a summary of some of Leung's observations about generators of $\weil$ in \cite{Leung2017}.
\begin{lemma}\label{lemma:Leung_on_weil-morphs}\cite[Proposition 9.5]{Leung2017}
In the category $\Weil$ every map is inductively generated by $p$, $0$, $+$, $\ell$, $c$ and the projections $\pi_i$ using compositions, coproducts and foundational pullbacks which are pullbacks of the form 
\[\begin{tikzcd}
	{A \otimes (B \times C)} & {A \otimes B} \\
	{A \otimes C} & {A.}
	\arrow[from=1-1, to=1-2]
	\arrow[from=1-1, to=2-1]
	\arrow["\lrcorner"{anchor=center, pos=0.125}, draw=none, from=1-1, to=2-2]
	\arrow[from=1-2, to=2-2]
	\arrow[from=2-1, to=2-2]
\end{tikzcd}\]
\end{lemma}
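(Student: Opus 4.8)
The plan is to argue by induction on the structure of $\weil$, using the classification of its objects from Remark \ref{rem:characterization_of_weil_objects}. Every object is a coproduct $W^{n_1}\otimes\cdots\otimes W^{n_k}$ of product powers $W^{n}=W\times\cdots\times W$ of the dual numbers $W$. Each such product is itself a foundational pullback over the monoidal unit $\mathbb N$: taking $A=\mathbb N$ in the displayed square turns $\mathbb N\otimes(B\times C)\cong B\times C$ into the product of $B$ and $C$ realized as a pullback over $\mathbb N$. Hence every object of $\weil$ is built from the single generating object $W$ using only the two operations $\otimes$ and $\times$ that the statement permits, and the object-level induction is anchored at $W$.

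For morphisms I would reduce an arbitrary $f\colon A\to B$ to atomic pieces using the universal properties of the two operations. Since $\otimes$ is the coproduct in $\weil$, a morphism out of $A\otimes A'$ is the copairing of its two restrictions $g\colon A\to B$ and $h\colon A'\to B$, realized as $\nabla_B\circ(g\otimes h)$ where $\nabla_B\colon B\otimes B\to B$ is the codiagonal; thus the source-side induction reduces $f$ to morphisms out of a single product power $W^{n}$. Dually, a morphism into a pullback is determined by its components after the projections $\pi_i$, which lets me split $f$ according to the product factors of the target. After these two reductions the problem becomes: express every morphism whose source and target are powers of $W$ and whose value on each algebra generator $x_i$ is a prescribed square-zero element of the target, as a composite of $p$, $0$, $+$, $\ell$, $c$ and the $\pi_i$.

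The heart of the argument, and the main obstacle, is this last step. A morphism out of $W$ is determined by the image of its generator $x$, which may be any nilpotent element, i.e. an $\mathbb N$-combination of degree-one monomials $y_i$ and degree-two monomials $y_iy_j$ subject to the square-zero relations. I would handle it by induction on polynomial degree: $\ell$ creates a degree-two monomial $x_1x_2$ out of $x$ and $c$ permutes the two tensor factors, so together with the coproduct inclusions they produce every degree-two monomial; peeling these off reduces the image to its degree-one part, which is governed by $+$, $0$ and $p$ together with the $\mathbb N$-scalar multiplications realized as iterated $+$ precomposed with the diagonals $\langle 1,\dots,1\rangle$ coming from the product's universal property. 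The technical difficulty is purely the bookkeeping: one must verify that the codiagonal $\nabla_B$ and each scalar multiplication genuinely lie in the generated subcategory, and that the relations defining the domain algebra (which fix exactly which products $x_ix_j$ vanish) are respected throughout, so that the assembled composite equals $f$ on the nose. This is precisely the content of Leung's Proposition 9.5, whose inductive scheme I would follow, merely retranslating it into the generators $p,0,+,\ell,c,\pi_i$ named in the statement.
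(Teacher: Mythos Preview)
The paper does not actually prove this lemma: it is stated with the citation \cite[Proposition 9.5]{Leung2017} and no argument is given. It functions purely as a quoted fact from Leung's work, so there is no ``paper's own proof'' to compare your attempt against.

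Your sketch is a plausible outline of how such a proof would go (split on the coproduct in the source, on the foundational pullback in the target, then analyse the image of a single generator by degree using $\ell,c,+,0,p$), and you yourself acknowledge that the final bookkeeping is exactly Leung's Proposition~9.5. That is fine as an orientation, but be aware that the reduction via copairing needs the codiagonal $\nabla_B\colon B\otimes B\to B$ to already lie in the generated subcategory, which is a nontrivial inductive hypothesis you have not discharged; likewise the ``scalar multiplications'' you invoke require diagonals into products, which again must be shown to be generated. These are precisely the kinds of details Leung handles, so your deferral to his argument is appropriate---just note that in the context of this paper the lemma is a black-box citation, not something reproved.
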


Recall from Definition \ref{def:alpha_hat_for_induce} that the transformation $\hat \alpha: F_E\circ T_{\mathbb N^\bullet} \to T \circ F_E$ is defined inductively from the $W,\mathbb N^1$ component:
$$
\hat \alpha_{W  , \mathbb N^1} := T(\sigma) \circ \langle 0 \circ \pi_0 , \lambda \circ \pi_1 \rangle : E_2 \to TE  \qquad \hat \alpha_{W, \mathbb N^k} := (\alpha_\mathbb N)_k: E_{2k} \to T(E)_k.
$$
This is the map in the diagram for the universality of the vertical lift in Definition \ref{def:differential bundle}. 
In particular for the zero-th power $\mathbb N^0$ this definition means $\hat \alpha_{W,\mathbb N^0} = 0_M : M \to T(M)$.
For a product $W^n$ of $W$s there, we define
$$
\hat \alpha_{W^n, \mathbb N^1} := (\hat \alpha_{W,\mathbb N^1})_n : (E_2)_n \to T_n(E) \qquad \hat \alpha_{W^n, \mathbb N^k} := (\hat \alpha_{W^n, \mathbb N^1})_k: (E_{2n})_k \to (T_n(E))_k
$$
In particular for the empty product $\hat \alpha_{\mathbb N, \mathbb N^k} := 1_{E_k}$. Now, for coproducts $A \otimes B \in \mathrm{Obj}(\weil)$, we define $\hat \alpha_{A \otimes B, -}$ through the commuting diagrams in definition \ref{def:lax_tangent_functor}, concretely through
$$
\hat \alpha_{A \otimes B , X} := \T_A \hat  \alpha_{B,X} \circ \alpha_{A  , \T_B X} : F \circ \T_A \circ \T_B (X) \to  \T_A \circ \T_B \circ F (X)
$$
and 
$$
\alpha_{\mathbb N, X} = 1_X : T^0(X) = X \to X = T^0(X).
$$
\begin{proposition}
This transformation $\alpha: F_E \circ \T_\mathbb {N^\bullet} \to \Te \circ (1  \times F_E)$ is a natural transformation between functors $\weil \times \mathbb N^\bullet \to \mathbb X$
\end{proposition}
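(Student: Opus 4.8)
The plan is to verify naturality of $\hat\alpha$ by reducing to the generating morphisms on both the $\weil$ side and the $\mathbb N^\bullet$ side, exploiting the inductive structure of Definition \ref{def:alpha_hat_for_induce}. Since $\hat\alpha$ is a transformation between functors out of a product category $\weil \times \mathbb N^\bullet$, it suffices to check naturality separately in each variable (with the other held fixed) and then combine. In the $\mathbb N^\bullet$-variable, Lemma \ref{lemma:morph_in_N_bullet} tells us that every morphism is built from $\Delta_k \colon \mathbb N \to \mathbb N^k$ and $\sigma_k \colon \mathbb N^k \to \mathbb N$ via composition and the universal property of pullbacks; since $F_E$ preserves those pullbacks (it is constructed to) and $\T_A$ preserves them (as $E$ is a differential bundle and these are powers of $q$), naturality with respect to $\Delta_k$ and $\sigma_k$ propagates to all morphisms. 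In the $\weil$-variable, Lemma \ref{lemma:Leung_on_weil-morphs} (Leung's generation result) reduces the check to the morphisms $p$, $0$, $+$, $\ell$, $c$, the projections $\pi_i$, coproducts and the foundational pullbacks.

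First I would handle the $\mathbb N^\bullet$-variable with $A = W$ fixed. Here $\hat\alpha_{W,\mathbb N^1} = T(\sigma)\circ\langle 0\circ\pi_0,\lambda\circ\pi_1\rangle$ is exactly the map $\mu$ from the universality of the vertical lift in Definition \ref{def:differential bundle}. Naturality with respect to $\Delta_k$ and $\sigma_k$ amounts to two commuting squares; the square for $\sigma_k$ unwinds into the additive bundle morphism axioms for $(\lambda,\zeta)$ and $(\lambda,0)$ from Definition \ref{def:differential bundle}, applied componentwise, while the square for $\Delta_k$ is a componentwise copy of the $k=1$ statement together with the zero-section behaviour $\hat\alpha_{W,\mathbb N^0}=0_M$ and naturality of $0$. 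Then naturality for $W^n$ follows by taking $n$-fold pullback powers (since $\hat\alpha_{W^n,-}$ is by definition a pullback power of $\hat\alpha_{W,-}$), and naturality for general $\mathbb N^k$ likewise by pullback powers over $M$.

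Next I would handle the $\weil$-variable with $\mathbb N^k$ fixed. For the generators $p, 0, +, \ell, c$ one must show that $\hat\alpha$ intertwines the action of these $\weil$-morphisms on the $F_E$-side with their action on the $\T$-side; since $\hat\alpha_{W,\mathbb N^1}$ is built precisely out of $\sigma$, $\zeta$ (as $0$) and $\lambda$, and these are the images under $F_E$ of the differential-object structure of $\mathbb N$ in $\mathbb N^\bullet$, each such square reduces to one of the compatibility diagrams for a lax tangent functor in Definition \ref{def:lax_tangent_functor} combined with the vertical-lift equation $\ell_E\circ\lambda = T(\lambda)\circ\lambda$ and the additive bundle morphism axioms. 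For the projections $\pi_i$ out of a product $V\times V'$, naturality is immediate because $\hat\alpha_{V\times V',-}$ is defined so that its components along the projections are $\hat\alpha_{V,-}$ and $\hat\alpha_{V',-}$ (using that $F_E$ and the $\T$'s preserve foundational pullbacks). For a tensor (coproduct) $A\otimes B$, naturality in each factor follows from the inductive formula $\hat\alpha_{A\otimes B,X} = \T_A(\hat\alpha_{B,X})\circ\hat\alpha_{A,\T_B X}$ together with the inductive hypotheses for $A$ and $B$ and functoriality of $\T_A$; this is a routine pasting of two naturality squares.

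The main obstacle I expect is the bookkeeping in the tensor case: one has to be careful that the two ways of decomposing a Weil algebra $V = W^{n_1}\otimes\cdots\otimes W^{n_k}$ into generators give the same $\hat\alpha_V$, which ultimately relies on the coherence (associativity and unit) laws for the actegory structure in Definition \ref{def:actegories} and the lineator coherence of Definition \ref{def:linear_functors} — in effect one is checking that $\hat\alpha$ is well-defined, not just natural. The cleanest way around this is the strategy already used in Lemma \ref{lem:lineators_are_unique}: show that any two natural-transformation candidates that agree on the $W,\mathbb N^1$ component must agree everywhere, so it is enough to pin down $\hat\alpha$ on $W$ and $\mathbb N^1$ and verify the lineator and naturality conditions there, with the rest forced. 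The remaining verifications are then the same diagram-chases that appear in the proof of Proposition \ref{prop:Dobj_to_Dbun}, just carried out for the specific structure maps $\sigma$, $\zeta$, $\lambda$ of the bundle $E$, and I would relegate the full componentwise chase to this appendix as the paper does.
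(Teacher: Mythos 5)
Your plan follows the paper's proof essentially verbatim: reduce naturality in the $\mathbb N^\bullet$-variable to the generators $\sigma_k$ and $\Delta_k$ (equivalently $\sigma$, $\zeta$, $\pi_i$, $!$) via Lemma \ref{lemma:morph_in_N_bullet}, reduce naturality in the $\weil$-variable to $p,0,+,\ell,c$, projections, tensors and foundational pullbacks via Lemma \ref{lemma:Leung_on_weil-morphs}, and discharge each generator square by a chase through the additive bundle morphism axioms, naturality of the tangent structure maps, and the vertical-lift equation, pasting squares for tensor products and invoking pullback preservation for the rest. This is exactly what the appendix does; your extra remark that well-definedness of $\hat\alpha$ across different decompositions of a Weil algebra can be secured via Lemma \ref{lem:lineators_are_unique} is a sensible supplement that the paper leaves implicit.
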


Recall from Theorem \ref{thm:Leung} that the morphisms of \WEIL{} produce the tangent structures in $\mathbb X$ via the functor $T_\bullet$ and the equality $(\mathbb X , T, p, 0, +, \ell, c) = (\mathbb X, T_W, T_p, T_0, T_+, T_\ell, T_c)$. Thus the symbols $(p, 0, +, \ell, c)$ are used both for the morphisms in $\weil$ and the morphisms in $\mathbb X$ that come from the tangent structure.

Analogously $F_E$ sends the morphisms $(\zeta: \mathbb N^0 \to \mathbb N^1, \sigma: \mathbb N^2 \to \mathbb N^1)$ of $\mathbb N^\bullet$ to the additive bundle structure $(\zeta: M \to E, \sigma : E_2 \to E)$ underlying the differential bundle $(E,M,q, \zeta, \sigma , \lambda)$. Thus the symbols $\sigma$ and $\zeta$ are used both for the maps in $\mathbb N^\bullet$ and their image in $\mathbb X$.

\begin{proof}
We need to show that it is natural in $\mathbb N^\bullet$ and \weil. We will accomplish this by proving that it is natural in the morphisms generating $\mathbb N^\bullet$ and \weil.

First we show it is natural in $\mathbb N^\bullet$.

As $\alpha_{A,-}$ is built as a composition of pullbacks of $\alpha_{W,-}$ it suffices to check naturality of $\alpha_{W,-}$ in $\mathbb N^\bullet$.
Let $f: \mathbb N^k \to \mathbb N^n$ be a morphism in $\mathbb N^\bullet$. As shown in Lemma \ref{lemma:morph_in_N_bullet}, its components are given by 
$$
f_n = \sigma_{f_n^1 + ... + f_n^k} \circ (\Delta_{f_n^1} \times_M ... \times_M \Delta_{f_n^k}).
$$
Unpacking the definition of the diagonal as the induced map by the identity this can be rewritten into
$$
f_n = \sigma_{N_n} \circ \langle \pi_1 , ... , \pi_2 , ... , ..., \pi_k , ... \rangle
$$
where $N_n= f_n^1 + ... + f_n^k$ and the projection $\pi_i$ appears $f_n^i$ times.
For the zero-fold case, the zero-fold sum is the zero map $\sigma_0 = \zeta: \mathbb N^0 \to \mathbb N^1$ and the zero-fold diagonal is the unique map to the terminal object $\Delta_0 =!: \mathbb N^1 \to \mathbb N^0$ .

The map $f$ is given by a compositions of pullbacks of the addition and the projection over the terminal object $\mathbb N^\bullet$ and the functors $F$ and $\Te$ preserve these pullbacks. Thus we only need to show that $\alpha$ is natural with respect to the addition $\sigma: \mathbb N^2 \to \mathbb N^1$, the zero $\zeta: \mathbb N^0 \to \mathbb N^1$ (zero-fold addition), the pullback-projections $\pi_i: \mathbb N^k \to \mathbb N^1$ and the unique map $!: \mathbb N^1 \to \mathbb N^0$ into the terminal object (zero-fold diagonal).

Naturality for the addition $\sigma: \mathbb N^2 \to \mathbb N$ is given by the following square:
\[\begin{tikzcd}
	{F_E \circ D_W (\mathbb N^2)} && {F_E \circ D_W(\mathbb N)} \\
	{\T_W(F_E(\mathbb N^2))} && {\T_W(F_E(\mathbb N))}
	\arrow["{F_E \circ D_W(\sigma)}", from=1-1, to=1-3]
	\arrow["{\hat \alpha_{W,\mathbb N^2}}"', from=1-1, to=2-1]
	\arrow["{\hat \alpha_{W,\mathbb N}}", from=1-3, to=2-3]
	\arrow["{\T_W(F_E(\sigma))}"', from=2-1, to=2-3]
\end{tikzcd}\]
The composition along the left path through the diagram amounts to
$$
T(\sigma) \circ (T(\sigma) \circ (0 \times_{TM} \lambda) \times_{TM} T(\sigma) \circ (0 \times_{TM} \lambda ) )
$$
As $F \circ T_W(\sigma)$ is given by the pullback $\sigma \times_E \sigma = \langle \sigma \circ \langle \pi_0, \pi_2 \rangle , \sigma \circ \langle \pi_1 , \pi_2 \rangle \rangle$, the right path through the square amounts to
$$
T(\sigma ) \circ (0_E \times_{TM} \lambda) \circ \langle \sigma \circ \langle \pi_0 , \pi_2 \rangle , \sigma \circ \langle \pi_1 , \pi_3 \rangle \rangle.
$$
Using the naturality of $0$ and that $(\lambda , 0)$ is an additive bundle morphism this equals to
$$
\qquad =T(\sigma) \circ \langle T(\sigma ) \circ \langle 0 \circ \pi_0 , 0 \circ \pi_2 \rangle , T(\sigma) \circ \langle \lambda \circ \pi_1 , \lambda \circ \pi_3 \rangle \rangle .
$$
By commutativity and associativity of $\sigma$ this equals to the left path.

Naturality for the zero map $\zeta: \mathbb N^0 \to \mathbb N^1$ is given by the following square:
\[\begin{tikzcd}
	{F_E \circ D_{\bullet}(W,\mathbb N^0)} && {F_E \circ  D_\bullet(W, \mathbb N^1)} \\
	{T_\bullet(W,F_E(\mathbb N^0))} && {T_\bullet(W,F_E(\mathbb N^1))}
	\arrow["{F_E \circ \hat D_\bullet(1_W, \zeta)}", from=1-1, to=1-3]
	\arrow["{\hat \alpha_{W,\mathbb N^0}}"', from=1-1, to=2-1]
	\arrow["{\hat \alpha_{W,\mathbb N^1}}", from=1-3, to=2-3]
	\arrow["{\hat T(1_W,F_E(\zeta))}"', from=2-1, to=2-3]
\end{tikzcd}\]
Since $\alpha_{W, \mathbb N^0} = 0: M \to T(M)$, the composition along the left path through the diagram amounts to 
$
T(\zeta) \circ 0
$.
The composition along the right path is 
$$
T(\sigma) \circ (0 \times_{T(E)} \lambda) \circ (\zeta \times_E \zeta).
$$  
Since, $0 \circ \zeta = \lambda \circ \zeta$, this equals
$$
T(\sigma) \circ (0 \circ \zeta \times_{T(E)} 0 \circ \zeta)
$$
and since
$0: 1_\mathbb X \to T$ is a natural transformation, this equals
$$
T(\sigma) \circ (T(\zeta) \circ 0 \times_{T(E)} T(\zeta) \circ 0)
$$
which equals $T(\zeta) \circ 0$ because $\zeta$ is the zero for the addition $\sigma$.

Naturality for the projections $\pi_i: \mathbb N^k \to \mathbb N$ is given by the following square:
\[\begin{tikzcd}
	{F_E \circ D_W(\mathbb N^k)} && {F_E \circ D_W(\mathbb N)} \\
	{\T_W(F(\mathbb N^k))} && {\T_W(F(\mathbb N))}
	\arrow["{F_E\circ D_W( \pi_i)}", from=1-1, to=1-3]
	\arrow["{\hat \alpha_{W,\mathbb N^k}}"', from=1-1, to=2-1]
	\arrow["{\hat \alpha_{W,\mathbb N}}", from=1-3, to=2-3]
	\arrow["{\T_W(F(\pi_i))}"', from=2-1, to=2-3]
\end{tikzcd}\]
The composition along the left path is 
$
\pi_i \circ (T(\sigma) \circ (0 \times_{TM} \lambda))_k,
$
the composition along the right path is
$
 T(\sigma) \circ (0 \times \lambda) \circ \langle \pi_{2i} , \pi_{2i+1} \rangle
$
and they are equal as projecting to the sum of the $i$-th pair is the same as taking the sum of the pair in indices $2i$ and $2i+1$.

Naturality for the unique morphism to the terminal object, $!: \mathbb N^1 \to \mathbb N^0$ is given by the following square:
\[\begin{tikzcd}
	{F_E \circ D_{\bullet}(W,\mathbb N^1)} && {F_E \circ  D_\bullet(W, \mathbb N^0)} \\
	{T_\bullet(W,F_E(\mathbb N^1))} && {T_\bullet(W,F_E(\mathbb N^0))}
	\arrow["{F_E \circ \hat D_\bullet(1_W, !)}", from=1-1, to=1-3]
	\arrow["{\hat \alpha_{W,\mathbb N^1}}"', from=1-1, to=2-1]
	\arrow["{\hat \alpha_{W,\mathbb N^0}}", from=1-3, to=2-3]
	\arrow["{\hat T(1_W,F_E(!))}"', from=2-1, to=2-3]
\end{tikzcd}\]
The composition along the left path in the diagram gives
$$
T(q) \circ T(\sigma) \circ (0 \times_{T(E)} \lambda) 
$$
which equals $q \circ \pi_0 = q \circ \pi_1$ by the commutativity of the diagram for the universality of the vertical lift.
The composition along the right path in the diagram is
$$
T(\zeta) \circ 0 \circ \langle q , q \rangle
$$
where $\langle q , q \rangle =  q \circ \pi_0 = q \circ \pi_1$ is the morphism $E \times_M E \to M \times_M M = M$.

Since every morphism in $\mathbb N^\bullet$ is generated by $\sigma, \zeta, \pi_i$ and $!$, this concludes the proof that $\hat \alpha$ is natural in $\mathbb N^\bullet$.

Now we show $\hat \alpha$ is natural in \weil.
Since $\hat \alpha_{- , \mathbb N^k }$ is a pullback power of $\hat \alpha_{-,\mathbb N^1}$ we only need to check the naturality of $\hat \alpha_{-,\mathbb N^1}$.

By Lemma \ref{lemma:Leung_on_weil-morphs} (Proposition 9.5 in \cite{Leung2017}) every morphism in \weil{} can be constructed from $0,p, \ell , + , c$ using composition, tensors and foundational pullbacks.

We first prove that $\hat \alpha$ is natural with respect to the generating morphisms $0,p, \ell , + , c$ and then prove that the naturality diagrams are preserved under tensors and foundational pullbacks.

Naturality with respect to $0: \mathbb N \to \mathbb W$ amounts to the following square:
\[\begin{tikzcd}
	{F_E \circ D_\bullet(\mathbb N,\mathbb N)} && {F \circ D_\bullet(W, \mathbb N)} \\
	{\T_\bullet(\mathbb N,F_E(\mathbb N))} && {\T_\bullet(W,F(\mathbb N))}
	\arrow["{F_E\circ D_\bullet(0, 1_\mathbb N)}", from=1-1, to=1-3]
	\arrow["{\hat \alpha_{\mathbb N,\mathbb N}}"', from=1-1, to=2-1]
	\arrow["{\hat \alpha_{W,\mathbb N}}", from=1-3, to=2-3]
	\arrow["{\Te(0,1_{F_E(\mathbb N)})}"', from=2-1, to=2-3]
\end{tikzcd}\]
Since $\hat \alpha_{\mathbb N, \mathbb N}$ is the identity map (the empty pullback), the left path is just the zero-map $0: E \to T(E)$
The composition along the right path of the diagram is 
$$
T(\sigma)\circ (0 \times_{T(M)} \lambda)\circ \langle 1, \zeta \circ q \rangle.
$$
Since $(\lambda,0)$ is an additive bundle morphism from $(E,M)$ to $(T(E), T(M))$, this equals to
$$
T( \sigma ) \circ \langle 0 , T(\zeta) \circ 0 \circ q \rangle = 0
$$
which is the zero-map because $\zeta$ is the neutral element with respect to the addition $\sigma$.

Naturality with respect to $p:W \to \mathbb N$ amounts to the following square:
\[\begin{tikzcd}
	{F_E \circ D_\bullet(W,\mathbb N)} && {F_E \circ D_\bullet(\mathbb N, \mathbb N)} \\
	{\T_{\mathbb X}(W,F_E(\mathbb N))} && {\T_{\mathbb X}(\mathbb N,F_E(\mathbb N))}
	\arrow["{F_E\circ D_\bullet(p, 1_\mathbb N)}", from=1-1, to=1-3]
	\arrow["{\hat \alpha_{W,\mathbb N}}"', from=1-1, to=2-1]
	\arrow["{\hat \alpha_{\mathbb N,\mathbb N}=1_{F(\mathbb N)}}", from=1-3, to=2-3]
	\arrow["{\Te(p,1_{F_E(\mathbb N)})}"', from=2-1, to=2-3]
\end{tikzcd}\]
It commutes since the composition of the left path is $p \circ T(\sigma) \circ \circ (0 \times \lambda)$ which by naturality of $p$ euqals to $\sigma \circ (p \circ 0 \times p \circ \lambda)$. Since $(\lambda, \zeta)$ is an additive bundle morphism this equals to $\sigma \circ (1 \times \zeta \circ q)$. This equals $\pi_0$ because $\zeta$ is zero with respect to the addition $\sigma$.

Naturality with respect to $+:W^2 \to W$ amounts to the following square:
\[\begin{tikzcd}
	{F_E \circ D_\bullet(W^2,\mathbb N)} && {F_E \circ D_\bullet(W, \mathbb N)} \\
	{\T_{\mathbb X}(W^2,F_E(\mathbb N))} && {\T_{\mathbb X}(W,F_E(\mathbb N))}
	\arrow["{F_E\circ D_\bullet(+, 1_\mathbb N)}", from=1-1, to=1-3]
	\arrow["{(\hat \alpha_{W,\mathbb N})_2}"', from=1-1, to=2-1]
	\arrow["{\hat \alpha_{W,\mathbb N}}", from=1-3, to=2-3]
	\arrow["{\Te(+,1_{F_E(\mathbb N)})}"', from=2-1, to=2-3]
\end{tikzcd}\]
Because $\hat \alpha_{W,\mathbb N} = T(\sigma) \circ (0 \times_{TM} \lambda)$, the map $(\hat \alpha_{W,\mathbb N})_2 : E_3 \to T_2E$ is given by
$$
(\hat \alpha_{W,\mathbb N})_2 = \langle T(\sigma) \circ \langle 0 \circ \pi_0 , \lambda \circ \pi_1 \rangle , T(\sigma) \circ \langle 0 \circ \pi_0 , \lambda \circ \pi_2 \rangle \rangle .
$$
The composition along the left path gives
$$
+ \circ \langle T(\sigma ) \circ \langle 0 \circ \pi_0 , \lambda \circ \pi_1 \rangle , T( \sigma) \circ \langle 0 \circ \pi_0 , \lambda \circ \pi_2 \rangle \rangle.
$$
By naturality of $+$ this equals
\begin{align*}
&T(\sigma) \circ +_{E_2} \circ \langle \langle 0 \circ \pi_0 , \lambda \circ \pi_1 \rangle , \langle 0 \circ \pi_0 , \lambda \circ \pi_2 \rangle \rangle
\\=&
T(\sigma) \circ \langle + \circ \langle 0 \circ \pi_0 , 0 \circ \pi_0 \rangle , + \circ \langle \lambda \circ \pi_1, \lambda \circ \pi_2 \rangle \rangle.
\end{align*}
Since $0$ is the zero with respect to the addition $+$ and $(\lambda, \zeta)$ is an additive bundle morphism, this equals
$$
T(\sigma) \circ \langle 0 \circ \pi_0 , \lambda \circ \sigma \circ \langle \pi_1 , \pi_2 \rangle \rangle = T(\sigma) \circ (0 \times_{TM} \lambda) \circ (1_E \times_M \sigma)
$$
which is the composition along the right path in the square.

Naturality with respect to $\ell: W \to W \otimes W $ amounts to the following square:
\[\begin{tikzcd}
	{F_E \circ D_\bullet(W,\mathbb N)} && {F_E \circ D_\bullet(W \otimes W, \mathbb N)} \\
	{\T_{\mathbb X}(W,F_E(\mathbb N))} && {\T_{\mathbb X}(W \otimes W,F_E(\mathbb N))}
	\arrow["{F_E\circ D_\bullet(\ell, 1_\mathbb N)}", from=1-1, to=1-3]
	\arrow["{\hat \alpha_{W,\mathbb N}}"', from=1-1, to=2-1]
	\arrow["{\Te (1_W,\hat \alpha_{W,\mathbb N}) \circ \hat \alpha_{W, \mathbb N^2}}", from=1-3, to=2-3]
	\arrow["{\Te(\ell,1_{F_E(\mathbb N)})}"', from=2-1, to=2-3]
\end{tikzcd}\]
The composition along the right path gives
$$
T^2(\sigma) \circ (T(0) \times_{T^2M} T(\lambda)) \circ (T(\sigma) \circ (0 \times_{TM} \lambda) \times_{TM} T(\sigma) \circ (0 \times_{TM} \lambda)) \circ \langle \pi_0 , \zeta \circ q \circ \pi_0 , \zeta \circ q \circ \pi_1 , \pi_1 \rangle 
$$
which equals
$$
T^2( \sigma) \circ \langle T(0) \circ T(\sigma) \circ \langle 0 \circ \pi_0 , \lambda \circ \zeta \circ q \circ \pi_0 \rangle , T(\lambda) \circ T(\sigma) \circ \langle 0 \circ \zeta \circ q \circ \pi_1, \lambda \circ \pi_1 \rangle \rangle.
$$
In the last part of the composition we used the fact that $$
\zeta \circ q \circ \pi_1 = \zeta \circ q \circ \pi_0: E_2 \to E.
$$
Due to the additive bundle morphism properties $\lambda \circ \zeta = 0 \circ \zeta = T(\zeta) \circ 0$ and therefore the right path gives
$$
T^2(\sigma) \circ \langle T(0) \circ T(\sigma) \circ \langle 0 \circ \pi_0 , T(\zeta) \circ 0 \circ q \circ \pi_0 \rangle , T( \lambda) \circ T( \sigma) \circ \langle T( \zeta) \circ 0 \circ \pi_1 , \lambda \circ \pi_1 \rangle \rangle .
$$
Since $\zeta$ is the zero of the addition $\sigma$ this equals
$$
T^2(\sigma) \circ \langle T(0) \circ 0 \circ \pi_0 , T( \lambda) \circ \lambda \circ \pi_1 \rangle = \ell \circ T(\sigma) \circ \langle 0 \circ \pi_0 , \lambda \circ \pi_1 \rangle 
$$
which is the composition along the left path.

Naturality with respect to $c: W \otimes W \to W \otimes W$ amounts to the following square:
\[\begin{tikzcd}
	{F_E \circ D_\bullet(W \otimes W,\mathbb N)} && {F_E \circ D_\bullet(W \otimes W, \mathbb N)} \\
	{\T_{\mathbb X}(W \otimes W,F_E(\mathbb N))} && {\T_{\mathbb X}(W \otimes W,F_E(\mathbb N))}
	\arrow["{F_E\circ D_\bullet(c, 1_\mathbb N)}", from=1-1, to=1-3]
	\arrow["{\Te (1_W,\hat \alpha_{W,\mathbb N}) \circ \hat \alpha_{W, \mathbb N^2}}"', from=1-1, to=2-1]
	\arrow["{\Te (1_W,\hat \alpha_{W,\mathbb N}) \circ \hat \alpha_{W, \mathbb N^2}}", from=1-3, to=2-3]
	\arrow["{\Te(c,1_{F_E(\mathbb N)})}"', from=2-1, to=2-3]
\end{tikzcd}\]
The composition along the left path gives
\begin{align*}
&c \circ T^2(\sigma) \circ (T(0) \times_{T^2M} T(\lambda)) \circ (T(\sigma) \circ (0 \times_{TM} \lambda) \times_{TM} T(\sigma) \circ (0 \times_{TM} \lambda))
\\=&
c \circ T^2( \sigma) \circ \langle T(0) \circ T(\sigma) \circ \langle 0 \circ \pi_0 , \lambda \circ \pi_1 \rangle , T(\lambda) \circ T( \sigma ) \circ \langle 0 \circ \pi_2 , \lambda \circ \pi_3 \rangle \rangle .
\end{align*}
Due to naturality of the canonical flip of the tangent structure $c: T^2 \Rightarrow T^2$, since $c \circ T(0_E) = 0_{TE}$ and since $(\lambda, \zeta)$ is an additive bundle morphism this equals to
$$
T^2(\sigma) \circ \langle 0_{TE} \circ T(\sigma) \circ \langle 0 \circ \pi_0 , \lambda \circ \pi_1 \rangle , c \circ T^2(\sigma) \circ \langle T(\lambda) \circ 0 \circ \pi_2 , T(\lambda) \circ \lambda \circ \pi_3 \rangle \rangle .
$$
By naturality of $c$ and since $c \circ T(\lambda ) \circ \lambda = c \circ \ell \circ \lambda = \ell \circ \lambda = T(\lambda) \circ \lambda$ this equals to
$$
T^2(\sigma) \circ \langle T^2(\sigma) \circ \langle 0_{TE} \circ 0 \circ \pi_0 , 0_{TE} \circ \lambda \circ \pi_1 \rangle , T^2( \sigma) \circ \langle c \circ T(\lambda) \circ 0 \circ \pi_2 , T( \lambda) \circ \lambda \circ \pi_3 \rangle \rangle.
$$
By naturality of zero and since $c\circ 0_{TE} = T(0_E)$ this equals to
$$
T^2(\sigma) \circ \langle T^2(\sigma) \circ \langle T(0) \circ 0 \circ \pi_0, T(\lambda) \circ 0 \circ \pi_1 \rangle , T^2(\sigma) \circ \langle
T(0) \circ \lambda \circ \pi_2 , T(\lambda) \circ \lambda \circ \pi_3
\rangle \rangle.
$$
Now using associativity and commutativity of $\sigma$ we can rearrange the terms to obtain
$$
T^2(\sigma) \circ \langle T^2(\sigma) \circ \langle T(0) \circ 0 \circ \pi_0 , T(0) \circ \lambda \circ \pi_2 \rangle , T^2(\sigma) \circ \langle T(\lambda) \circ 0 \circ \pi_1 , T(\lambda) \circ \lambda \circ \pi_3 \rangle \rangle .
$$
By naturality of 0 and since $(\lambda, 0)$ is an additive bundle morphism, this equals to
\begin{align*}
&T^2(\sigma) \circ \langle T(0) \circ T(\sigma) \circ \langle 0 \circ \pi_0 , \lambda \circ \pi_2 \rangle , T(\lambda) \circ T(\sigma) \circ \langle 0 \circ \pi_1 , \lambda \circ \pi_3 \rangle \rangle
\\=&
T^2(\sigma) \circ ( T(0) \times_{T^2M} T(\lambda)) \circ 
(
T(\sigma) \circ (0 \times_{TM} \lambda) \times_{TM} T(\sigma) \circ (0 \times_{TM} \lambda)
)
\circ \langle \pi_0, \pi_2 , \pi_1 , \pi_3 \rangle
\end{align*}
which is the composition along the right path in the square.

This shows that $\hat \alpha$ is natural with respect to the building blocks $p,+,0, \ell, c$ and projections out of products. In order to show that $\hat \alpha$ is natural with respect to all morphisms in $\weil$, we show next that $\hat \alpha$ is compatible with coproducts and pullbacks of the form $A \otimes (B \times C)$ from Lemma \ref{lemma:Leung_on_weil-morphs} (which include products).

For tensor products $f_1 \otimes f_2: A \otimes B \to A' \otimes B'$ in \weil, $\hat \alpha$ is natural since $\hat \alpha_{A \otimes B}$ is defined as $\T_A (\hat \alpha_{B}) \circ \hat \alpha_{A}$.

For pullbacks of the form $A \otimes (B\times C)$, suppose $\hat \alpha_{-,\mathbb N^k}$ is natural with respect to the $\weil$-morphisms $f_1: K \to A \otimes B$ and $f_2: K \to A \otimes C$. 

From the tangent category axioms, the following diagram is a pullback in the tangent category $(\mathbb X,\Te)$:
\[\begin{tikzcd}
	{\T_A \circ \T_{B \times C} \circ F_E} && {\T_A \circ \T_{C} \circ F_E} \\
	{\T_A \circ \T_{B} \circ F_E} && {\T_A \circ F_E}
	\arrow["{\T_A \circ \T_{\pi_1} \circ F_E }", from=1-1, to=1-3]
	\arrow["{\T_A \circ \T_{\pi_0} \circ F_E }"', from=1-1, to=2-1]
	\arrow["{\T_A \circ \T_! \circ F_E }", from=1-3, to=2-3]
	\arrow[""{name=0, anchor=center, inner sep=0}, "{\T_A \hat \circ \T_! \circ F }"', from=2-1, to=2-3]
	\arrow["\lrcorner"{anchor=center, pos=0.125}, draw=none, from=1-1, to=0]
\end{tikzcd}\]
Therefore, due to the universal property of the pullback, the naturality diagram 
\[\begin{tikzcd}[column sep=large]
	{F_E \circ D_K} &&& {F_E \circ D_{A \otimes (B \times C)}} \\
	{T_K \circ F_E} &&& {T_{A \otimes (B \times C)} \circ F_E}
	\arrow["{F_E \circ D_{\langle f_1, f_2\rangle}=F_E \circ \langle D_{f_1} , D_{f_2}\rangle}", from=1-1, to=1-4]
	\arrow["{\hat \alpha_K}"', from=1-1, to=2-1]
	\arrow["{\hat \alpha_{A \otimes (B \times C)}}", from=1-4, to=2-4]
	\arrow["{T_{\langle f_1, f_2\rangle}\circ F_E = \langle T_{f_1}, T_{f_2}\rangle \circ F_E}"', from=2-1, to=2-4]
\end{tikzcd}\]
commutes if and only if the naturality diagrams
\[\begin{tikzcd}
	{F_E \circ D_K} && {F_E \circ D_{A \otimes B}} && {F_E \circ D_K} && {F_E \circ D_{A \otimes C}} \\
	{T_K \circ F_E} && {T_{A \otimes B} \circ F_E} && {T_K \circ F_E} && {T_{A \otimes C} \circ F_E}
	\arrow["{F_E \circ D_{ f_1}}", from=1-1, to=1-3]
	\arrow["{\hat \alpha_K}"', from=1-1, to=2-1]
	\arrow["{\hat \alpha_{A \otimes B}}", from=1-3, to=2-3]
	\arrow["{F_E \circ D_{ f_2}}"', from=1-5, to=1-7]
	\arrow["{\hat \alpha_K}"', from=1-5, to=2-5]
	\arrow["{\hat \alpha_{A \otimes C}}", from=1-7, to=2-7]
	\arrow["{T_{f_1}\circ F_E }"', from=2-1, to=2-3]
	\arrow["{T_{f_2}\circ F_E }", from=2-5, to=2-7]
\end{tikzcd}\]
for the components $f_1$ and $f_2$ commute (which we assumed).

This concludes the proof that $\hat \alpha$ is natural in $\weil$.
Thus $\hat \alpha$ is natural in $\mathbb N^\bullet$ and in \weil{} and therefore natural in $\mathbb N^\bullet \times \weil$.
\end{proof}

\bibliography{sample}

\end{document}